\newcommand{\R}{\mathbb{R}}
\newcommand{\Q}{\mathbb{Q}}
\newcommand{\C}{\mathbb{C}}
\newcommand{\Z}{\mathbb{Z}}
\newcommand{\im}{\operatorname{Im}}
\newcommand{\group}[1]{\mathit{#1}}
\newcommand{\GLTZ}{\group{GL}_2(\Z)}
\newcommand{\PSLTZ}{\group{PSL}_2(\Z)}
\newcommand{\GLTR}{\group{GL}_2(\R)}
\newcommand{\PGLTR}{\group{PGL}_2(\R)}
\newcommand{\SLTR}{\group{SL}_2(\R)}
\newcommand{\PSLTR}{\group{PSL}_2(\R)}
\newcommand{\Id}{\mathit{Id}}
\newcommand{\Isom}{\group{Isom}}
\theoremstyle{plain}
\newtheorem*{theorem}{Theorem}
\newtheorem{proposition}{Proposition}[section]
\newtheorem{lemma}[proposition]{Lemma}
\theoremstyle{definition}
\newtheorem{problem}[proposition]{Problem}
\newtheorem{remark}[proposition]{Remark}
\title{The hyperbolic geometry of Markov's theorem on Diophantine
  approximation and quadratic forms}
\author{Boris Springborn}
\date{}
\begin{document}

\maketitle

\begin{abstract}
  Markov's theorem classifies the worst irrational numbers with
  respect to rational approximation and the indefinite binary
  quadratic forms whose values for integer arguments stay farthest
  away from zero. The main purpose of this paper is to present a new
  proof of Markov's theorem using hyperbolic geometry. The main
  ingredients are a dictionary to translate between hyperbolic
  geometry and algebra/number theory, and some very basic tools
  borrowed from modern geometric Teichm{\"u}ller theory. Simple closed
  geodesics and ideal triangulations of the modular torus play an
  important role, and so do the problems: How far can a straight line
  crossing a triangle stay away from the vertices? How far can it stay
  away from the vertices of the tessellation generated by the
  triangle? Definite binary quadratic forms are briefly discussed in
  the last section.

  \vspace{0.5\baselineskip}\noindent%
  \emph{MSC (2010).} 11J06, 32G15\\
  \emph{Key words and phrases.} Diophantine approximation, 
  quadratic form,
  modular torus,
  closed geodesic
\end{abstract}

\section{Introduction}
\label{sec:intro}

The main purpose of this article is to present a new proof of Markov's
theorem~\cite{markoff79, markoff80} (Secs.~\ref{sec:mostirrational},
\ref{sec:forms}) using hyperbolic geometry. Roughly, the following
dictionary is used to translate between hyperbolic geometry and
algebra/number theory:

\medskip
\begin{center}
  \small
  \newlength{\widthone}
  \newlength{\widthtwo}
  \newlength{\widththree}
  \settowidth{\widthone}{signed distance between horocycles}
  \settowidth{\widthtwo}{indefinite binary quadratic form $f$}
  \settowidth{\widththree}{Sec.~\ref{sec:point}}
  \renewcommand{\arraystretch}{1.5}
  \begin{tabular}{m{\widthone}|m{\widthtwo}|m{\widththree}}
    \hfill Hyperbolic Geometry \hspace*{\fill} 
    & \hfill Algebra/Number Theory\hspace*{\fill} 
    & 
    \\
    \hline
    horocycle 
    & nonzero vector $(p,q)\in\R^{2}$
    & Sec.~\ref{sec:horo}
    \\
    geodesic 
    & indefinite binary quadratic form $f$
    & Sec.~\ref{sec:geodesics}
    \\
    point 
    & definite binary quadratic form $f$
    & Sec.~\ref{sec:point}
    \\
    signed distance between horocycles
    & $2\log
      \Big|\det\Big(
      \begin{smallmatrix}
        p_{1} & p_{2}\\
        q_{1} & q_{2}
      \end{smallmatrix}
                \Big)\Big|$
    & \eqref{eq:dhh}
    \\
    signed distance between horocycle and geodesic/point
    & $\displaystyle\log\frac{f(p,q)}{\sqrt{|\det f|}}$
    & \eqref{eq:dhgf} \eqref{eq:dhzf}
    \\
    ideal triangulation of the modular torus
    & Markov triple
    & Sec.~\ref{sec:triangulations}
  \end{tabular}
\end{center}
\medskip

The proof is based on Penner's geometric interpretation of Markov's
equation~\cite[p.~335f]{penner87} (Sec.~\ref{sec:triangulations}), and
the main tools are borrowed from his theory of decorated
Teichm{\"u}ller space (Sec.~\ref{sec:ideal_triang}). Ultimately, the
proof of Markov's theorem boils down to the question:

\begin{quotation}\noindent
  How far can a straight line crossing a triangle stay away from all
  vertices?
\end{quotation}

It is fun and a recommended exercise to consider this question in
elementary euclidean geometry. Here, we need to deal with ideal
hyperbolic triangles, decorated with horocycles at the vertices, and
``distance from the vertices'' is to be understood as ``signed
distance from the horocycles'' (Sec.~\ref{sec:crossing}).

The subjects of this article, Diophantine approximation, quadratic
forms, and the hyperbolic geometry of numbers, are connected with
diverse areas of mathematics and its applications, ranging from from
the phyllotaxis of plants~\cite{conway96} to the stability of the
solar system~\cite{hubbard07}, and from Gauss' \emph{Disquisitiones
  Arithmeticae} to Mirzakhani's Fields
Medal~\cite{mirzakhani08}. An adequate survey of this area, even if
limited to the most important and most recent contributions, would be
beyond the scope of this introduction. The books by
Aigner~\cite{aigner13} and Cassels~\cite{cassels57} are excellent
references for Markov's theorem, Bombieri~\cite{bombieri07} provides a
concise proof, and more about the Markov and Lagrange spectra can be
found in Malyshev's survey~\cite{maly77} and the book by Cusick and
Flahive~\cite{cusick89}. The following discussion focuses on a few
historic sources and the most immediate context and is far from
comprehensive.

One can distinguish two approaches to a geometric treatment of
continued fractions, Diophantine approximation, and quadratic
forms. In both cases, number theory is connected to geometry by a
common symmetry group, $\GLTZ$. The first approach, known as the
geometry of numbers and connected with the name of Minkowski, deals
with the geometry of the $\Z^{2}$-lattice. Klein interpreted continued
fraction approximation, intuitively speaking, as ``pulling a thread
tight'' around lattice points~\cite{klein95,klein96}. This approach
extends naturally to higher dimensions, leading to a multidimensional
generalization of continued fractions that was championed by
Arnold~\cite{arnold98,arnold01}. Delone's comments on Markov's
work~\cite{delone05} also belong in this category (see
also~\cite{fuchs07}).

In this article, we pursue the other approach involving Ford circles
and the Farey tessellation of the hyperbolic plane
(Fig.~\ref{fig:farey}). This approach could be called the hyperbolic
geometry of numbers. Before Ford's geometric proof~\cite{ford38} of
Hurwitz's theorem~\cite{hurwitz91} (Sec.~\ref{sec:mostirrational}),
Speiser had apparently used the Ford circles to prove a weaker
approximation theorem. However, only the following note survives of
his talk~\cite[my translation]{speiser23}:

\begin{quotation}
  \noindent\small
  \emph{A geometric figure related to number theory.} If one
  constructs in the upper half plane for every rational point of the
  $x$-axis with abscissa $\frac{p}{q}$ the circle of radius
  $\frac{1}{2q^{2}}$ that touches this point, then these circles
  do not overlap anywhere, only tangencies occur. The domains that are not
  covered consist of circular triangles. Following the line
  $x=\omega$ (irrational number) downward towards the $x$-axis, one
  intersects infinitely many circles, i.e., the inequality
  \begin{equation*}
    \big|\omega-\frac{p}{q}\big|<\frac{1}{2q^{2}}
  \end{equation*}
  has infinitely many solutions. They constitute the approximations by
  Min\-kow\-ski's continued fractions.

  If one increases the radii to $\frac{1}{\sqrt{3}q^2}$, then the gaps
  close and one obtains the theorem on the maximum of positive binary
  quadratic forms.
\end{quotation}

See Rem.~\ref{rem:vahlen} and Sec.~\ref{sec:point} for brief comments
on these theorems. Based on Speiser's talk,
Z{\"u}llig~\cite{zuellig28} developed a comprehensive geometric theory
of continued fractions, including a geometric proof of Hurwitz's
theorem.

Both Z{\"u}llig and Ford treat the arrangement of Ford circles using
elementary euclidean geometry and do not mention any connection with
hyperbolic geometry. In Sec.~\ref{sec:hurwitz}, we transfer their
proof of Hurwitz's theorem to hyperbolic geometry. The conceptual
advantage is obvious: One has to consider only three circles instead
of infinitely many, because all triples of pairwise touching
horocycles are congruent.

Today, the role of hyperbolic geometry is well understood. Continued
fraction expansions encode directions for navigating the Farey
tessellation of the hyperbolic plane~\cite{bonahon09,hatcher,
  series85a}. In fact, much was already known to
Hurwitz~\cite{hurwitz94} and Klein~\cite{klein90, klein96}. According
to Klein~\cite[p.~248]{klein96}, they built on
Hermite's~\cite{hermite51} purely algebraic discovery of an invariant
``incidence'' relation between definite and indefinite forms, which
they translated into the language of geometry. While Hurwitz and Klein
never mention horocycles, they knew the other entries of the
dictionary, and even use the Farey triangulation. In the Cayley--Klein
model of hyperbolic space, the geometric interpretation of binary
quadratic forms is easily established: The projectivized vector space
of real binary quadratic forms is a real projective plane and the
degenerate forms are a conic section. Definite forms correspond to
points inside this conic, hence to points of the hyperbolic plane,
while indefinite forms correspond to points outside, hence, by
polarity, to hyperbolic lines. From this geometric point of view,
Klein and Hurwitz discuss classical topics of number theory like the
reduction of binary quadratic forms, their automorphisms, and the role
of Pell's equation. Strangely, it seems they never treated Diophantine
approximation or Markov's work this way.

Cohn~\cite{cohn55} noticed that Markov's Diophantine
equation~\eqref{eq:markov} can easily be obtained from an elementary
identity of Fricke involving the traces of $2\times 2$-matrices. Based
on this algebraic coincidence, he developed a geometric interpretation
of Markov forms as simple closed geodesics in the modular
torus~\cite{cohn71,cohn72}, which is also adopted in this
article.

A much more geometric interpretation of Markov's equation was
discovered by Penner (as mentioned above), as a byproduct of his
decorated Teichm{\"u}ller theory~\cite{penner87,penner12}. This
interpretation focuses on ideal triangulations of the modular torus,
decorated with a horocycle at the cusp, and the weights of their edges
(Sec.~\ref{sec:triangulations}). Penner's interpretation also explains
the role of simple closed geodesics (Sec.~\ref{sec:arcs_geodesics}).

Markov's original proof (see~\cite{bombieri07} for a concise modern
exposition) is based on an analysis of continued fraction
expansions. Using the interpretation of continued fractions as
directions in the Farey tessellation mentioned above, one can
translate Markov's proof into the language of hyperbolic geometry. The
analysis of allowed and disallowed subsequences in an expansion
translates to symbolic dynamics of geodesics~\cite{series85}.

In his 1953 thesis, which was published much later,
Gorshkov~\cite{gorshkov77} provided a genuinely new proof of Markov's
theorem using hyperbolic geometry. It is based on two important ideas
that are also the foundation for the proof presented here. First,
Gorshkov realized that one should consider all ideal triangulations of
the modular torus, not only the projected Farey tessellation. This
reduces the symbolic dynamics argument to almost nothing (in this
article, see Proposition~\ref{prop:markov_forms_geo}, the proof of
implication ``$\text{(c)}\Rightarrow\text{(a)}$''). Second, he
understood that Markov's theorem is about the distance of a geodesic
to the vertices of a triangulation. However, lacking modern geometric
tools of Teichm{\"u}ller theory (like horocycles), Gorshkov was not
able to treat the geometry of ideal triangulations directly. Instead,
he considers compact tori composed of two equilateral hyperbolic
triangles and lets the side length tend to infinity. The compact tori
have a cone-like singularity at the vertex, and the developing map
from the punctured torus to the hyperbolic plane has infinitely many
sheets. This limiting process complicates the argument
considerably. Also, the trigonometry becomes simpler when one needs to
consider only decorated ideal triangles. Gorshkov's decision ``not to
restrict the exposition to the minimum necessary for proving Markov's
theorem but rather to execute it with considerable completeness,
retaining everything that is of independent interest'' makes it harder
to recognize the main lines of argument. This, together with an unduly
dismissive MathSciNet review, may account for the lack of recognition
his work received.

In this article, we adopt the opposite strategy and stick to proving
Markov's theorem. Many natural generalizations and related topics are
beyond the scope of this paper, for example the approximation of
complex numbers~\cite{dani14,ford18,ford25,schmidt75}, generalizations
to other Riemann surfaces or discrete
groups~\cite{abe13,beardon86,bowditch98,haas86,lehner84,schmidt76,schmidt77},
higher dimensional manifolds~\cite{hersonsky02,vulakh99}, other
Diophantine approximation theorems, for example
Khinchin's~\cite{sullivan82}, and the asymptotic growth of Markov
numbers and lengths of closed
geodesics~\cite{bowditch96,mcshane91,mcshane95,spalding16,spalding17,zagier82}. Is
the treatment of Markov's equation using
$3\times 3$-matrices~\cite{perrine09,riedel12} related? Do the methods
presented here help to cover a larger part of the Markov and Lagrange
spectra by considering more complicated
geodesics~\cite{crisp98,crisp93a,crisp93b}?  Can one treat, say,
ternary quadratic forms or binary cubic forms in a similar fashion?

The notorious Uniqueness Conjecture for Markov numbers
(Rem.~\ref{rem:markov} (iv)), which goes back to a neutral statement
by Frobenius~\cite[p.~461]{frobenius13}, says in geometric terms: If
two simple closed geodesics in the modular torus have the same length,
then they are related by an isometry of the modular
torus~\cite{schmutz98}. Equivalently, if two ideal arcs have the same
weight, they are related this way. Hyperbolic geometry was
instrumental in proving the uniqueness conjecture for Markov numbers
that are prime powers~\cite{button98,lang07,schmutz96}. Will geometry
also help to settle the full Uniqueness Conjecture, or is it ``a
conjecture in pure number theory and not tractable by hyperbolic
geometry arguments''~\cite{mcshane08}?  Will combinatorial methods
succeed? Who knows. These may not even be very meaningful questions,
like asking: ``Will a proof be easier in English, French, Russian, or
German?'' On the other hand, sometimes it helps to speak more than one
language.

\section{The worst irrational numbers}
\label{sec:mostirrational}

There are two versions of Markov's theorem. One deals with Diophantine
approximation, the other with quadratic forms. In this section, we
recall some related theorems and state the Diophantine approximation
version in the form in which we will prove it (Sec.~\ref{sec:markovproof}).
The following section is about the quadratic forms version.

Let $x$ be an irrational number. For every positive integer $q$ there
is obviously a fraction $\frac{p}{q}$ that approximates $x$ with error
less than $\smash[b]{\frac{1}{2q}}$. If one chooses denominators more
carefully, one can find a sequence of fractions converging to $x$ with
error bounded by $\smash[t]{\frac{1}{q^{2}}}$:

\begin{theorem}
  For every irrational number $x$, there are infinitely many
  fractions $\frac{p}{q}$ satisfying 
  \begin{equation*}
    \Big|x-\frac{p}{q}\Big|<\frac{1}{q^{2}}\,.
  \end{equation*}
\end{theorem}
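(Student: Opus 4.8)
The plan is to use the pigeonhole principle --- this is, after all, \emph{Dirichlet's} theorem --- to produce one good approximation at every scale, and then to exploit the irrationality of $x$ to promote a single solution to an infinite family of genuinely distinct ones.

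\textbf{Step 1: one approximation per scale.} Fix an integer $N \ge 1$. The $N+1$ fractional parts $0, \{x\}, \{2x\}, \dots, \{Nx\}$ all lie in $[0,1)$, which is the union of the $N$ half-open intervals $[\frac jN, \frac{j+1}{N})$ for $j = 0, \dots, N-1$. By pigeonhole, two of these fractional parts lie in the same subinterval, so there are integers $0 \le a < b \le N$ with $|\{bx\} - \{ax\}| < \frac1N$. Setting $q = b-a$ and $p = \lfloor bx\rfloor - \lfloor ax\rfloor$, we obtain an integer $q$ with $1 \le q \le N$ and $|qx - p| = |\{bx\} - \{ax\}| < \frac1N$, hence
\begin{equation*}
  \Big|\, x - \frac pq \,\Big| < \frac{1}{qN} \le \frac{1}{q^{2}},
\end{equation*}
the last inequality because $q \le N$.

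\textbf{Step 2: infinitely many.} Suppose $\frac pq$ satisfies $|x - \frac pq| < \frac1{q^2}$, and set $\varepsilon := |x - \frac pq|$; since $x$ is irrational, $\varepsilon > 0$. Applying Step~1 with any integer $N > 1/\varepsilon$ produces a fraction $\frac{p'}{q'}$ with $|x - \frac{p'}{q'}| < \frac1{q'N} \le \frac1N < \varepsilon = |x - \frac pq|$, so $\frac{p'}{q'}$ is a strictly better --- in particular a different --- approximation. Iterating, starting from any solution furnished by Step~1, yields an infinite sequence of pairwise distinct fractions, all obeying the required inequality.

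The proof is short and self-contained, so there is no serious obstacle; the only delicate point is the passage from one solution to infinitely many, and it is exactly here that irrationality enters essentially --- for rational $x$ the error $\varepsilon$ can vanish and the iteration stalls. Two sharper statements, to be revisited geometrically later, are worth noting: the continued-fraction convergents $\frac{p_n}{q_n}$ of $x$ satisfy the stronger bound $|x - \frac{p_n}{q_n}| < \frac1{q_n q_{n+1}}$; and in the upper half-plane picture of Section~\ref{sec:hurwitz}, the inequality $|x - \frac pq| < \frac1{q^2}$ says that the vertical geodesic ending at $x$ stays within Euclidean distance $\frac1{q^2}$ of the boundary point $\frac pq$ --- a coarse forerunner of the Ford-circle proof of Hurwitz's theorem.
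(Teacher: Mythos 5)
The paper does not actually prove Dirichlet's approximation theorem; it states it as background and only remarks in passing that Dirichlet proved a multidimensional generalization via the pigeonhole principle. Your proof is the standard pigeonhole argument and it is correct and complete. Step~1 is exactly Dirichlet's box argument (the computation $qx-p = \{bx\}-\{ax\}$ is right, and $q\le N$ gives $\frac{1}{qN}\le\frac{1}{q^2}$), and Step~2's bootstrap --- using irrationality to ensure $\varepsilon=|x-\tfrac pq|>0$ and then taking $N>1/\varepsilon$ to force a strictly smaller error, hence a genuinely new fraction --- is precisely the right way to pass from one approximation per scale to infinitely many. You also correctly identify irrationality as the point where the iteration would otherwise stall.

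One small imprecision in your closing aside (not part of the argument itself): the inequality $|x-\tfrac pq|<\tfrac1{q^2}$ does not say the vertical geodesic at $x$ ``stays within Euclidean distance $\tfrac1{q^2}$ of $\tfrac pq$''; the clean geometric translation, which the paper records in Proposition~\ref{prop:dhv} via equation~\eqref{eq:dhv}, is that it is equivalent to $d\bigl(h(p,q),g\bigr)<\log 2$, i.e.\ the vertical geodesic $g$ at $x$ enters the horocycle at $\tfrac pq$ of Euclidean diameter $\tfrac{2}{q^2}$ (twice the Ford circle). This is cosmetic and does not affect the proof.
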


This theorem is sometimes attributed to Dirichlet although the
statement had ``long been known from the theory of continued
fractions''~\cite{dirichlet42}. In fact, Dirichlet provided a
particularly simple proof of a multidimensional generalization, using
what later became known as the pigeonhole principle.

Klaus Roth was awarded a Fields Medal in 1958 for showing that the
exponent $2$ in Dirichlet's approximation theorem is optimal~\cite{roth55}: 

\begin{theorem}[Roth]
  Suppose $x$ and $\alpha$ are real numbers, $\alpha>2$. If there are infinitely
  many reduced fractions $\frac{p}{q}$ satisfying
  \begin{equation*}
    \Big|x-\frac{p}{q}\Big|<\frac{1}{q^{\alpha}}\,,
  \end{equation*}
  then $x$ is transcendental.
\end{theorem}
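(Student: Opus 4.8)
The plan is to establish the contrapositive in its sharpest form: if $x$ is a real algebraic number of degree $d$ over $\Q$ and $\alpha>2$, then $|x-\frac{p}{q}|<q^{-\alpha}$ has only finitely many solutions in reduced fractions $\frac{p}{q}$. This is the Thue--Siegel--Roth theorem, and the argument I would follow is the classical one via an auxiliary polynomial in many variables. It is entirely independent of the hyperbolic machinery of this paper, being recalled here only for context.

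Assume for contradiction that there are infinitely many good approximations. First I would fix a large integer $m$ and a small $\varepsilon>0$, both depending on $d$ and $\alpha$, and then choose $m$ solutions $\frac{p_1}{q_1},\dots,\frac{p_m}{q_m}$ with $q_1$ huge and denominators growing very fast, $q_{k+1}$ enormous compared with $q_k$. The core construction, by a pigeonhole count (Siegel's lemma), produces a nonzero polynomial $P(X_1,\dots,X_m)\in\Z[X_1,\dots,X_m]$ of prescribed multidegree $(r_1,\dots,r_m)$ with $r_{k+1}/r_k$ small, with integer coefficients of controlled size, vanishing at the algebraic point $(x,\dots,x)$ to large \emph{index}: every normalized partial derivative $P_{\mathbf j}=\frac{1}{j_1!\cdots j_m!}\,\partial^{\mathbf j}P$ with $\sum_k j_k/r_k$ below a fixed threshold vanishes at $(x,\dots,x)$. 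One checks that the number of coefficients of $P$ exceeds the number of linear conditions imposed, so such a $P$ exists.

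Then come two competing estimates for the index of $P$ at the \emph{rational} point $(\frac{p_1}{q_1},\dots,\frac{p_m}{q_m})$. A Taylor expansion about $(x,\dots,x)$, using that $|\frac{p_k}{q_k}-x|<q_k^{-\alpha}$ and that $P$ vanishes to high order there, forces some low-order $P_{\mathbf i}$ to take at the rational point a value that is a nonzero rational number with denominator dividing $\prod_k q_k^{r_k}$ but of absolute value much smaller than the reciprocal of that denominator --- impossible, \emph{unless} $P$ in fact vanishes at the rational point to index above a definite bound. Against this, \emph{Roth's lemma} --- proved by induction on $m$ using generalized Wronskians and a factorization isolating the dependence of $P$ on $X_m$ --- supplies an \emph{upper} bound on that same index which can be made as small as one wishes by shrinking the ratios $r_{k+1}/r_k$ and letting the $q_k$ grow fast enough. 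Choosing the parameters so the Taylor lower bound exceeds the Roth upper bound yields the contradiction, and hence finiteness.

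The main obstacle is Roth's lemma itself: bounding how strongly a polynomial of bounded multidegree can vanish at a point whose coordinate-denominators grow rapidly is the delicate step, and the induction on the number of variables via Wronskians is precisely Roth's contribution beyond Thue and Siegel. It is worth flagging that, unlike the proofs of Hurwitz's and Markov's theorems given in later sections, this argument is \emph{ineffective}: it bounds the number of exceptional fractions but yields no computable bound on their size.
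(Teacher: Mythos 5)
The paper does not prove Roth's theorem; it merely states it, with a citation to \cite{roth55}, as background motivation explaining why the ``worst'' irrational numbers from the point of view of approximation must be algebraic, and (by Markov) quadratic. So there is no in-paper argument to compare your proposal against, and you correctly signal this yourself by noting that your sketch is ``entirely independent of the hyperbolic machinery of this paper.'' As a high-level outline of the classical Thue--Siegel--Roth proof, your sketch is accurate: constructing an auxiliary polynomial $P\in\Z[X_1,\dots,X_m]$ of controlled multidegree and height via Siegel's lemma, forcing it to vanish at $(x,\dots,x)$ to large index, then playing off a Taylor-expansion lower bound for the index of $P$ at the rational point $\bigl(\frac{p_1}{q_1},\dots,\frac{p_m}{q_m}\bigr)$ against the upper bound supplied by Roth's lemma (proved by Wronskian induction on $m$), with parameters $m$, $\varepsilon$, the degree ratios $r_{k+1}/r_k$, and the gaps between the $q_k$ chosen so that the two bounds collide. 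Your closing remark that the argument is ineffective, in contrast to the fully explicit proofs of Hurwitz's and Markov's theorems given later in the paper, is a pertinent observation. The only caveat is the obvious one: this is a sketch, and essentially all of the real work in Roth's proof---the combinatorial bookkeeping in Siegel's lemma, the generalized-Wronskian induction and the factorization in Roth's lemma, and the final tuning of parameters---is compressed into a sentence each; but as an outline of the standard route it is correct and correctly attributed.
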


In other words, if the exponent in the error bound is greater than $2$
then algebraic irrational numbers cannot be approximated. This is an
example of a general observation: ``From the point of view of rational
approximation, \emph{the simplest numbers are the worst}'' (Hardy \&
Wright~\cite{hardy08}, p.~209, their emphasis). Roth's theorem shows
that the worst irrational numbers are algebraic. Markov's theorem,
which we will state shortly, shows that the worst algebraic
irrationals are quadratic.

While the exponent is optimal, the constant factor in Dirichlet's
approximation theorem can be improved. Hurwitz~\cite{hurwitz91}
showed that the optimal constant is $\frac{1}{\sqrt{5}}$, and that the
golden ratio belongs to the class of very worst irrational numbers:

\begin{theorem}[Hurwitz]
  \label{thm:hurwitz}
  (i) For every irrational number $x$, there are infinitely many
  fractions $\frac{p}{q}$ satisfying
  \begin{equation}
    \label{eq:hurwitz}
    \Big|x-\frac{p}{q}\Big| < \frac{1}{\sqrt{5}\,q^{2}}\,.
  \end{equation}
  (ii) If $\lambda>\sqrt{5}$, and if $x$ is equivalent to the golden
  ratio $\phi=\frac{1}{2}(1+\sqrt{5})$, then there are only finitely
  many fractions~$\frac{p}{q}$ satisfying
  \begin{equation}
    \label{eq:lambda}
    \Big|x-\frac{p}{q}\Big| < \frac{1}{\lambda\,q^{2}}\,.
  \end{equation}
\end{theorem}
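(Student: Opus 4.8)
The plan is to carry out the hyperbolic reformulation announced in the introduction. The Ford circles --- the horocycles $h_{p/q}$ of euclidean diameter $1/q^{2}$ resting on the reduced fraction $p/q$, together with the horocycle $y=1$ at $\infty$ --- are exactly the horocycles centred at the cusps of the Farey tessellation of the upper half-plane, and $\PSLTZ$ permutes them. For irrational $\omega$ let $\gamma_{\omega}$ be the vertical geodesic from $\omega$ to $\infty$. Moving $p/q$ to $\infty$ by an element of $\PSLTZ$ and comparing euclidean radii and heights, one checks that $\gamma_{\omega}$ meets the horoball bounded by $h_{p/q}$ exactly when $|\omega-p/q|<\frac{1}{2q^{2}}$, and that the penetration depth $\delta\ge 0$ into that horoball then satisfies $e^{\delta}=\frac{1}{2q^{2}\,|\omega-p/q|}$ (the horocycle and $\gamma_{\omega}$ being tangent when $\delta=0$). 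Hence $|\omega-p/q|<\frac{1}{\lambda q^{2}}$ is equivalent to ``$\gamma_{\omega}$ enters $h_{p/q}$ to depth more than $\log(\lambda/2)$'', and both parts of the theorem become assertions about how deeply $\gamma_{\omega}$ dips into the decorated Farey picture.

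The geometric heart is a \emph{crossing lemma}, the $\sqrt5$-case of the question in the introduction: \emph{if a complete geodesic meets the interior of an ideal triangle whose three vertices carry pairwise tangent horocycles, then it penetrates at least one of the three horoballs to depth at least $\log\frac{\sqrt5}{2}=\frac12\log\frac54$; equality holds only for the $\PSLTZ$-translates of the geodesic from $\phi$ to $\bar\phi=\frac12(1-\sqrt5)$, which enters all three horoballs to exactly that depth.} To prove it I would normalise the triangle to have vertices $0,1,\infty$ and horoballs $\{y\ge 1\}$, $\{|z-\tfrac{i}{2}|\le\tfrac12\}$, $\{|z-1-\tfrac{i}{2}|\le\tfrac12\}$; using the order-$3$ rotational symmetry of the decorated triangle one may assume the geodesic enters through the side $\{\operatorname{Re}z=1\}$ and leaves through $\{\operatorname{Re}z=0\}$, so it has real endpoints $a<0<1<b$. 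The three penetration depths then work out to $\log\frac{b-a}{2}$, $\log\frac{b-a}{-2ab}$ and $\log\frac{b-a}{2(1-a)(b-1)}$; minimising the maximum of these over $\{a<0<1<b\}$ forces the configuration onto $-ab=(1-a)(b-1)=1$, i.e.\ $a=\bar\phi$, $b=\phi$, with value $\log\frac{\sqrt5}{2}$, and a properness check (the maximum tends to $+\infty$ at every end of the region) confirms this is the minimum.

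Given the lemma, part (i) is almost immediate. Since $\omega$ is irrational, $\gamma_{\omega}$ crosses an infinite sequence of Farey triangles, all but one of which are not incident to $\infty$, and the denominators of their vertices are unbounded (distinct rationals converging to an irrational). Each such triangle supplies, via the lemma, a vertex $p/q$ with $|\omega-p/q|\le\frac{1}{\sqrt5\,q^{2}}$; as the denominators grow these are infinitely many distinct fractions, and the inequality can be made strict by the usual remark (equality would force $\omega\in\Q(\sqrt5)$, in which case the Fibonacci-type convergents produced by the same argument already satisfy it strictly for infinitely many indices). For part (ii), $\PSLTZ$-equivariance of the entire decorated picture reduces matters to $x=\phi$ (passing to an equivalent number multiplies the relevant $\limsup$ of $\frac{1}{q^{2}|x-p/q|}$ by a factor tending to $1$). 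The geodesic $\gamma_{\phi}$ is asymptotic, towards $\phi$, to the geodesic from $\phi$ to $\bar\phi$, which is invariant under the hyperbolic element $\bigl(\begin{smallmatrix}2&1\\1&1\end{smallmatrix}\bigr)\in\PSLTZ$; by the equality case of the lemma together with this periodicity that geodesic enters every Ford circle it meets to depth exactly $\log\frac{\sqrt5}{2}$, so $\gamma_{\phi}$ enters each Ford circle it meets to a depth converging to $\log\frac{\sqrt5}{2}$. For $\lambda>\sqrt5$ one has $\log(\lambda/2)>\log\frac{\sqrt5}{2}$, so only finitely many of these depths exceed $\log(\lambda/2)$; since any fraction with $|\phi-p/q|<\frac{1}{\lambda q^{2}}<\frac{1}{2q^{2}}$ must be a convergent, \eqref{eq:lambda} has only finitely many solutions.

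The step I expect to fight with is the sharp crossing lemma: setting up the three penetration depths correctly (the deepest dip into a given vertex's horoball typically happens in a neighbouring triangle, not in the one being crossed) and turning the elementary-looking minimax into a clean argument with the exact constant and the equality case. The rest --- that the relevant denominators tend to infinity, the handling of the triangle incident to $\infty$, the strictness refinement, and the reduction of part (ii) to $x=\phi$ --- is routine bookkeeping.
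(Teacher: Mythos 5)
Your proposal is correct and follows essentially the same route as the paper's Section~\ref{sec:hurwitz}: reformulate the Hurwitz inequality as a signed-distance statement about the vertical geodesic and the Ford circles (Proposition~\ref{prop:dhv}), prove a sharp crossing lemma for decorated Farey triangles with constant $\sqrt5/2$ and extremal geodesic from $\phi$ to $\bar\phi$ (Lemma~\ref{lem:hurwitz_farthest}), and for part~(ii) compare $\gamma_\phi$ with that extremal geodesic near their common end $\phi$. The only real difference is cosmetic: you carry out the crossing-lemma minimax explicitly in half-plane coordinates, whereas the paper defers the computation to its general Propositions~\ref{prop:crossing1} and~\ref{prop:crossing2}, and you invoke an $\omega\in\Q(\sqrt5)$ argument for strictness where the paper more cleanly observes that a vertical geodesic cannot bisect two sides of any Farey triangle since it ends at the vertex $\infty$.
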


Two real numbers $x$, $x'$ are called \emph{equivalent} if
\begin{equation}
  \label{eq:equiv_number}
  x'=\frac{ax+b}{cx+d},
\end{equation}
for some integers $a$, $b$, $c$, $d$ satisfying
\begin{equation*}
  |ad-bc|=1.
\end{equation*}
If infinitely many fractions satisfy~\eqref{eq:lambda} for some $x$,
then the same is true for any equivalent number $x'$. This follows
simply from the identity
\begin{equation*}
  (q')^{2}\,\Big|x'-\frac{p'}{q'}\Big| = q^{2}\,\Big|x-\frac{p}{q}\Big|\;
  \frac{\big|c\,\big(\frac{p}{q}\big)+d\big|}{\big|cx+d\big|},
\end{equation*}
where $x$ and $x'$ are related by \eqref{eq:equiv_number} and
$p'=ap+bq$, $q'=cp+dq$. (Note that the last factor on the right hand
side tends to $1$ as $\frac{p}{q}$ tends to $x$.)

Hurwitz also states the following results, ``whose proofs can easily
be obtained from Markov's investigation'' of indefinite quadratic forms:

$\bullet$ If~$x$ is an irrational number \emph{not} equivalent to the golden
ratio $\phi$, then infinitely many fractions
satisfy~\eqref{eq:lambda} with
$\lambda=2\sqrt{2}$. 

$\bullet$ For any $\lambda<3$, there are only finitely many
equivalence classes of numbers that cannot be approximated, i.e., for
which there are only finitely many fractions
satisfying~\eqref{eq:lambda}. But for $\lambda=3$, there
are infinitely many classes that cannot be approximated.

Hurwitz stops here, but the story continues. Table~\ref{tab:numbers}
lists representatives $x$ of the five worst classes of irrational
numbers, and the largest values $L(x)$ for $\lambda$ for which there
exist infinitely many fractions satisfying~\eqref{eq:lambda}. For
example, $\sqrt{2}$ belongs to the class of second worst irrational
numbers. The last two columns will be explained in the statement of
Markov's theorem.

\begin{table}[h]
  \centering
  \renewcommand{\arraystretch}{1.5}
  \begin{tabular}{c|
                  >{$}c<{$}|
                  >{$}r<{$}@{$\;=\;$}>{$}l<{$}|
                  >{$}c<{$}@{ }>{$}c<{$}@{ }>{$}c<{$}|
                  >{$}c<{$}@{ }>{$}c<{$}}
    rank & x & \multicolumn{2}{c|}{$L(x)$} & \ a\  & \ b\  & \ c\  & \ p_{1}\  &
                                                                       \ p_{2}\ \\
    \hline
    1
    &\frac{1}{2}(1+\sqrt{5})      
    & \sqrt{5} & 2.2\ldots   
    & 1 & 1 & 1
    & 0 & 1
    \\
    2
    &\sqrt{2}
    & 2\sqrt{2}
    & 2.8\ldots 
    & 1 & 1 & 2
    & -1 & 1
    \\
    3
    &\frac{1}{10}(9+\sqrt{221})
    & \frac{1}{5}\sqrt{221} & 2.97\ldots
    & 1 & 2 & 5
    & -1 & 2
    \\
    4
    &\frac{1}{26}(23+\sqrt{1517}) 
    & \frac{1}{13}\sqrt{1517} & 2.996\ldots
    & 1 & 5 & 13
    & -3 & 2
    \\
    5
    &\frac{1}{58}(5+\sqrt{7565})  
    & \frac{1}{29}\sqrt{7565} & 2.9992\ldots
    & 2 & 5 & 29
    & -7 & 3
  \end{tabular}
\caption{The five worst classes of irrational numbers}
\label{tab:numbers}
\end{table}

Markov's theorem establishes an explicit bijection between the
equivalence classes of the worst irrational numbers, and sorted Markov
triples. Here, \emph{worst irrational numbers} means precisely those
that cannot be approximated for some $\lambda<3$. A \emph{Markov
  triple} is a triple $(a,b,c)$ of positive integers satisfying
Markov's equation
\begin{equation}
  \label{eq:markov}
  a^{2}+b^{2}+c^{2}=3abc.
\end{equation}
A \emph{Markov number} is a number that appears in some Markov triple.
Any permutation of a Markov triple is also a Markov triple. A
\emph{sorted Markov triple} is a Markov triple $(a,b,c)$ with
$a\leq b\leq c$. 

We review some basic facts about Markov triples and refer to the
literature for details, for example~\cite{aigner13,cassels57}. First
and foremost, note that Markov's equation~\eqref{eq:markov} is
quadratic in each variable. This allows one to generate new solutions
from known ones: If $(a,b,c)$ is a Markov triple, then so are its
\emph{neighbors}
\begin{equation}
\label{eq:primed_triples}
(a',b,c),\quad 
(a,b',c),\quad
(a,b,c'), 
\end{equation}
where
\begin{equation}
  \label{eq:a_prime}
  a' = 3bc-a = \frac{b^{2}+c^{2}}{a},
\end{equation}
and similarly for $b'$ and $c'$. Hence, there are three involutions
$\sigma_{k}$ on the set of Markov triples that map any triple
$(a,b,c)$ to its neighbors:
\begin{equation}
  \label{eq:sigma_k}
  \sigma_{1}(a,b,c)=(a',b,c),\quad
  \sigma_{2}(a,b,c)=(a,b',c),\quad
  \sigma_{3}(a,b,c)=(a,b,c').  
\end{equation}
 These
involutions act without fixed points and every Markov triple can be
obtained from a single Markov triple, for example from $(1,1,1)$, by
applying a composition of these involutions. The sequence of involutions
is uniquely determined if one demands that no triple is visited
twice. Thus, the solutions of Markov's equation~\eqref{eq:markov} form
a trivalent tree, called the \emph{Markov tree}, with Markov triples as vertices and
edges connecting neighbors (see Fig.~\ref{fig:markovtree}).
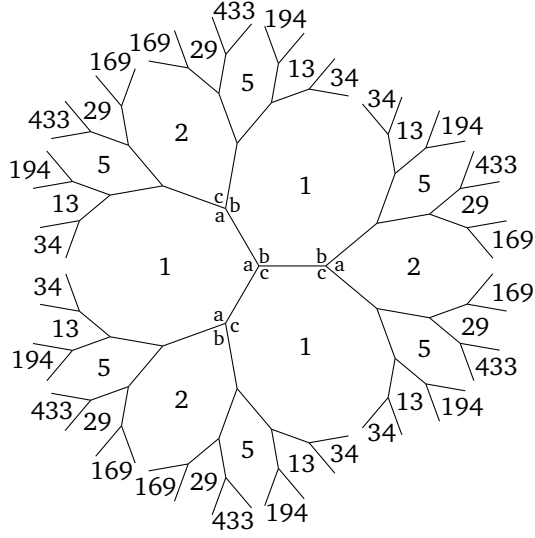
\begin{figure}
  \centering
  \begin{tikzpicture}[grow cyclic]
  \tikzstyle{level 1}=[level distance=25pt,sibling angle=120] 
  \tikzstyle{level 2}=[level distance=25pt,sibling angle=80] 
  \tikzstyle{level 3}=[level distance=20pt,sibling angle=60] 
  \tikzstyle{level 4}=[level distance=15pt,sibling angle=60] 
  \tikzstyle{level 5}=[level distance=15pt,sibling angle=60] 
  \tikzstyle{every node}=[font=\small]
  \coordinate
  child foreach \x in {1,2,3}
  {child foreach \x in {1,2}
    {child foreach \x in {1,2}
      {child foreach \x in {1,2}
        {child foreach \x in {1,2}}}}};
  
  \draw (-35pt,0pt) node {1};
  \draw[rotate=120] (-35pt,0pt) node {1};
  \draw[rotate=-120] (-35pt,0pt) node {1};
  
  \draw (58pt,0pt) node {2};
  \draw[rotate=120] (58pt,0pt) node {2};
  \draw[rotate=-120] (58pt,0pt) node {2};
  
  \draw (62pt,31pt) node {5};
  \draw (62pt,-31pt) node {5};
  \draw[rotate=120] (62pt,31pt) node {5};
  \draw[rotate=120] (62pt,-31pt) node {5};
  \draw[rotate=-120] (62pt,31pt) node {5};
  \draw[rotate=-120] (62pt,-31pt) node {5};
  
  \draw (81pt,23pt) node {29};
  \draw (81pt,-23pt) node {29};
  \draw[rotate=120] (81pt,23pt) node {29};
  \draw[rotate=120] (81pt,-23pt) node {29};
  \draw[rotate=-120] (81pt,23pt) node {29};
  \draw[rotate=-120] (81pt,-23pt) node {29};
  
  \draw (56.5pt,51pt) node {13};
  \draw (56.5pt,-51pt) node {13};
  \draw[rotate=120] (56.5pt,51pt) node {13};
  \draw[rotate=120] (56.5pt,-51pt) node {13};
  \draw[rotate=-120] (56.5pt,51pt) node {13};
  \draw[rotate=-120] (56.5pt,-51pt) node {13};
  
  \draw (95pt,10pt) node {169};
  \draw (95pt,-10pt) node {169};
  \draw[xshift=0.5pt,yshift=0pt] [rotate=120] (95pt,10pt) node {169};
  \draw[xshift=-2pt,yshift=-3pt] [rotate=120] (95pt,-10pt) node {169};
  \draw[xshift=0pt,yshift=4pt] [rotate=-120] (95pt,10pt) node {169};
  \draw[xshift=1pt,yshift=0pt] [rotate=-120] (95pt,-10pt) node {169};
  
  \draw (89pt,38pt) node {433};
  \draw (89pt,-38pt) node {433};
  \draw[xshift=-1pt,yshift=-3pt] [rotate=120] (89pt,38pt) node {433};
  \draw[xshift=2.5pt,yshift=0pt] [rotate=120] (89pt,-38pt) node {433};
  \draw[xshift=2pt,yshift=0pt] [rotate=-120] (89pt,38pt) node {433};
  \draw[xshift=0pt,yshift=3.5pt] [rotate=-120] (89pt,-38pt) node {433};
  
  \draw (76pt,53pt) node {194};
  \draw (76pt,-53pt) node {194};
  \draw[xshift=-2pt,yshift=-3pt] [rotate=120]  (76pt,53pt) node {194};
  \draw[xshift=2pt,yshift=1pt] [rotate=120]  (76pt,-53pt) node {194};
  \draw[xshift=2.5pt,yshift=-1pt] [rotate=-120]  (76pt,53pt) node {194};
  \draw[xshift=-1pt,yshift=4pt] [rotate=-120]  (76pt,-53pt) node {194};
  
  \draw (46pt,63pt) node {34};
  \draw (46pt,-63pt) node {34};
  \draw (32pt,71pt) node {34};
  \draw (32pt,-71pt) node {34};
  \draw (-79pt,8pt) node {34};
  \draw (-79pt,-8pt) node {34};
  
  \draw (-5pt, 0pt) node {\scriptsize $a$};
  \draw (2.5pt, 5pt) node {\scriptsize $b$};
  \draw (2pt, -4pt) node {\scriptsize $c$};
  
  \draw (32pt, 1.75pt) node {\scriptsize $a'$};
  \draw (23.5pt, 5pt) node {\scriptsize $b$};
  \draw (23.5pt, -4pt) node {\scriptsize $c$};
  
  \draw (-15.5pt, 17.5pt) node {\scriptsize $a$};
  \draw (-8pt, 23pt) node {\scriptsize $b$};
  \draw (-15pt, 28.5pt) node {\scriptsize $c'$};
  
  \draw (-16pt, -18pt) node {\scriptsize $a$};
  \draw (-8pt, -22pt) node {\scriptsize $c$};
  \draw (-17pt, -29pt) node {\scriptsize $b'$};
\end{tikzpicture}  
  \caption{Markov tree}
  \label{fig:markovtree}
\end{figure}

\begin{theorem}[Markov, Diophantine approximation version]

  (i) Let $(a,b,c)$ be any Markov triple, let $p_{1}$, $p_{2}$ be
  integers satisfying
  \begin{equation}
    \label{eq:p1p2}
    p_{2}b-p_{1}a=c,
  \end{equation}
  and let
  \begin{equation}
    \label{eq:markov_x}
    x = \frac{p_{2}}{a}+\frac{b}{ac}-\frac{3}{2} + \sqrt{\frac{9}{4}-\frac{1}{c^{2}}}\,.
  \end{equation}
  Then there are infinitely many fractions $\frac{p}{q}$
  satisfying~\eqref{eq:lambda} with
  \begin{equation}
    \label{eq:markov_lambda}
    \lambda = \sqrt{9-\frac{4}{c^{2}}},
  \end{equation}
  but only finitely many for any larger value of $\lambda$.

  (ii) Conversely, suppose $x'$ is an irrational number such that only
  finitely many fractions~$\frac{p}{q}$ satisfy~\eqref{eq:lambda} for
  some $\lambda<3$. Then there exists a unique sorted Markov triple
  $(a,b,c)$ such that $x'$ is equivalent to $x$ defined
  by equation~\eqref{eq:markov_x}.
\end{theorem}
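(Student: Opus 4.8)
The strategy is to push the whole statement through the dictionary of the introduction and reduce it to a single inequality about an ideal triangle. To an irrational $x'$ I attach the geodesic $\gamma_{x'}$ of $\mathbb{H}^{2}$ with one ideal endpoint at $x'$ (in part~(i) its other endpoint will be the conjugate of $x$, so that $\gamma_{x}$ is a lift of a closed geodesic), and to a reduced fraction $\tfrac pq$ the Ford horocycle resting on it --- the vector $(p,q)$ in the dictionary. Up to the correction factor isolated just after~\eqref{eq:equiv_number} (which tends to $1$ as $\tfrac pq\to x'$), the inequality~\eqref{eq:lambda} says that the signed distance from this horocycle to $\gamma_{x'}$ is $<\log\tfrac2\lambda$; and the horocycle-versus-geodesic entry of the dictionary, $\log\frac{f(p,q)}{\sqrt{|\det f|}}$, shows that this distance equals $\log\frac{2|f(p,q)|}{\sqrt{\operatorname{disc}f}}$ whenever $\gamma_{x'}$ is the axis of an indefinite form $f$. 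Hence the largest admissible $\lambda$ for $x'$, written $L(x')$ as in Table~\ref{tab:numbers}, equals $2e^{-\delta}$, where $\delta$ is how closely --- eventually, i.e.\ in the limit toward $x'$ --- $\gamma_{x'}$ approaches the Ford horocycles; equivalently, since all Ford horocycles descend to the single canonical horocycle of the modular torus $T$ and $\gamma_{x'}$ descends to a geodesic ray $\bar\gamma_{x'}$ on $T$, $\delta$ is the eventual closest approach of $\bar\gamma_{x'}$ to that horocycle. Getting this translation exactly right --- that the relevant infimum is realized along the best approximations, and that the correction factor is harmless in the $\limsup$ --- is one of the two delicate points.

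For part~(i): by Penner's interpretation (Sec.~\ref{sec:triangulations}) the Markov triple $(a,b,c)$ is the weight vector of an ideal triangulation $\tau$ of $T$ decorated with the canonical horocycle, and a short computation identifies the $x$ of~\eqref{eq:markov_x} as an endpoint of a lift of the simple closed geodesic $\bar\gamma$ on $T$ associated with the weight-$c$ edge of $\tau$ --- the other endpoint being the conjugate $\tfrac{p_{2}}a+\tfrac b{ac}-\tfrac32-\sqrt{\tfrac94-\tfrac1{c^{2}}}$, so that $x$ and its conjugate are the roots of the Markov form attached to $c$ and $x-\bar x=\sqrt{9-4/c^{2}}$ is already the asserted value of $\lambda$. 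Being freely homotopic to an edge of $\tau$, $\bar\gamma$ meets the remaining two edges once each, hence crosses each of the two ideal triangles of $\tau$ in a single arc running from one side to another; so the question ``How far can a straight line crossing a triangle stay away from all vertices?'' --- with ``vertices'' read as ``decorating horocycles'' --- applies verbatim. Feeding the triangle-crossing estimate of Sec.~\ref{sec:crossing} the weights $(a,b,c)$ and using Markov's equation~\eqref{eq:markov} pins the closest approach of $\bar\gamma$ to the canonical horocycle to exactly $\log\frac{2c}{\sqrt{9c^{2}-4}}$, i.e.\ $L(x)=\sqrt{9-4/c^{2}}$, realized periodically and nowhere bettered; so~\eqref{eq:lambda} has infinitely many solutions with this $\lambda$ and only finitely many for any larger one. (Changing $p_{1},p_{2}$ subject to~\eqref{eq:p1p2}, or the representative $(a,b,c)$ of the class, only replaces $x$ by an equivalent number, which does not affect $L$.)

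For part~(ii): if only finitely many fractions satisfy~\eqref{eq:lambda} for some $\lambda<3$, then $L(x')<3$, so by the translation above the ray $\bar\gamma_{x'}$ eventually stays at signed distance bounded below away from the canonical horocycle, i.e.\ it is eventually confined to a definite compact part of $T$. Two things then have to be shown. First, such a ray is eventually embedded: a transverse self-intersection of a geodesic on $T$ forces it into the cusp past the threshold $\delta=\log\tfrac23$, i.e.\ past $\lambda=3$ --- again an elementary estimate in a decorated ideal triangle, and this is where the $3$ in the hypothesis does its work. Second, an eventually embedded geodesic ray on $T$ whose eventual closest approach beats that threshold is asymptotic to a simple closed geodesic, and that geodesic is one of the $\bar\gamma$'s of part~(i). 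I would prove this by tracking $\bar\gamma_{x'}$ through the Farey tessellation: it runs through a sequence of ideal triangles, each obtained from the previous by a flip, and the requirement that from some point on the ray stay farther from the vertices than the $\lambda=3$ bound allows translates --- via the same crossing estimate --- into a rigid constraint on the flip sequence, forcing it eventually periodic of ``Markov type''. This is exactly the implication $\text{(c)}\Rightarrow\text{(a)}$ of Proposition~\ref{prop:markov_forms_geo}; it pins down the asymptotic simple closed geodesic, hence the equivalence class of $x'$, as that of a unique sorted Markov triple, uniqueness coming from bookkeeping the flip sequence (the position in the Markov tree).

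The main obstacle is part~(ii): extracting from the strict bound $L(x')<3$ the combinatorial rigidity --- both that self-intersections are incompatible with $\lambda<3$, and that among embedded recurrent rays only those with eventually periodic Markov-type flip sequences beat the $\lambda=3$ threshold (so that, in particular, rays limiting on a minimal non-closed lamination are excluded). Everything is designed to funnel into one clean inequality about a line crossing a decorated ideal triangle, so the real labor is (a) proving that inequality with the correct constants in Sec.~\ref{sec:crossing}, and (b) checking that the two triangles of a Markov triangulation, and the flip combinatorics in general, feed into it so as to produce exactly the numbers $\sqrt{9-4/c^{2}}$ together with the gap up to $3$. The residual issues in part~(i) --- matching~\eqref{eq:markov_x} to the right geodesic and fixing the normalization of the horocycle--geodesic distance --- are routine but want care.
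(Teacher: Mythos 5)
Your proposal follows the paper's architecture in all essentials: translate via the dictionary of Sections~\ref{sec:horo}--\ref{sec:dhg}, identify $L(x)$ with $2e^{-\delta}$ where $\delta$ is the infimum of signed distances to Ford circles along a geodesic ending at $x$, realize the Markov triple as the edge weights of an ideal triangulation of the decorated modular torus (Proposition~\ref{prop:triangs}), feed those weights into the triangle-crossing estimate (Propositions~\ref{prop:crossing1}--\ref{prop:crossing2}) with $\delta=3$ coming from Markov's equation to get $L(x)=\sqrt{9-4/c^{2}}$, and in part~(ii) track the flip sequence of triangulations to force eventual periodicity --- the implication $(c)\Rightarrow(a)$ of Proposition~\ref{prop:markov_forms_geo}. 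Two remarks on where you diverge from the paper. First, you propose to split part~(ii) into ``$L(x')<3$ implies the ray is eventually embedded'' (via a claimed elementary bound showing a self-intersection forces the geodesic below $-\log\frac32$) followed by ``embedded and below the threshold implies asymptotic to a simple closed geodesic.'' The paper avoids this split: the recursive flip-sequence construction in the proof of $(c)\Rightarrow(a)$ produces, directly from the mere assumption that $\pi(g)$ is \emph{not} simple closed (with both ends irrational), a sequence of triangulations with unboundedly large base weights $c_{n}$, which drives the distance to Ford circles down to $-\log\frac32$; this handles self-intersecting geodesics and minimal non-closed laminations in one stroke. Your preliminary ``self-intersection is incompatible with $\lambda<3$'' step is not visible in a single decorated ideal triangle --- a self-intersection is a global phenomenon on the torus --- and you would end up reproving it via the flip argument anyway, so the split buys nothing. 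Second, the ``correction factor isolated just after~\eqref{eq:equiv_number}'' is the $\GLTZ$-equivariance identity for approximation quality of equivalent numbers; the factor you actually need is the convergence of signed distances to two geodesics sharing an ideal endpoint, which the paper invokes explicitly in the proofs of Hurwitz's theorem (Section~\ref{sec:hurwitz}) and of Proposition~\ref{prop:markov_dioph_geo}. By Proposition~\ref{prop:equivariant} the two factors coincide, so your claim is correct, but the paper's formulation via Proposition~\ref{prop:markov_dioph_geo} --- which works throughout with the vertical geodesic from $x$ to $\infty$ rather than the axis of the Markov form --- keeps the translation~\eqref{eq:dhv} exact rather than asymptotic and is worth adopting for cleanliness.
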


\begin{remark}
  \label{rem:markov}
  A few remarks, first some terminology. 

  (i) The \emph{Lagrange number}~$L(x)$ of an irrational number $x$ is
  defined by
  \begin{equation*}
    L(x) = \sup\big\{\lambda\in\R\,\big|\,\text{infinitely many fractions
      $\tfrac{p}{q}$ satisfy~\eqref{eq:lambda}}\big\},
  \end{equation*}
  and the set of Lagrange numbers $\{L(x)\,|\,x\in\R\setminus\Q\}$ is
  called the \emph{Lagrange
    spectrum}. Equation~\eqref{eq:markov_lambda} describes the part of
  the Lagrange spectrum below $3$, and equation~\eqref{eq:markov_x}
  provides representatives of the corresponding equivalence classes of
  irrational numbers.

  (ii) It may seem strangely unsymmetric that $p_{2}$ appears in
  equation~\eqref{eq:markov_x} and $p_{1}$ does not. The appearance is
  deceptive: Markov's equation~\eqref{eq:markov} and
  equation~\eqref{eq:p1p2} imply that equation~\eqref{eq:markov_x} is
  equivalent to
  \begin{equation*}
    x = \frac{p_{1}}{b}-\frac{a}{bc}+\frac{3}{2} 
    + \sqrt{\frac{9}{4}-\frac{1}{c^{2}}}\,.
  \end{equation*}

  (iii) The three integers of a Markov triple are pairwise coprime. (This is
  true for $(1,1,1)$, and if it is true for some Markov triple, then
  also for its neighbors.) Therefore, integers $p_{1}$, $p_{2}$
  satisfying~\eqref{eq:p1p2} always exist. Different solutions
  $(p_{1}, p_{2})$ for the same Markov triple lead to equivalent
  values of $x$, differing by integers.

  (iv) The following question is more subtle: Under what conditions
  do \emph{different} Markov triples $(a,b,c)$ and $(a',b',c')$ lead
  to equivalent numbers $x$, $x'$? Clearly, if $c\not=c'$, then $x$
  and $x'$ are not equivalent because $\lambda\not=\lambda'$. But
  Markov triples $(a,b,c)$ and $(b,a,c)$ lead to equivalent
  numbers. In general, the numbers $x$ obtained by~\eqref{eq:markov_x}
  from Markov triples $(a,b,c)$ and $(a',b',c')$ are equivalent if and
  only if one can get from $(a,b,c)$ to $(a',b',c')$ or $(b',a',c')$
  by a finite composition of the involutions $\sigma_{1}$ and
  $\sigma_{2}$ fixing $c$. In this case, let us consider the Markov
  triples \emph{equivalent}. Every equivalence class of Markov triples
  contains exactly one sorted Markov triple. It is not known whether
  there exists only one sorted Markov triple $(a,b,c)$ for every
  Markov number $c$. This was remarked by Frobenius~\cite{frobenius13}
  some one hundred years ago, and the question is still open. The
  affirmative statement is known as the \emph{Uniqueness Conjecture
    for Markov Numbers}. Consequently, it is not known whether there
  is only one equivalence class of numbers $x$ for every Lagrange
  number $L(x)<3$.

  (v) The attribution of Hurwitz's theorem may seem strange. It
  covers only the simplest part of Markov's theorem, and Markov's work
  precedes Hurwitz's. However, Markov's original theorem dealt with
  indefinite quadratic forms (see the following section). Despite its
  fundamental importance, Markov's groundbreaking work gained
  recognition only very slowly. Hurwitz began translating Markov's
  ideas to the setting of Diophantine approximation. As this circle of
  results became better understood by more mathematicians, the
  translation seemed more and more straightforward. Today, both
  versions of Markov's theorem, the Diophantine approximation version
  and the quadratic forms version, are unanimously attributed to
  Markov.
\end{remark}

\section{Markov's theorem on indefinite quadratic forms}
\label{sec:forms}

In this section, we recall the quadratic forms version of Markov's
theorem.

We consider binary quadratic forms 
\begin{equation}
  \label{eq:f}
  f(p,q)=Ap^{2}+2Bpq+Cq^{2},
\end{equation}
with real coefficients $A$, $B$, $C$. The \emph{determinant} of such a
form is the determinant of the corresponding symmetric $2\times
2$-matrix,
\begin{equation}
  \label{eq:detf}
  \det f=AC-B^{2}.
\end{equation}
Markov's theorem deals with indefinite forms, i.e., forms with
\begin{equation*}
  \det f < 0.
\end{equation*}
In this case, the quadratic polynomial 
\begin{equation}
  \label{eq:poly}
  f(x,1)=Ax^{2}+2Bx+C
\end{equation}
has two distinct real roots, 
\begin{equation}
  \label{eq:zeros}
  \frac{-B\pm\sqrt{-\det f}}{A},
\end{equation}
provided $A\not=0$. If $A=0$, it makes sense to consider
$\frac{-C}{2B}$ and $\infty$ as two roots in the real projective line
$\R P^{1}\cong \R\cup\{\infty\}$. Then the following statements are
equivalent:

\smallskip
\begin{compactenum}[(i)]
\item The polynomial~\eqref{eq:poly} has at least one root in
  $\Q\cup\{\infty\}$.
\item There exist integers $p$ and $q$, not both zero, such that
  $f(p,q)=0$.
\end{compactenum}
\smallskip

Conversely, one may ask: For which indefinite forms $f$ does the set of values
\begin{equation*}
  \big\{f(p,q)\,\big|\,(p,q)\in\Z^{2}, (p,q)\not=(0,0)\big\}\subseteq\R
\end{equation*}
stay farthest away from $0$. This makes sense if we require the forms
$f$ to be normalized to $\det f=-1$. Equivalently, we may ask: For
which forms is the infimum
\begin{equation}
  \label{eq:mf}
  M(f)=\inf_{\substack{(p,q)\in\Z^{2}\\(p,q)\not=0}} 
    \frac{|f(p,q)|}{\sqrt{|\det f|}}
\end{equation}
maximal? These forms are ``most unlike'' forms with at least one
rational root, for which $M(f)=0$. Korkin and
Zolotarev~\cite{korkine73} gave the following answer:

\begin{theorem}[Korkin~\& Zolotarev]
  Let $f$ be an indefinite binary quadratic form with real
  coefficients. If $f$ is equivalent to the form 
  \begin{equation*}
    p^{2}-pq-q^{2},
  \end{equation*}
  then 
  \begin{equation*}
    M(f) = \frac{2}{\sqrt{5}}\,. 
  \end{equation*}
  Otherwise,
  \begin{equation}
    \label{eq:korkin_otherwise}
    M(f)\leq\frac{1}{\sqrt{2}}\,.
  \end{equation}
\end{theorem}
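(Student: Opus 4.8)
The plan is to use the dictionary between indefinite binary quadratic forms and geodesics in the hyperbolic plane. To an indefinite form $f$ with $\det f < 0$ one associates the geodesic $\gamma_f$ in $\mathbb{H}$ whose endpoints on $\partial\mathbb{H} = \mathbb{R}P^1$ are the two roots \eqref{eq:zeros} of the polynomial \eqref{eq:poly}; and to a nonzero integer vector $(p,q)$ one associates a horocycle based at $p/q \in \mathbb{R}P^1$. Using the entry of the dictionary ``signed distance between horocycle and geodesic $= \log\big(f(p,q)/\sqrt{|\det f|}\big)$'' (formula \eqref{eq:dhgf}), the quantity $|f(p,q)|/\sqrt{|\det f|}$ that appears inside the infimum in \eqref{eq:mf} becomes the exponential of a signed hyperbolic distance between a fixed horocycle (one per primitive integer vector) and the geodesic $\gamma_f$. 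Thus $M(f)$ is maximal precisely when $\gamma_f$ stays as far as possible, in the signed-distance sense, from all the horocycles in the standard $\mathrm{SL}_2(\mathbb{Z})$-orbit; and the action of $\mathrm{GL}_2(\mathbb{Z})$ on forms by equivalence corresponds to the isometric action on $\mathbb{H}$, so $M$ descends to a function on geodesics in the modular torus $\mathbb{H}/\Gamma'$ (the quotient by the commutator subgroup, whose cusp carries the relevant horocycle).

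From here the argument would run in parallel with — in fact, would be a special case of — the geometric proof of Markov's theorem developed in the body of the paper. First I would reduce, via the ``how far can a line crossing a triangle stay from the vertices'' lemma, to understanding which geodesics in the modular torus realize a distance to the decorating horocycle above the threshold corresponding to $1/\sqrt2$. The key computation is the relation between the two constants: $M(f) = 2/\sqrt5$ corresponds to the same Lagrange-type value $\lambda = \sqrt 5$ attached to the Markov triple $(1,1,1)$ — note $2/\sqrt 5 = 2/\sqrt 5$ matches $\lambda=\sqrt5$ under the normalization in \eqref{eq:markov_lambda} since $M(f)^2 = 4/5$ and $\lambda^2 = 5$, i.e. $M(f) = 2/\lambda$ — whereas $1/\sqrt 2$ corresponds to the next Markov triple $(1,1,2)$, with $\lambda = 2\sqrt2$ and $M = 2/\lambda = 2/(2\sqrt2) = 1/\sqrt2$. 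So the claim is exactly: if $f$ is not equivalent to $p^2 - pq - q^2$ (the form attached to the geodesic that closes up to the systole of the modular torus, i.e. to $(1,1,1)$), then $\gamma_f$ cannot beat the bound set by the triangulation associated to $(1,1,2)$.

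Concretely the steps are: (1) translate $M(f)$ into the signed distance between $\gamma_f$ and the horocycle orbit, using \eqref{eq:dhh} and \eqref{eq:dhgf}; (2) observe that a bi-infinite geodesic with $M(f) > 0$ descends to a geodesic in the modular torus that must repeatedly cross the edges of \emph{every} ideal triangulation, and apply the crossing-a-triangle estimate to the Markov triangulations; (3) check that the triangulation coming from the Markov triple $(1,1,1)$ is the unique one for which a crossing geodesic can stay at signed distance $> \log(1/\sqrt2)$ from the horocycles, and that in that case the extremal geodesic is exactly $\gamma_f$ for $f \sim p^2-pq-q^2$, with $M(f) = 2/\sqrt5$; (4) for every other triangulation (equivalently, every other Markov triple, the smallest being $(1,1,2)$), the crossing estimate forces $M(f) \le 1/\sqrt2$, with the value $1/\sqrt2$ attained along the geodesic associated to $(1,1,2)$.

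The main obstacle is step (3): one has to pin down, among the geodesics that do stay far from the horocycles, that the only way to exceed the $1/\sqrt2$ threshold is to sit on the $(1,1,1)$-geodesic, and this is precisely where the combinatorics of ideal triangulations of the modular torus and the sharp form of the crossing-a-triangle lemma have to be combined — essentially the heart of Markov's theorem itself. Everything else is bookkeeping with the dictionary and the explicit normalizations. I should also be slightly careful at the boundary case $A = 0$ (a root at $\infty$), where the geodesic picture still makes sense in $\mathbb{R}P^1$ but the algebra needs the projective convention mentioned above; and I should note that \eqref{eq:korkin_otherwise} is stated as a non-strict inequality precisely because the bound $1/\sqrt2$ is attained, by the form associated to $(1,1,2)$, namely (up to equivalence) $p^2 - 2pq - q^2$.
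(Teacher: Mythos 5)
Your plan matches the route the paper actually takes for the strong inequality $M(f)\le\frac{1}{\sqrt{2}}$: the paper does not give Korkin--Zolotarev a self-contained proof, but deduces it (implicitly) from the full machinery of Proposition~\ref{prop:markov_forms_geo}, which is exactly what you describe. In fact, for the extremal geodesics one gets $M(f)=1/r=2/\sqrt{9-4/c^{2}}$ from~\eqref{eq:markov_r} and~\eqref{eq:markov_mf}, which exceeds $\frac{1}{\sqrt{2}}$ precisely when $c=1$, i.e.\ for $(1,1,1)$, and equals $\frac{1}{\sqrt{2}}$ at $c=2$, so your $\lambda$-bookkeeping is right; your representative $p^{2}-2pq-q^{2}$ for the $(1,1,2)$ class is equivalent to the paper's $p^{2}-2q^{2}$ via $(p,q)\mapsto(p-q,q)$.

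Two points of comparison are worth noting. First, the paper explicitly remarks, right after Proposition~\ref{prop:dhgf}, that the \emph{weaker} bound ($M(f)<\frac{2}{\sqrt{5}}$ when $f\not\sim p^2-pq-q^2$) has a much lighter proof: one simply adapts the proof of Hurwitz's theorem in Sec.~\ref{sec:hurwitz}, using only Lemma~\ref{lem:hurwitz_farthest} and the Farey tessellation, with no need for general ideal triangulations of the modular torus. You do not mention this shortcut, and since the paper states it deliberately, it is worth knowing that only the sharp constant $\frac{1}{\sqrt{2}}$ forces the full Markov machinery. Second, your step (3) is phrased a bit imprecisely: every bi-infinite geodesic crosses triangles of \emph{every} ideal triangulation, so the issue is not which triangulation the geodesic ``stays far from.'' The correct logic, which is what Proposition~\ref{prop:markov_forms_geo} implements, is: if $\inf d(h,\gamma_f)>-\log\frac{3}{2}$ then $\pi(\gamma_f)$ is a simple closed geodesic, hence avoids a unique ideal arc $c$ (Proposition~\ref{prop:arcs_geodesics}), and $M(f)$ is then determined by the weight of that arc via~\eqref{eq:markov_r}; $M(f)>\frac{1}{\sqrt{2}}$ forces that weight to be $1$, hence the Markov triple $(1,1,1)$, hence $f\sim p^{2}-pq-q^{2}$. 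You identify this as the crux yourself, so this is a matter of cleaning up the statement rather than a gap in the idea.
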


Binary quadratic forms $f$, $\tilde{f}$ are called \emph{equivalent} if 
there are integers $a$, $b$, $c$, $d$ satisfying
\begin{equation*}
  |ad-bc|=1,
\end{equation*}
such that 
\begin{equation}
  \label{eq:equiv_form}
  \tilde{f}(p,q)=f(ap+bq,cp+dq).
\end{equation}
Equivalent quadratic forms attain the same values. 

Hurwitz's theorem is roughly the Diophantine approximation version of
\mbox{Korkin} \& Zolotarev's theorem. They did not publish a proof, but
Markov obtained one from them personally. This was the starting point
of his work on quadratic forms~\cite{markoff79, markoff80}, which
establishes a bijection between the classes of forms for which
$M(f)\geq\frac{2}{3}$ and sorted Markov triples:

\begin{theorem}[Markov, quadratic forms version]
 \label{thm:markov}
 (i) Let $(a,b,c)$ be any Markov triple, let $p_{1}$, $p_{2}$ be
 integers satisfying equation~\eqref{eq:p1p2}, let 
 \begin{equation}
   \label{eq:markov_x0}
   x_{0}=\frac{p_{2}}{a}+\frac{b}{ac}-\frac{3}{2}\,,
 \end{equation}
 let 
 \begin{equation}
   \label{eq:markov_r}
   r = \sqrt{\frac{9}{4}-\frac{1}{c^{2}}}\,
 \end{equation}
 and let $f$ be the indefinite quadratic form
 \begin{equation}
   \label{eq:markov_f}
   f(p,q)=p^{2}-2x_{0}\,pq+(x_{0}^{2}-r^{2})\,q^{2}.
 \end{equation}
 Then 
 \begin{equation}
   \label{eq:markov_mf}
   M(f)=\frac{1}{r},
 \end{equation}
 and the infimum in \eqref{eq:mf} is attained.

 (ii) Conversely, suppose $\tilde{f}$ is an indefinite binary quadratic form with
 \begin{equation*}
   M(\tilde{f}) > \frac{2}{3}.
 \end{equation*}
 Then there is a unique sorted Markov triple $(a,b,c)$ such that
 $\tilde{f}$ is equivalent to a multiple of the form $f$ defined by
 equation~\eqref{eq:markov_f}.
\end{theorem}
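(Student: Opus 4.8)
\emph{Plan.}
The idea is to translate the whole statement into hyperbolic geometry by means of the dictionary above and to reduce it to the elementary question announced in the introduction: how far can a geodesic crossing an ideal triangle stay from the three horocycles decorating its vertices? By \eqref{eq:dhgf}--\eqref{eq:dhzf}, the ratio $|f(p,q)|/\sqrt{|\det f|}$ is the exponential of the signed distance between the geodesic $g_f$ that corresponds to the indefinite form $f$ and the horocycle that corresponds to the primitive integer vector $(p,q)$. Hence $M(f)=\exp(\delta_{\min})$, where $\delta_{\min}$ is the infimum of the signed distances from $g_f$ to the horocycles coming from primitive integer vectors. Since the commutator subgroup $\Gamma'\subset\PSLTZ$ acts transitively on these horocycles and the modular torus (the quotient of the hyperbolic plane by $\Gamma'$) has a single cusp, $\delta_{\min}$ is just the signed distance, measured in the modular torus with its standard horocycle at the cusp, between that horocycle and the projection $\bar g_f$ of $g_f$. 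Everything thus reduces to understanding geodesics of the modular torus and their distance from the cusp; the triangle question enters because every ideal triangulation of the modular torus lifts to a triangulation of the hyperbolic plane whose vertices are precisely the cusps, i.e.\ the centres of those horocycles.

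For part (i) I would start from Penner's interpretation of Markov's equation (Sec.~\ref{sec:triangulations}): the Markov triple $(a,b,c)$ determines an ideal triangulation $\mathcal{T}$ of the modular torus, decorated by the horocycle at the cusp, whose three edges carry weights determined by $a$, $b$, $c$, with Markov's equation \eqref{eq:markov} arising as the Ptolemy relations together with the closing-up condition of the torus. Let $\gamma$ be the simple closed geodesic of the modular torus attached to the edge of $\mathcal{T}$ labelled $c$ --- the one disjoint from that edge and crossing each of the other two exactly once (Sec.~\ref{sec:arcs_geodesics}). Developing $\mathcal{T}$ and placing the lifted horocycles from the edge weights via \eqref{eq:dhh}, one checks that a lift of $\gamma$ is the geodesic with endpoints $x_0\pm r$ of \eqref{eq:markov_x0} and \eqref{eq:markov_r}; that is, $g_f$ for the form $f$ of \eqref{eq:markov_f} is such a lift, the computation simultaneously producing the integers $p_1,p_2$ of \eqref{eq:p1p2} as the relevant vertex labels (a different admissible choice of $(p_1,p_2)$ only translates $f$ and does not affect $M(f)$). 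It then remains to compute $\delta_{\min}$ for $\gamma$. Since $\gamma$ is \emph{simple} and misses the edge labelled $c$, its lift meets the lifted triangulation only along chords joining two sides of a triangle in the combinatorial type that avoids the vertex between them; and since $\mathcal{T}$ has only two triangles, which are isometric, the computation of $\delta_{\min}$ reduces to a handful of chords in a single decorated ideal triangle.

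This is where the key lemma is used: in a decorated ideal triangle, a geodesic running from one side to another stays within an explicitly computable signed distance of the nearest of the three horocycles, with equality for one distinguished chord. Evaluating this for the triangle with edge weights coming from $(a,b,c)$ and simplifying with \eqref{eq:markov} should give that the least signed distance from $\gamma$ to a horocycle equals $-\log r$ and is attained; hence $M(f)=1/r$, which is \eqref{eq:markov_mf}, and the infimum in \eqref{eq:mf} is attained. For part (ii) the same lemma runs in reverse. If $M(\tilde f)>\tfrac{2}{3}$, then $g_{\tilde f}$ keeps signed distance $>\log\tfrac{2}{3}$ from every horocycle over the cusp, i.e.\ $\bar g_{\tilde f}$ avoids the corresponding neighbourhood of the cusp of the modular torus. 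One shows that a complete geodesic of the modular torus with this property must be a simple closed geodesic --- the content of the implication ``(c)$\Rightarrow$(a)'' of Proposition~\ref{prop:markov_forms_geo}, which here replaces the symbolic-dynamics step of Markov's original argument: if $\bar g_{\tilde f}$ were not simple, or escaped into the cusp, or were dense in a larger geodesic lamination, then with respect to a suitably chosen ideal triangulation it would be forced to cross some triangle along a chord entering the forbidden neighbourhood of a vertex, contradicting the lemma. Finally, every simple closed geodesic of the modular torus is, via Penner's interpretation (Sec.~\ref{sec:arcs_geodesics}), the geodesic $\gamma$ attached to a unique equivalence class of Markov triples, hence to a unique sorted Markov triple (Rem.~\ref{rem:markov}(iii)); and $\gamma$ determines $g_{\tilde f}$ up to an isometry of the modular torus, which at the level of forms means equivalence up to a nonzero scalar. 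This gives the uniqueness in part (ii) and completes the proof.

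I expect the main obstacle to be twofold. First, the geometric optimization: one must pin down the extremal chord in a decorated ideal triangle, compute the extremal signed distance exactly rather than up to a constant, and fit it to the precise pattern of chords cut out by $\gamma$, so that the answer comes out as exactly $r$ of \eqref{eq:markov_r}; tracking signs (a horocycle may penetrate a geodesic) and the normalizing constant in the edge weights is the fiddly part. Second, the step in part (ii) from ``stays far from the cusp'' to ``is a simple closed geodesic'': Gorshkov's device of using \emph{all} ideal triangulations of the modular torus, not only the projected Farey tessellation, makes this nearly combinatorial, but one still has to argue that the triangulations are rich enough to catch every non-simple, cusp-bound, or lamination-filling geodesic in the act of dipping into the forbidden cusp neighbourhood.
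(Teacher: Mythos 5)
Your proposal follows essentially the same route as the paper's own proof: translate $M(f)$ into a signed-distance statement via the dictionary, invoke Penner's interpretation of Markov triples as ideal triangulations of the modular torus, identify the relevant simple closed geodesic and verify its lift has the prescribed endpoints via the decorated-triangle computations (Propositions~\ref{prop:crossing1}--\ref{prop:crossing2}), and for the converse run Gorshkov's all-triangulations argument (the ``(c)$\Rightarrow$(a)'' step of Proposition~\ref{prop:markov_forms_geo}). The two fiddly points you flag at the end --- pinning down the exact extremal distance and showing that a geodesic avoiding a cusp neighbourhood is simple closed --- are precisely the steps the paper handles with Propositions~\ref{prop:crossing1}--\ref{prop:crossing2} and the recursively constructed sequence of triangulations $T_n$, so the plan is sound.
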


Note that the number $x$ defined by~\eqref{eq:markov_x} is a root of
the form $f$ defined by~\eqref{eq:markov_f}, and
$M(f)=\frac{2}{L(x)}$.  Table~\ref{tab:forms} lists representatives
$f(p,q)$ of the five classes of forms with the largest values of
$M(f)$.
\begin{remark}
  Here, too, the apparent asymmetry between $p_{1}$ and $p_{2}$ is
  deceptive (cf.\
  Remark~\ref{rem:markov}~(ii)). Equation~\eqref{eq:markov_x0} is
  equivalent to
  \begin{equation*}
    x_{0}=\frac{p_{1}}{b}-\frac{a}{bc}+\frac{3}{2}\,.
  \end{equation*}
\end{remark}

\begin{table}[h]
  \centering
  \renewcommand{\arraystretch}{1.5}
  \begin{tabular}{c|
                  >{$}c<{$}|
                  >{$}r<{$}@{$\;=\;$}>{$}l<{$}|
                  >{$}c<{$}@{ }>{$}c<{$}@{ }>{$}c<{$}|
                  >{$}c<{$}@{ }>{$}c<{$}}
    rank & f(p,q) & \multicolumn{2}{c|}{$M(f)$} & \ a\  & \ b\  & \ c\  & \ p_{1}\  &
                                                                       \ p_{2}\ \\
    \hline
    1
    & p^2-pq-q^2
    & \frac{2}{\sqrt{5}} & 0.89\ldots   
    & 1 & 1 & 1
    & 0 & 1
    \\
    2
    & p^2-2q^2
    & \frac{1}{\sqrt{2}} & 0.70\ldots 
    & 1 & 1 & 2
    & -1 & 1
    \\
    3
    &5p^2+pq-11q^2
    & \frac{10}{\sqrt{221}} & 0.67\ldots
    & 1 & 2 & 5
    & -1 & 2
    \\
    4
    & 13p^2+23pq-19q^2
    & \frac{26}{\sqrt{1517}} & 0.667\ldots
    & 1 & 5 & 13
    & -3 & 2
    \\
    5
    & 29p^2-5pq-65q^2
    & \frac{58}{\sqrt{7565}} & 0.6668\ldots
    & 2 & 5 & 29
    & -7 & 3
  \end{tabular}
  \caption{The five classes of indefinite quadratic forms whose values
    stay farthest away from zero}
\label{tab:forms}
\end{table}

\section{The hyperbolic plane}
\label{sec:hyperbolic}

We use the half-space model of the hyperbolic plane for all
calculations. In this section, we summarize some basic facts.

The hyperbolic plane is represented by the upper half-plane of the
complex plane,
\begin{equation*}
  H^{2}=\{z\in\C\,|\,\im z > 0\},
\end{equation*}
where the length of a curve $\gamma:[t_{0},t_{1}]\rightarrow H^{2}$ is
defined as
\begin{equation*}
  \int_{t_{0}}^{t_{1}}\frac{|\dot{\gamma}(t)|}{\im \gamma(t)}\,dt.
\end{equation*}
The model is conformal, i.e., hyperbolic angles are equal to euclidean
angles. The group of isometries is the projective general linear group,
\begin{equation*}
  \begin{split}
    \PGLTR &= \GLTR/\R^{*} \\
           & \cong \big\{A\in\GLTR\,\big|\,|\det
    A|=1\big\}/\{\pm\Id\},
  \end{split}
\end{equation*}
where the action $M:\PGLTR\rightarrow\Isom(H^{2})$ is
defined as follows: 

For
\begin{equation*}
  A=    
  \big(
  \begin{smallmatrix}
    a & b \\ c & d
  \end{smallmatrix}
  \big)\in\GLTR,
\end{equation*}
\begin{equation*}
  M_{A}(z)=
  \begin{dcases*}
    \frac{az+b}{cz+d} & if $\det A>0$,\\
    \frac{a\bar{z}+b}{c\bar{z}+d} & if $\det A<0$.
  \end{dcases*}
\end{equation*}
The isometry $M_{A}$ preserves orientation if $\det A>0$ and reverses
orientation if $\det A<0$. The subgroup of orientation preserving
isometries is therefore $\PSLTR\cong\SLTR/\{\pm\Id\}$.

Geodesics in the hyperbolic plane are euclidean half circles
orthogonal to the real axis or euclidean vertical lines (see
Fig.~\ref{fig:hplane}).
\begin{figure}
\labellist
\small\hair 2pt
 \pinlabel {$x+iy_{0}$} [l] at 80 131
 \pinlabel {$x+iy_{1}$} [l] at 80 106
 \pinlabel {$\displaystyle\log\frac{y_{0}}{y_{1}}$} [r] <-2pt, 0pt> at 80 118
 \pinlabel {geodesics} [lb] at 103 83
 \pinlabel {horocycles} [r] at 175 76
 \pinlabel {$p'^{2}$} [l] at 272 98
 \pinlabel {$\displaystyle\frac{1}{q^{2}}$} [l] at 272 72
 \pinlabel {$\displaystyle \frac{p}{q}$} [t] at 211 10
 \pinlabel {$h(p',0)$} [b] at 210 98
 \pinlabel {$h(p,q)$} [l] at 242 42
\endlabellist
\centering
\includegraphics[scale=1.0]{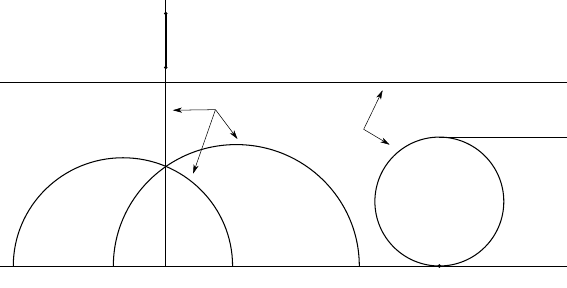}\vspace{0.25\baselineskip}
\caption{Geodesics and horocycles}
\label{fig:hplane}
\end{figure}
The hyperbolic distance between points $x+iy_{0}$ and
$x+iy_{1}$ on a vertical geodesic is 
\begin{equation*}
  \Big|\log\frac{y_{1}}{y_{0}}\Big|.
\end{equation*}

Apart from geodesics, horocycles will play an important role. They are
the limiting case of circles as the radius tends to
infinity. Equivalently, horocycles are complete curves of curvature
$1$. In the half-space model, horocycles are represented as euclidean
circles that are tangent to the real line, or as horizontal lines. The
center of a horocycle is the point of tangency with the real line, or
$\infty$ for horizontal horocycles.

The points on the real axis and $\infty\in\C P^{1}$ are called ideal
points. They do not belong to the hyperbolic plane, but they
correspond to the ends of geodesics. All horocycles centered at an
ideal point $x\in\R\cup\{\infty\}$ intersect all geodesics ending in
$x$ orthogonally. In the proof of Proposition~\ref{prop:dhv}, we will
use the fact that two horocycles centered at the same ideal point are
equidistant curves.

\section{Dictionary: horocycle --- 2D vector}
\label{sec:horo}

We assign a horocycle $h(p,q)$ to every $(p,q)\in\R^{2}\setminus\{(0,0)\}$
as follows (see Fig.~\ref{fig:hplane}):

\smallskip
\begin{compactitem}
\item For $q\not=0$, let $h(p,q)$ be the horocycle at
  $\frac{p}{q}$ with euclidean diameter~$\frac{1}{q^{2}}$.
\item Let $h(p,0)$ be the horocycle at~$\infty$ at height~$p^{2}$.
\end{compactitem}
\smallskip

The map $(p,q)\mapsto h(p,q)$ from $\R^{2}\setminus\{0\}$ to the space
of horocycles is surjective and two-to-one, mapping $\pm(p,q)$ to the
same horocycle. The map is equivariant with respect to the
$\PGLTR$-action~\cite[p.~665]{fock07}. More precisely:

\begin{proposition}[Equivariance]
  \label{prop:equivariant}
  For $A\in\GLTR$ satisfying $|\det A|=1$ and for
  $v\in\R^{2}\setminus\{0\}$, the hyperbolic isometry $M_{A}$ maps the
  horocycle $h(v)$ to $h(Av)$.
\end{proposition}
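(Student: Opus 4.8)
The plan is to verify the equivariance by a direct computation, exploiting the fact that the set of horocycles is the image of an action that is visibly compatible with the $\GLTR$-action. Since $M_A$ for $\det A = -1$ is $M_A = M_{A'}\circ M_{J}$ where $J = \bigl(\begin{smallmatrix}1&0\\0&-1\end{smallmatrix}\bigr)$ acts as $z\mapsto\bar z$ and $A' = AJ$ has positive determinant, and since $J$ maps $h(p,q)$ to itself (complex conjugation fixes the real axis pointwise and preserves euclidean circles tangent to it, and $J(p,q)^{\!\top}$ differs from $(p,q)^{\!\top}$ only by the sign of $q$, which does not change the horocycle), it suffices to treat the case $\det A = 1$, i.e.\ $A\in\SLTR$, acting by the Möbius transformation $z\mapsto\frac{az+b}{cz+d}$.

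First I would dispose of the orientation-reversing case as above. Then, for $A = \bigl(\begin{smallmatrix}a&b\\c&d\end{smallmatrix}\bigr)\in\SLTR$ and $v = (p,q)^{\!\top}$ with $Av = (ap+bq,\, cp+dq)^{\!\top} =: (p',q')^{\!\top}$, I would check that the euclidean circle $h(v)$ is carried by $M_A$ to $h(Av)$. One clean way: realize $h(p,q)$ as the set of $z$ with $\frac{\im z}{|qz - p|^{2}} = 1$ when $q\neq 0$ (the horocycle at $p/q$ of euclidean diameter $1/q^2$), and as $\im z = p^2$, i.e.\ $\frac{\im z}{|qz-p|^2}=1$ again with $q=0$, so that in all cases $h(p,q) = \{\, z \in H^2 \mid \im z = |qz - p|^{2}\,\}$. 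Writing $w = M_A(z) = \frac{az+b}{cz+d}$, the standard identity $\im w = \frac{\im z}{|cz+d|^{2}}$ together with $q'w - p' = (cz+d)^{-1}\bigl((q'a - p'c)z + (q'b - p'd)\bigr)$ and the computation $q'a - p'c = (ca+db)q?$—more carefully, $q'a - p'c = a(cp+dq) - c(ap+bq) = q(ad - bc) = q$ and $q'b - p'd = b(cp+dq) - d(ap+bq) = -p(ad-bc) = -p$, gives $q'w - p' = \frac{qz - p}{cz + d}$, hence $|q'w - p'|^{2} = \frac{|qz-p|^2}{|cz+d|^2} = \frac{\im w}{\im z}\,|qz-p|^2$. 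Therefore $z\in h(v)$, i.e.\ $\im z = |qz-p|^2$, holds if and only if $\im w = |q'w - p'|^2$, i.e.\ $w \in h(Av)$. This is exactly the claim, modulo noting that $M_A$ maps $H^2$ bijectively to $H^2$, so the full horocycle maps onto the full horocycle.

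The only mild obstacle is bookkeeping at the boundary of the cases: making sure the unified equation $\im z = |qz - p|^{2}$ really does describe $h(p,q)$ both when $q\neq 0$ (where one should check the euclidean circle through $p/q$ with the stated diameter is $\{\im z = |q|^2\,|z - p/q|^2\}$, an elementary rearrangement) and when $q = 0$ (where it reduces to $\im z = p^2$, the horizontal horocycle at height $p^2$), and that $M_A$ does not degenerate this description — e.g.\ when $q'=0$, so that $Av$ is a horocycle at $\infty$; the algebra above is uniform in this and needs no separate treatment, since $cz+d$ never vanishes on $H^2$. I expect the whole verification to be short; the work is entirely in the two-line matrix identities $q'a-p'c=q$ and $q'b-p'd=-p$ and the classical transformation law for $\im$ under Möbius maps.
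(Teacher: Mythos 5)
Your core computation is correct and takes a genuinely different route than the paper. The paper factors every isometry into the four generators $z\mapsto z+b$, $z\mapsto\lambda z$, $z\mapsto-\bar z$, $z\mapsto 1/\bar z$ (with their normalized matrices) and checks equivariance separately for each; you instead establish the unified characterization $h(p,q)=\{\,z\in H^{2}: \im z = |qz-p|^{2}\,\}$, valid for $q\neq 0$ and $q=0$ alike, and verify it in one stroke using the identities $q'a-p'c=(ad-bc)\,q$ and $q'b-p'd=-(ad-bc)\,p$ together with $\im M_A(z)=\im z/|cz+d|^{2}$. This single computation is cleaner and buys you a uniform argument with no case distinctions on $q$.

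There is, however, a genuine error in your reduction to $\det A=1$. The matrix $J=\bigl(\begin{smallmatrix}1&0\\0&-1\end{smallmatrix}\bigr)$ does not act as $z\mapsto\bar z$; that map sends $H^{2}$ to the lower half-plane and is not a hyperbolic isometry. Since $\det J=-1$, the paper's action gives $M_J(z)=\bar z/(-1)=-\bar z$, reflection in the imaginary axis. Consequently $M_J$ does \emph{not} fix $h(p,q)$: it carries the horocycle at $p/q$ to the horocycle at $-p/q$ of the same diameter, namely $h(-p,q)$. Likewise $Jv=(p,-q)$ is not $\pm(p,q)$ unless $p=0$, so ``changing the sign of $q$ does not change the horocycle'' is false: $h(p,-q)$ is centered at $-p/q$, not $p/q$. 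What saves you is that $h(p,-q)=h(-p,q)$ (because $h$ depends only on $\pm v$), and this is exactly $M_J h(v)$, so the equivariance for $J$ does hold — just not for the reasons you gave. Either correct the verification for $J$ along these lines, or, more in keeping with the spirit of your unified formula, run the same algebra directly for $\det A=-1$: with $w=M_A(z)=(a\bar z+b)/(c\bar z+d)$ one has $\im w=\im z/|c\bar z+d|^{2}$, and $q'w-p'=-(q\bar z-p)/(c\bar z+d)$, so $|q'w-p'|^{2}=|qz-p|^{2}/|c\bar z+d|^{2}$, giving $\im w=|q'w-p'|^{2}\iff\im z=|qz-p|^{2}$ exactly as before, with no separate reduction step needed.
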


\begin{proof}
  This can of course be shown by direct calculation. To simplify the
  calculations, note that every isometry of $H^{2}$ can be represented
  as a composition of isometries of the following types:
  \begin{equation}\label{eq:3moeb}
    z\mapsto z+b,\quad 
    z\mapsto \lambda z,\quad 
    z\mapsto-\bar{z},\quad
    z\mapsto \frac{1}{\bar{z}}
  \end{equation}
  (where $b\in\R$, $\lambda\in\R_{>0}$). The corresponding normalized matrices are
  \begin{equation}
    \label{eq:3mat}
    \begin{pmatrix}
      1 & b \\ 0 & 1
    \end{pmatrix},\quad
    \begin{pmatrix}
      \lambda^{\frac{1}{2}} & 0 \\ 0 & \lambda^{-\frac{1}{2}}
    \end{pmatrix},\quad
    \begin{pmatrix}
      -1 & 0 \\ 0 & 1
    \end{pmatrix},\quad
    \begin{pmatrix}
      0 & 1 \\ 1 & 0
    \end{pmatrix}.
  \end{equation}
  (The first two maps preserve orientation, the other two reverse it.)
  It is therefore enough to do the simpler calculations for these
  maps. (For the inversion, Fig.~\ref{fig:inversion} indicates an
  alternative geometric argument, just for fun.)
  \begin{figure}
    \labellist
    \small\hair 3pt
    \pinlabel {$\displaystyle 0\vphantom{\frac{q}{p}}$} [t] at 78 11
    \pinlabel {$\displaystyle\frac{q}{p}$} [t] at 125 11
    \pinlabel {$\displaystyle 1\vphantom{\frac{q}{p}}$} [t] at 150 10
    \pinlabel {$\displaystyle\frac{p}{q}$} [t] at 189 11
    \pinlabel {\footnotesize$\frac{1}{2p^{2}}$} [l] at 122 19
    \pinlabel {$\frac{1}{2q^{2}}$} [l] at 189 28
    \endlabellist
    \centering
    \includegraphics[scale=1.0]{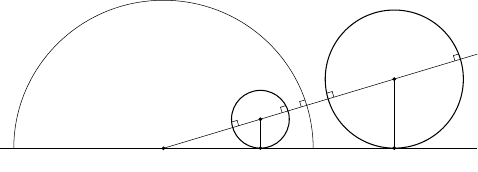}
    \medskip
    \caption{Horocycle $h(p,q)$ and image under inversion
      $z\mapsto\frac{1}{\bar z}$}
    \label{fig:inversion}
  \end{figure}
\end{proof}

\section{Signed distance of two horocycles}
\label{sec:dist_h_h}

The \emph{signed distance $d(h_{1},h_{2})$} of horocycles $h_{1}$,
$h_{2}$ is defined as follows (see Fig.~\ref{fig:dist_h_h}):
\begin{figure}
  \labellist
  \small\hair 3pt 
  \pinlabel {$d>0$} [b] <2pt,1pt> at 83 41 
  \pinlabel {$d<0$} [b] at 218 21
  \endlabellist
  \centering
  \includegraphics[scale=1.0]{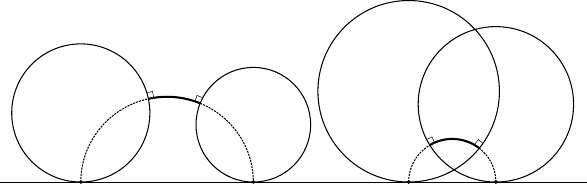}
  \caption{The signed distance of horocycles}
  \label{fig:dist_h_h}
\end{figure}
\begin{compactitem}
\item If $h_{1}$ and $h_{2}$ are centered at different points and do
  not intersect, then~$d(h_{1},h_{2})$ is the length of the geodesic
  segment connecting the horocycles and orthogonal to both. (This is
  just the hyperbolic distance between the horocycles.)
\item If $h_{1}$ and $h_{2}$ do intersect, then $d(h_{1},h_{2})$ is
  the length of that geodesic segment, taken negative. (If $h_{1}$
  and $h_{2}$ are tangent, then $d(h_{1},h_{2})=0$.)
\item If $h_{1}$ and $h_{2}$ have the same center, then
  $d(h_{1},h_{2})=-\infty$.
\end{compactitem}

\begin{remark}
  If horocycles $h_{1}$, $h_{2}$ have the same center, they are
  equidistant curves with a well defined finite distance. But their
  signed distance is defined to be $-\infty$. Otherwise, the map
  $(h_{1},h_{2})\mapsto d(h_{1},h_{2})$ would not be continuous on the
  diagonal.
\end{remark}

\begin{proposition}[Signed distance of horocycles]
  \label{prop:dhh}
  The signed distance of two horocycles $h_{1}=h(p_{1},q_{1})$ and
  $h_{2}=h(p_{2},q_{2})$ is
  \begin{equation}
    \label{eq:dhh}
    d(h_{1},h_{2})=2\log|p_{1}q_{2}-p_{2}q_{1}|.
  \end{equation}
\end{proposition}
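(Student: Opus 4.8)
The plan is to exploit the $\PGLTR$-equivariance established in Proposition~\ref{prop:equivariant}, which lets us move any configuration of two horocycles to a convenient normal form while keeping track of both sides of \eqref{eq:dhh}. First I would check that the right-hand side, $2\log|p_1q_2-p_2q_1| = 2\log\bigl|\det\bigl(\begin{smallmatrix}p_1 & p_2\\ q_1 & q_2\end{smallmatrix}\bigr)\bigr|$, transforms correctly: if $A\in\GLTR$ with $\det A=\pm 1$, then replacing $(p_i,q_i)$ by $A(p_i,q_i)^{\!\top}$ multiplies the $2\times 2$ determinant by $\det A=\pm 1$, hence leaves $|p_1q_2-p_2q_1|$ unchanged. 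Since the left-hand side $d(h_1,h_2)$ is also invariant under the isometry $M_A$, it suffices to verify the identity for one representative in each $\PGLTR$-orbit of ordered pairs of horocycles.

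Next I would reduce to normal form. Given two distinct horocycles, I can apply an isometry so that $h_2$ becomes the horizontal horocycle at height $1$, i.e.\ $h_2 = h(1,0)$, so $(p_2,q_2)=(1,0)$ up to sign. Then $h_1$ is some horocycle centered at a finite real point $\frac{p_1}{q_1}$ with $q_1\neq 0$; composing with a horizontal translation $z\mapsto z+b$ (which fixes $h(1,0)$ and acts on the vector by $\bigl(\begin{smallmatrix}1&b\\0&1\end{smallmatrix}\bigr)$) I can move the center to $0$, so $h_1=h(0,q_1)=h(0,1/\sqrt{t})$ is the horocycle at $0$ of euclidean diameter $t=1/q_1^{2}$. (Here $q_1$ can be any nonzero real, because the translation by $b$ changes $p_1$ by $bq_1$ but not $q_1$; so WLOG $p_1=0$.) The case where $h_1$ and $h_2$ share a center is handled separately: then $p_1q_2-p_2q_1=0$, the right-hand side is $-\infty$, matching the definition of $d$.

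In this normal form the computation is a one-line exercise in the upper half-plane: the geodesic orthogonal to both $h_2$ (the line $\im z=1$) and $h_1$ (the circle through $0$ and $it$, of diameter $t$) is the imaginary axis, which meets $h_1$ at $it$ and $h_2$ at $i$; by the distance formula on a vertical geodesic recalled in Section~\ref{sec:hyperbolic}, the signed distance is $\log\frac{1}{t} = -\log t = 2\log\frac{1}{\sqrt{t}} = 2\log|q_1| = 2\log|p_1q_2-p_2q_1|$, where the sign works out because the horocycles are disjoint precisely when $t<1$, i.e.\ $|q_1|>1$, i.e.\ the logarithm is positive. Finally one checks that the formula has the right value $2\log 1 = 0$ when the two horocycles are tangent ($t=1$) and is negative when they cross ($t>1$), consistent with the sign convention in the definition of $d$.

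The only real obstacle is bookkeeping the sign and the factor of $2$ through the normalization, and confirming that the normal-form reduction genuinely covers all orbits — in particular that any ordered pair of horocycles with distinct centers can be brought to the pair $\bigl(h(0,q_1),\,h(1,0)\bigr)$ by an isometry realized by a matrix of determinant $\pm 1$, which follows from the transitivity of $\PSLTR$ on horocycles together with the stabilizer computation above. Everything else is the elementary vertical-geodesic distance formula.
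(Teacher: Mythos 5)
Your proof is correct and takes essentially the same approach as the paper: both sides are reduced via the equivariance of Proposition~\ref{prop:equivariant} to a configuration with one horocycle centered at $\infty$, where the distance is read off along a vertical geodesic. You are slightly more explicit about the $\GLTR$-invariance of the right-hand side and normalize a bit further (to $h(1,0)$ and $h(0,q_1)$ rather than just ``one horocycle at $\infty$''), but the underlying idea and the key computation are identical to the paper's.
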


\begin{proof}
  It is easy to derive equation~\eqref{eq:dhh} if one horocycle is
  centered at $\infty$ (see Fig.~\ref{fig:hplane}). To prove the
  general case, apply the hyperbolic isometry
  \begin{equation*}
    M_{A}(z)=\frac{1}{z-\frac{p_{1}}{q_{1}}},
    \qquad
    A= 
    \begin{pmatrix}
      0 & 1 \\
      1 & -\frac{p_{1}}{q_{1}}
    \end{pmatrix}
  \end{equation*}
  that maps one horocycle center to $\infty$ and use
  Proposition~\ref{prop:equivariant}.
\end{proof}

\section{Ford circles and Farey tessellation}
\label{sec:farey}

Figure~\ref{fig:integer_cycles}
\begin{figure}
  \centering
  \labellist
  \footnotesize\hair 3pt
  \pinlabel {$-1$} [t] at 16 5
  \pinlabel {$0$} [t] at 100 5
  \pinlabel {$1$} [t] at 183 5
  \pinlabel {$2$} [t] at 267 5
  \pinlabel {$\frac{1}{2}$} [t] at 142 5
  \pinlabel {$\frac{1}{3}$} [t] at 128 5
  \pinlabel {$\frac{2}{3}$} [t] at 156 5
  \pinlabel {$\frac{1}{4}$} [t] at 121 5
  \pinlabel {$\frac{3}{4}$} [t] at 162.5 5
  \pinlabel {$\frac{1}{5}$} [t] at 116.75 5
  \pinlabel {$\frac{2}{5}$} [t] at 133.5 5
  \pinlabel {$\frac{3}{5}$} [t] at 150 5
  \pinlabel {$\frac{4}{5}$} [t] at 167 5
  \endlabellist
  \includegraphics{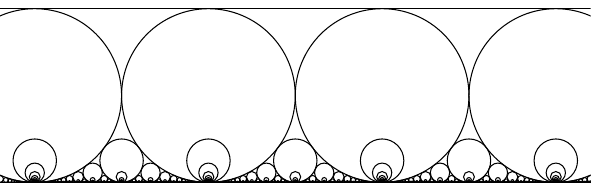}
  \vspace{5pt}
  \caption{Horocycles $h(p,q)$ with integer parameters $(p,q)\in\Z^{2}$}
  \label{fig:integer_cycles}
  \bigskip
  \labellist
  \footnotesize\hair 3pt
  \pinlabel {$-1$} [t] at 16 5
  \pinlabel {$0$} [t] at 100 5
  \pinlabel {$1$} [t] at 183 5
  \pinlabel {$2$} [t] at 267 5
  \pinlabel {$\frac{1}{2}$} [t] at 142 5
  \pinlabel {$\frac{1}{3}$} [t] at 128 5
  \pinlabel {$\frac{2}{3}$} [t] at 156 5
  \pinlabel {$\frac{1}{4}$} [t] at 121 5
  \pinlabel {$\frac{3}{4}$} [t] at 162.5 5
  \pinlabel {$\frac{1}{5}$} [t] at 116.75 5
  \pinlabel {$\frac{2}{5}$} [t] at 133.5 5
  \pinlabel {$\frac{3}{5}$} [t] at 150 5
  \pinlabel {$\frac{4}{5}$} [t] at 167 5
  \endlabellist
  \includegraphics{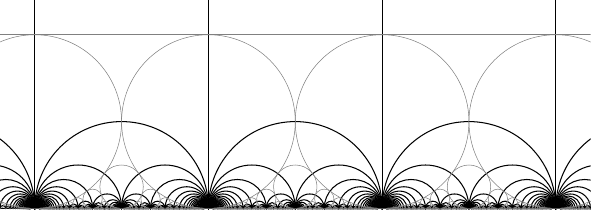}
    \vspace{5pt}
  \caption{Ford circles and Farey tessellation}
  \label{fig:farey}
\end{figure}
shows the horocycles $h(p,q)$ with integer parameters
$(p,q)\in\Z^{2}$. There is an infinite family of such integer
horocycles centered at each rational number and at $\infty$. (Only
the lowest horocycle centered at $\infty$ is shown to save space.)
Integer horocycles $h(p_{1},q_{1})$ and $h(p_{2},q_{2})$ with
different centers $\frac{p_{1}}{q_{1}}\not=\frac{p_{2}}{q_{2}}$ do not
intersect. This follows from Proposition~\ref{prop:dhh}, because
$p_{1}q_{2}-p_{2}q_{1}$ is a non-zero integer. They touch if and only
if $p_{1}q_{2}-p_{2}q_{1}=\pm 1$. This can happen only if both
$(p_{1},q_{1})$ and $(p_{2},q_{2})$ are coprime, that is,
if~$\frac{p_{1}}{q_{1}}$ and~$\frac{p_{2}}{q_{2}}$ are reduced
fractions representing the respective horocycle centers.

Figure~\ref{fig:farey} shows the horocycles $h(p,q)$ with integer and
coprime parameters~$(p,q)$. They are called \emph{Ford circles}. There
is exactly one Ford circle centered at each rational number and at
$\infty$. If one connects the ideal centers of tangent Ford circles
with geodesics, one obtains the \emph{Farey tessellation}, which is
also shown in the figure. The Farey tessellation is an ideal
triangulation of the hyperbolic plane with vertex set
$\Q\cup\{\infty\}$. (A thorough treatment can be found
in~\cite{bonahon09}.)

We will see that Markov triples correspond to ideal triangulations of
the hyperbolic plane (as universal cover of the modular torus), and
$(1,1,1)$ corresponds to the Farey tessellation
(Sec.~\ref{sec:ideal_triang}). The Farey tessellation also comes up
when one considers the minima of \emph{definite} quadratic forms
(Sec.~\ref{sec:point}).

\section{Signed distance of a horocycle and a geodesic}
\label{sec:dhg}

For a horocycle $h$ and a geodesic $g$, the \emph{signed distance}
$d(h,g)$ is defined as follows (see Fig.~\ref{fig:dist_h_g}):
\begin{figure}
  \labellist
  \small\hair 2pt
  \pinlabel {$h$} [b] at 27 50
  \pinlabel {$g$} [b] at 108 36
  \pinlabel {$h$} [b] at 228 73
  \pinlabel {$g$} [br] at 181 32
  \pinlabel {$d>0$} [b] <4pt,2pt> at 64 31
  \pinlabel {$d<0$} [br] <1pt,2pt> at 251 42
  \pinlabel {$x_{1}$} [t] <2pt, 0pt> at 71 0
  \pinlabel {$x_{2}$} [t] <2pt, 0pt> at 143 0
  \pinlabel {$x_{1}$} [t] <2pt, 0pt> at 165 0
  \pinlabel {$x_{2}$} [t] <2pt, 0pt> at 256 0
  \endlabellist
  \centering
  \vspace{10pt}
  \includegraphics[scale=1.0]{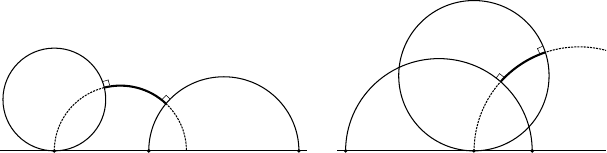}
  \caption{The signed distance $d=d(h,g)$ of a horocycle $h$ and a geodesic~$g$}
  \label{fig:dist_h_g}
\end{figure}

\smallskip
\begin{compactitem}
\item If $h$ and $g$ do not intersect, then~$d(h,g)$ is the length of
  the geodesic segment connecting $h$ and $g$ and orthogonal to
  both. (This is just the hyperbolic distance between $h$ and $g$.)
\item If $h$ and $g$
  do intersect, then $d(h,g)$ is the length of that
  geodesic segment, taken negative.
\item If $h$ and $g$ are tangent then $d(h,g)=0$. 
\item If $g$ ends in the center of $h$ then $d(h,g)=-\infty$.
\end{compactitem}
\smallskip

An equation for the signed distance to a vertical geodesic is
particularly easy to derive:

\begin{proposition}[Signed distance to a vertical geodesic]
  \label{prop:dhv}
  Consider a horocycle $h=h(p,q)$ with $q\not=0$ and a vertical
  geodesic $g$ from~$x\in\R$ to~$\infty$. Their signed distance is
  \begin{equation}
    \label{eq:dhv}
    d(h,g)=\log\Big(2q^{2}\Big|x-\frac{p}{q}\Big|\Big).
  \end{equation}
\end{proposition}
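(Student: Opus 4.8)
The plan is to reduce the general case to the easily computable special case of a horocycle centered at $\infty$, using the equivariance of the horocycle assignment under $\PGLTR$ (Proposition~\ref{prop:equivariant}) together with the fact that horocycles centered at the same ideal point are equidistant curves (noted at the end of Section~\ref{sec:hyperbolic}). First I would dispose of the simple case: if $h$ is a horocycle centered at $\infty$, say at euclidean height $y_0$, and $g$ is the vertical geodesic from $x\in\R$ to $\infty$, then $g$ actually ends in the center of $h$, so $d(h,g)=-\infty$; this degenerate situation does not arise here since our $h=h(p,q)$ has $q\neq 0$ and hence is centered at the finite point $p/q\neq\infty$. The genuinely useful special case is: if $h$ is a horocycle of euclidean diameter $\delta$ centered at a finite point $c\in\R$, and $g$ is a vertical geodesic ending at $\infty$ and at some finite point $a\neq c$, then the distance between them is computed along a single vertical line — one moves the picture so that the relevant geometry is visibly that of Figure~\ref{fig:hplane}.

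The key computational step I would carry out is this. Apply the horizontal translation $z\mapsto z-x$, which is the isometry $M_A$ for $A=\left(\begin{smallmatrix}1&-x\\0&1\end{smallmatrix}\right)$. This sends $g$ to the vertical geodesic from $0$ to $\infty$, i.e. the positive imaginary axis, and by Proposition~\ref{prop:equivariant} it sends $h(p,q)$ to $h(p-xq,q)$, a horocycle centered at $\frac{p}{q}-x$ with euclidean diameter $\frac{1}{q^2}$. Now I claim the signed distance between the imaginary axis and a horocycle of euclidean diameter $\delta$ centered at a point $c>0$ on the real axis equals $\log\frac{2c}{\delta}$ (and the analogous formula with $|c|$ if $c<0$). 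To see this, observe that the geodesic through $0$, $\infty$ and the geodesic through $c$ and $2c$ — wait, more directly: the common perpendicular from the imaginary axis to this horocycle is itself a geodesic. Rather than hunting for it, I would instead invert via $z\mapsto \frac{1}{\bar z}$ (the isometry in \eqref{eq:3mat}), which fixes the imaginary axis setwise and, by Proposition~\ref{prop:equivariant}, sends $h(p-xq,q)$ to $h(q,\,p-xq)$, a horocycle centered at $\frac{q}{p-xq}$ with euclidean diameter $\frac{1}{(p-xq)^2}$ — still not obviously centered at $\infty$. So the cleanest route is simply to use the standard fact, readable off Figure~\ref{fig:hplane}, that the hyperbolic distance from the imaginary axis to the horocycle of euclidean diameter $\delta$ at $c>0$ is $\log\frac{2c}{\delta}$: indeed the point of the horocycle closest to the axis, and the foot of the perpendicular, can be found by noting the perpendicular geodesic is the euclidean half-circle centered at $0$ of some radius $\rho$ meeting the horocycle orthogonally, and an elementary euclidean computation (intersecting circles, using that orthogonality of the two euclidean circles means $\rho^2 = $ (distance of centers)$^2-(\delta/2)^2 = c^2+\ldots$) pins down the endpoints and gives the length as $\log\frac{2c}{\delta}$, negative exactly when $\delta>2c$, i.e. when the horocycle meets the axis.

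Assembling the pieces: with $c = \frac{p}{q}-x$ and $\delta=\frac{1}{q^2}$, the special-case formula gives
\begin{equation*}
  d(h,g) = \log\frac{2|c|}{\delta} = \log\Bigl(2\,\Bigl|\tfrac{p}{q}-x\Bigr|\,q^2\Bigr),
\end{equation*}
which is exactly \eqref{eq:dhv}, and the sign convention matches because $h$ and $g$ intersect precisely when the translated horocycle meets the imaginary axis, i.e. when $\frac{1}{q^2} > 2|\frac{p}{q}-x|$, equivalently when the argument of the logarithm is less than $1$. Since $M_A$ is an isometry it preserves the (signed) distance, so the computation downstairs equals $d(h,g)$ upstairs.

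The main obstacle is the elementary euclidean sub-computation establishing the special case $d = \log\frac{2|c|}{\delta}$ with the correct sign. It is entirely routine — intersect two mutually orthogonal euclidean circles and measure a hyperbolic length along a vertical or circular geodesic — but it must be done carefully enough to justify the sign convention (distance taken negative when the curves cross), and one should double-check the edge behavior: as $x\to p/q$ we get $c\to 0$ and $d\to-\infty$, consistent with the bullet ``if $g$ ends in the center of $h$ then $d(h,g)=-\infty$'' once one notes that in the limit the vertical geodesic would end at the horocycle's center. An alternative to the euclidean computation, which I would mention, is to further normalize by a hyperbolic scaling $z\mapsto \lambda z$ so that $\delta$ becomes, say, $1$, reducing to a one-parameter family and a single integral $\int \frac{dy}{y}$ along the common perpendicular; but either way the content is the same short calculation.
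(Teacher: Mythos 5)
Your proposal is correct in substance, and the identity $d=\log(2|c|/\delta)$ that you aim for (imaginary axis vs.\ horocycle of euclidean diameter $\delta$ centered at $c$) is indeed what is needed; once that is established, the translation by $z\mapsto z-x$ and Proposition~\ref{prop:equivariant} finish the job exactly as you say. But your route is more laborious than the one the paper's Figure~\ref{fig:vertical} encodes, and you leave the one non-trivial step as ``routine'' rather than carrying it out. Notice that you quote, in your opening sentence, the remark at the end of Sec.~\ref{sec:hyperbolic} that concentric horocycles are equidistant curves --- and then never actually use it. That remark is the whole trick: introduce the auxiliary horocycle $h'$ centered at $p/q$ and tangent to the vertical geodesic $g$; its euclidean diameter is $2|x-p/q|$, since the euclidean distance from its center $\big(\tfrac{p}{q},\,|x-\tfrac{p}{q}|\big)$ to the vertical line through $x$ equals its euclidean radius. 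The common perpendicular from $g$ to $h$ lies on a geodesic ending at $p/q$ (geodesics orthogonal to a horocycle are exactly those ending at its center), and its endpoints are the tangency point of $h'$ on $g$ and the point where that geodesic meets $h$. Since concentric horocycles are equidistant, this length may be read off along the vertical geodesic through $p/q$, where $h$ and $h'$ sit at euclidean heights $1/q^2$ and $2|x-p/q|$, giving $d=\log\big(2q^2|x-\tfrac{p}{q}|\big)$ immediately, with the sign automatically correct in both the disjoint case ($h'$ larger than $h$) and the intersecting case ($h'$ smaller). Your alternative does work --- the perpendicular geodesic downstairs is the half-circle of radius $|c|$ centered at $0$, and integrating $d\theta/\sin\theta$ from the intersection angle to $\pi/2$ gives $\log(2|c|/\delta)$ after a short but real computation with $\tan(\theta/2)$ --- but it replaces a one-line observation with several lines of circle-intersection algebra, which is the part you conspicuously skipped.
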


\begin{proof}
  See Fig.~\ref{fig:vertical}. 
\end{proof}

\begin{figure}
  \labellist
  \small\hair 3pt 
  \pinlabel {$x$} [t] at 73 10 
  \pinlabel {$\displaystyle\frac{p}{q}$} [t] at 126 10 
  \pinlabel {$d$} [b] at 85 60 
  \pinlabel {$d$} [l] at 126 93 
  \pinlabel {$\displaystyle \frac{1}{q^{2}}$} [l] at 186 71 
  \pinlabel {$2\big|x-\frac{p}{q}\big|$} [l] at 186 115
  \pinlabel {$g$} [r] at 74 119
  \pinlabel {$h$} [tr] at 152 58
  \endlabellist
  \centering
  \includegraphics[width=150pt]{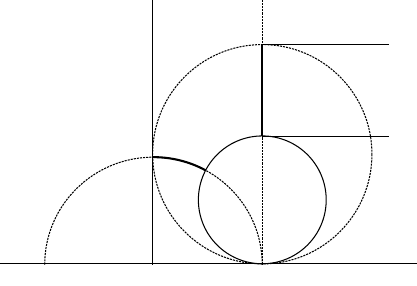}%
  \medskip
  \caption{Signed distance of horocycle $h=h(p,q)$ and vertical geodesic~$g$}
  \label{fig:vertical}
\end{figure}

Equation~\eqref{eq:dhv} suggests a geometric interpretation of
Hurwitz's theorem and the Diophantine approximation version of
Markov's theorem: A fraction $\frac{p}{q}$ satisfies
inequality~\eqref{eq:lambda} if and only if
\begin{equation}
  d\big(h(p,q),g\big)<-\log\frac{\lambda}{2}\,.
\end{equation}
The following section contains a proof of Hurwitz's theorem based on
this observation. An equation for the signed distance to a general
geodesic will be presented in Proposition~\ref{prop:dhgf}.

\section{Proof of Hurwitz's theorem}
\label{sec:hurwitz}

Let $x$ be an irrational number and let $g$ be the vertical geodesic
from $x$ to $\infty$. By Proposition~\ref{prop:dhv}, part (i) of
Hurwitz's theorem is equivalent to the statement: 

Infinitely many Ford circles $h$ satisfy
\begin{equation}
  \label{eq:hurwitz_geo}
  d(h,g)<-\log\frac{\sqrt{5}}{2}\,.
\end{equation}

This follows from the following lemma. Let us say that the
\emph{midpoint} of an edge of the Farey tessellation is the point
where the horocycles centered at its ends meet (see
Fig.~\ref{fig:farey}). Accordingly, we say that a geodesic
\emph{bisects} an edge of the Farey tessellation if it passes through
the midpoint of the edge (see Fig.~\ref{fig:furthestgeo1}).

\begin{lemma}
  \label{lem:hurwitz_farthest}
  Suppose a geodesic $g$ crosses an ideal triangle $T$ of the Farey
  tessellation. If $g$ is one of the three geodesics bisecting two
  sides of $T$, then
  \begin{equation*}
    d(h,g)=-\log\frac{\sqrt{5}}{2}
  \end{equation*}
  for all three Ford circles $h$ at the vertices of $T$.  Otherwise,
  inequality~\eqref{eq:hurwitz_geo} holds for at least one of these
  three Ford circles.
\end{lemma}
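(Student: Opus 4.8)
The plan is to normalize the configuration using the symmetry of the Farey tessellation and then reduce the statement to a single explicit computation. By Proposition~\ref{prop:equivariant} (equivariance) and the fact that the Farey tessellation is $\PSLTZ$-invariant, the isometry group of $H^2$ acts transitively on the ideal triangles of the Farey tessellation and, moreover, on triples of pairwise tangent Ford circles — any two such triples are congruent. So I may assume $T$ is the ideal triangle with vertices at $0$, $1$, and $\infty$, decorated with the Ford circles $h(0,1)$ (diameter $1$ at $0$), $h(1,1)$ (diameter $1$ at $1$), and $h(1,0)$ (the horizontal line at height $1$). These three horocycles are pairwise tangent, with contact points at $\tfrac12$ (between $h(0,1)$ and $h(1,1)$), and at the points above $0$ and above $1$ at height $1$ (between $h(1,0)$ and each of the other two).

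Next I would set up coordinates for an arbitrary geodesic $g$ crossing $T$. Since $g$ must enter and exit through two of the three sides, and by the $3$-fold symmetry of $T$ (the rotation cyclically permuting the three vertices is realized by an isometry preserving $T$ and permuting the three Ford circles), it suffices to treat geodesics that cross the two sides adjacent to the vertex $\infty$ — that is, geodesics with both endpoints in the interval $(0,1)$, with one endpoint in $(0,\tfrac12)$-ish region and handled symmetrically. Actually the cleanest route: parametrize $g$ by its two real endpoints $u < v$ with $0 \le u < v \le 1$ and compute $d(h,g)$ for $h = h(1,0)$, the horizontal horocycle at height $1$, using an explicit formula for the signed distance of a horocycle centered at $\infty$ and a semicircular geodesic from $u$ to $v$ (derivable exactly as in Proposition~\ref{prop:dhv}: the euclidean radius of the semicircle is $\tfrac{v-u}{2}$, so the distance works out to $d(h(1,0),g) = \log\tfrac{2}{v-u}$, or its negative once the horocycle dips below the top of the semicircle). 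I would get companion formulas for $d(h(0,1),g)$ and $d(h(1,1),g)$ either by direct computation or by applying the isometries $z\mapsto \tfrac{1}{z}$-type maps that cycle the vertices and transform the endpoint pair $(u,v)$ accordingly.

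With these three explicit functions of $(u,v)$ in hand, the lemma becomes the claim: $\max\{d(h(0,1),g),\, d(h(1,1),g),\, d(h(1,0),g)\} \le -\log\tfrac{\sqrt5}{2}$, with equality exactly when $g$ is one of the three side-bisectors, i.e.\ when $\{u,v\}$ is one of the three specific endpoint pairs corresponding to those geodesics (for the bisector of the two sides at $\infty$, $g$ passes through $\tfrac12 + \tfrac{i}{2}$, the contact point, which forces a specific relation between $u$ and $v$). After reducing by the $3$-fold symmetry to, say, the region where $d(h(1,0),g)$ is the largest of the three, this is a one- or two-variable inequality that I would verify by elementary calculus — finding the critical point of the relevant expression and checking the value there is $-\log\tfrac{\sqrt5}{2}$, equivalently that a certain quadratic in the endpoint data has the golden-ratio-flavored extremum $\tfrac{2}{\sqrt5}$ for $2q^2|x-p/q|$.

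The main obstacle I anticipate is bookkeeping rather than conceptual: correctly matching up the three endpoint-pair regions with the three Ford circles and the three bisectors, keeping the signs straight in the signed-distance formula (whether the horocycle pokes inside the geodesic's semicircle or not changes the sign), and confirming that the maximum of the three signed distances is attained precisely at the symmetric "balanced" configuration. The appearance of $\sqrt5$ should emerge transparently from the extremal condition, and the "otherwise" clause — that at least one Ford circle satisfies the strict inequality whenever $g$ is not a bisector — follows because the common value $-\log\tfrac{\sqrt5}{2}$ is the value of the maximum only at the three isolated bisecting geodesics, and elsewhere the maximum is strictly smaller. (As the introduction suggests, it is worth first solving the analogous euclidean problem — how far a line crossing a triangle can stay from all three vertices — to see that the extremal line bisects two sides; the hyperbolic computation here is the decorated-horocycle analogue of that fact.)
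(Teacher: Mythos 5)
Your high-level strategy---normalize to the ideal triangle $T$ with vertices $0$, $1$, $\infty$, parametrize the crossing geodesics by their real endpoints, write down the three signed distances explicitly, and optimize---is precisely the ``easy to prove independently'' route that the paper's one-line proof alludes to; the paper's primary proof simply cites the general Propositions~\ref{prop:crossing1} and~\ref{prop:crossing2} on geodesics crossing a decorated ideal triangle, so a self-contained computation is legitimate and in the same spirit as the paper's hint.

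There is, however, a concrete error in your parametrization that would derail the computation, and it is more than bookkeeping. You assert that geodesics crossing the two sides of $T$ adjacent to $\infty$ (the vertical lines $x=0$ and $x=1$) have ``both endpoints in the interval $(0,1)$.'' This is backwards: a semicircle from $u$ to $v$ meets the line $x=0$ exactly when $u<0<v$ and meets $x=1$ exactly when $u<1<v$, so crossing both vertical sides forces $u<0$ and $v>1$. A semicircle with $0\le u<v\le 1$ lies entirely \emph{below} the bottom side of $T$ (the semicircle from $0$ to $1$) and does not cross $T$ at all; furthermore your own formula $d\bigl(h(1,0),g\bigr)=\log\frac{2}{v-u}$ would then give a value at least $\log 2>0$, so \eqref{eq:hurwitz_geo} would never be satisfied by $h(1,0)$ and the extremum would never be found. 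The paper's Fig.~\ref{fig:furthestgeo1} already shows the correct endpoints of the bisecting geodesic, $\tfrac12\pm\tfrac{\sqrt5}{2}$, which lie well outside $[0,1]$. Once you correct the range to $u<0<1<v$, the plan does go through: with $f(p,q)=(p-uq)(p-vq)$, Proposition~\ref{prop:dhgf} gives
\begin{equation*}
d\bigl(h(1,0),g\bigr)=\log\tfrac{2}{v-u},\quad
d\bigl(h(0,1),g\bigr)=\log\tfrac{2(-uv)}{v-u},\quad
d\bigl(h(1,1),g\bigr)=\log\tfrac{2(1-u)(v-1)}{v-u},
\end{equation*}
and at $u=\tfrac12-\tfrac{\sqrt5}{2}$, $v=\tfrac12+\tfrac{\sqrt5}{2}$ all three numerators $1$, $-uv$, $(1-u)(v-1)$ equal $1$ while $v-u=\sqrt5$, giving the common value $-\log\tfrac{\sqrt5}{2}$. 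One more small point: the $3$-fold symmetry you invoke should be made explicit---it is realized inside $\PSLTZ$ by $z\mapsto\frac{1}{1-z}$, which cycles $0\to 1\to\infty$ and permutes the three Ford circles---since this is what justifies restricting attention to a single pair of sides.
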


\begin{proof}[Proof of Lemma~\ref{lem:hurwitz_farthest}]
  This is the simplest case of Propositions~\ref{prop:crossing1} and~\ref{prop:crossing2}, and
  easy to prove independently. Note that it is enough to
  consider the ideal triangle $0$, $1$, $\infty$, and geodesics
  intersecting its two vertical sides (see
  Fig.~\ref{fig:furthestgeo1}).
\end{proof}

\begin{figure}
  \labellist
  \small\hair 2pt 
  \pinlabel {$\frac{1}{2}-\frac{\sqrt{5}}{2}$} at 5 0 
  \pinlabel {$0$} at 43 0 
  \pinlabel {$\frac{1}{2}$} at 76 0
  \pinlabel {$1$} at 105 0 
  \pinlabel {$\frac{1}{2}+\frac{\sqrt{5}}{2}=\Phi$} at 152 0 
  \pinlabel {$1$} [l] at 201 72 
  \pinlabel {$\frac{\sqrt{5}}{2}$} [l] at 203 81 
  \pinlabel {\scriptsize $d$} [r] at 73 76
  \pinlabel {$g_{1}$} [r] at 6 25
  \pinlabel {$g$} [l] at 142 93
  \pinlabel {$\frac{\sqrt{5}}{2}$} [l] at 53 52
  \endlabellist
  \centering
  \includegraphics[scale=1.0]{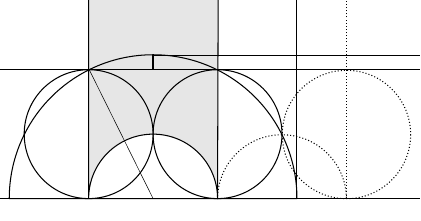}
  \caption{Geodesic $g_{1}$ bisecting the two vertical sides of the
    triangle $0,1,\infty$, and geodesic $g$ from $\Phi$ to $\infty$}
  \label{fig:furthestgeo1}
\end{figure}

To deduce part (i) of Hurwitz's theorem, note that since $x$ is
irrational, the geodesic $g$ from $x$ to $\infty$ passes through
infinitely many triangles of the Farey tessellation. For each of these
triangles, at least one of its Ford circles $h$
satisfies~\eqref{eq:hurwitz_geo}, by
Lemma~\ref{lem:hurwitz_farthest}. (The geodesic $g$ does not bisect
two sides of any Farey triangle. Otherwise, $g$ would bisect two sides
of all Farey triangles it enters; see Fig.~\ref{fig:furthestgeo1},
where the next triangle is shown with dashed lines. This contradicts
$g$ ending in the vertex $\infty$ of the Farey tessellation.) 

For consecutive triangles that $g$ crosses, the same horocycle may
satisfy~\eqref{eq:hurwitz_geo}. But this can happen only finitely many
times (otherwise $x$ would be rational), and then the geodesic will
never again intersect a triangle incident with this horocycle. Hence,
infinitely many Ford circles satisfy~\eqref{eq:hurwitz_geo}, and this
completes the proof of part (i).

To prove part (ii) of Hurwitz's theorem, we have to show that for
\begin{equation*}
  x=\Phi
  \quad\text{and}\quad 
  \epsilon>0,
\end{equation*}
only finitely many Ford circles $h$ satisfy 
\begin{equation}
  \label{eq:d_less_delta}
  d(h,g)<-\log\frac{\sqrt{5}}{2}-\epsilon,
\end{equation}
where $g$ is the geodesic from $\Phi$ to $\infty$. 

To this end, let $g_{1}$ be the geodesic from
$\Phi=\frac{1}{2}(1+\sqrt{5})$ to $\frac{1}{2}(1-\sqrt{5})$, see
Fig.~\ref{fig:furthestgeo1}. For every Ford circle $h$,
\begin{equation*}
  d(h,g_{1})\geq -\log\frac{\sqrt{5}}{2}. 
\end{equation*}
Indeed, the distance is equal to $-\log\frac{\sqrt{5}}{2}$ for all Ford
circles that $g_{1}$ intersects, and positive for all others.

Because the geodesics $g$ and $g_{1}$ converge at the common end
$\Phi$, there is a point $P\in g$ such that all Ford circles $h$
intersecting the ray from $P$ to $\Phi$ satisfy
\begin{equation*}
  |d(g,\Phi)-d(g_{1},\Phi)|<\epsilon,
\end{equation*}
and hence 
\begin{equation*}
  d(g,\Phi)\geq-\log\frac{\sqrt{5}}{2}-\epsilon.
\end{equation*}
On the other hand, the complementary ray of $g$, from $P$ to $\infty$,
intersects only finitely many Ford circles. Hence, only finitely many
Ford circles satisfy~\eqref{eq:d_less_delta}, and this completes the
proof of part (ii).

\begin{remark}
  \label{rem:vahlen}
  The gist of the above proof is deducing Hurwitz's theorem from the
  fact that the geodesic $g$ from an irrational number $x$ to $\infty$
  crosses infinitely many Farey triangles. A weaker statement follows
  from the observation that $g$ crosses infinitely many edges. Since
  each edge has two touching Ford circles at the ends, a crossing
  geodesic intersects at least one of them. Hence there are infinitely
  many fractions satisfying~\eqref{eq:lambda} with $\lambda=2$. In
  fact, at least one of any two consecutive continued fraction
  approximants satisfies this bound. This result is due to
  Vahlen~\cite[p.~41]{perron54} \cite{vahlen95}. The converse is due
  to Legendre~\cite{legendre30} and 65 years older: If a fraction
  satisfies~\eqref{eq:lambda} with $\lambda=2$, then it is a continued
  fraction approximant. A geometric proof using Ford circles is
  mentioned by Speiser~\cite{speiser23} (see Sec.~\ref{sec:intro}).
\end{remark}

\section{Dictionary: geodesic --- indefinite form}
\label{sec:geodesics}

We assign a geodesic $g(f)$ to every indefinite binary quadratic form
$f$ with real coefficients as follows: To the form $f$ with real
coefficients $A$, $B$, $C$ as in~\eqref{eq:f}, we assign the geodesic
$g(f)$ that connects the zeros of the polynomial~\eqref{eq:poly}. (If
$A=0$, one of the zeros is $\infty$, and $g(f)$ is a vertical
geodesic.) The map $f\mapsto g(f)$ from the space of indefinite forms
to the space of geodesics is
\begin{compactitem}
\item surjective and many-to-one: 
$g(f)=g(\tilde{f})\Leftrightarrow\tilde{f}=\mu f$ for some $\mu\in\R^{*}$.
\item equivariant with respect to the left $\GLTR$-actions: 
  \begin{equation*}
    \begin{tikzcd}[column sep=large]
      f \arrow[r, mapsto, "A"]\arrow[d, mapsto, "g"]
      \arrow[dr, start anchor=center,end
      anchor=center,draw=none,description]{dr}[description]{A\in\GLTR}
      & f\circ A^{-1} \arrow[d, mapsto, "g"] \\
      g(f) \arrow[r, mapsto, "M_{A}"]
      & M_{A}g(f)=g(f\circ A^{-1})
    \end{tikzcd}
  \end{equation*}
\end{compactitem}

\begin{proposition}
  \label{prop:dhgf}
  The signed distance of the horocycle $h(p,q)$ and the geodesic~$g(f)$ is
  \begin{equation}
    \label{eq:dhgf}
    d\big(h(p,q),g(f)\big)=\log\frac{|f(p,q)|}{\sqrt{-\det f}}\,.
  \end{equation}
\end{proposition}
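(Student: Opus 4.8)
The plan is to reduce the general case to the special case already handled in Proposition~\ref{prop:dhv} by exploiting the $\GLTR$-equivariance of both sides of~\eqref{eq:dhgf}. First I would observe that signed distance $d(h,g)$ is an isometry invariant, that $h\mapsto h(v)$ is $\PGLTR$-equivariant by Proposition~\ref{prop:equivariant}, and that $f\mapsto g(f)$ is equivariant as recorded in the diagram preceding the proposition. The right-hand side is also an invariant: under $f\mapsto f\circ A^{-1}$ and $(p,q)\mapsto A(p,q)^{T}$ with $\det A=\pm1$, the quantity $f(p,q)$ is unchanged and $\det f$ is unchanged (since $\det(f\circ A^{-1})=(\det A)^{-2}\det f=\det f$). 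Hence it suffices to verify~\eqref{eq:dhgf} after moving $g(f)$ by a convenient isometry.

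Next I would choose $A\in\GLTR$ so that $M_{A}$ sends $g(f)$ to a vertical geodesic. Concretely, if the roots of~\eqref{eq:poly} are $r_{1},r_{2}$, pick $A$ with $M_{A}(r_{1})=\infty$; then $g(f\circ A^{-1})$ is the vertical geodesic from $x=M_{A}(r_{2})$ to $\infty$. Writing $(p',q')=A(p,q)^{T}$, the left side of~\eqref{eq:dhgf} equals $d\bigl(h(p',q'),g'\bigr)$ where $g'$ is that vertical geodesic, and by Proposition~\ref{prop:dhv} this is $\log\bigl(2q'^{2}\lvert x-\tfrac{p'}{q'}\rvert\bigr)$ when $q'\neq0$. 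It remains to identify this with $\log\bigl(\lvert f'(p',q')\rvert/\sqrt{-\det f'}\bigr)$, where $f'=f\circ A^{-1}$ has roots $x$ and $\infty$, i.e.\ $f'(p,q)=A'q(p-xq)$ for some constant $A'$ (the leading coefficient), up to an overall scalar. Then $f'(p',q')=A'q'(p'-xq')$ and $\det f' = -A'^{2}x^{2}/\,$? — rather, for $f'(p,q)=A'q(p-xq)=A'pq-A'xq^{2}$ one has $A=0$, $2B=A'$, $C=-A'x$, so $\det f' = -B^{2}=-A'^{2}/4$ and $\sqrt{-\det f'}=\lvert A'\rvert/2$; hence $\lvert f'(p',q')\rvert/\sqrt{-\det f'}=2\lvert q'\rvert\,\lvert p'-xq'\rvert=2q'^{2}\lvert x-\tfrac{p'}{q'}\rvert$, matching exactly.

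I would then dispatch the degenerate subcases separately but briefly: if $q'=0$ (so $h(p',q')$ is centered at $\infty$, the other endpoint of $g'$), both sides are $-\infty$ in a way consistent with the stated conventions, since $f'(p',0)=0$; and if instead one prefers, one can choose $A$ sending the \emph{other} root to $\infty$ to avoid this. One should also note the case $A=0$ in the original $f$ is automatically covered since we reduced to a normal form anyway. Finally, I would remark that the same scalar-invariance argument shows the formula is independent of the choice of $A$ and of the normalization $\mu$ of $f$, as it must be.

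\textbf{Main obstacle.}
The conceptual content is light — it is the same ``normalize by an isometry'' trick used for Propositions~\ref{prop:dhh} and~\ref{prop:dhv}. The one place needing a little care is the bookkeeping of the leading coefficient and the determinant under the affine substitution: one must check that $\det(f\circ A^{-1})=\det f$ for $\det A=\pm1$ (an easy $2\times2$ computation, since conjugating the symmetric matrix of $f$ by $A^{-1}$ multiplies the determinant by $(\det A)^{-2}$), and that the factored form $f'(p,q)=A'\,q\,(p-xq)$ produces the determinant $-A'^{2}/4$ claimed above. Getting the power of $q$ versus $q^{2}$ right in reconciling $2\lvert q'\rvert\,\lvert p'-xq'\rvert$ with $2q'^{2}\lvert x-\tfrac{p'}{q'}\rvert$ is the only spot where a sign or exponent slip could occur, and it is routine.
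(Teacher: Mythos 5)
Your proof is correct, and it takes a genuinely different reduction than the paper's. You normalize the \emph{geodesic}: choose $A\in\GLTR$ sending one root of $f$ to $\infty$, so $g(f\circ A^{-1})$ is vertical, and then invoke Proposition~\ref{prop:dhv} directly, finishing by matching $2q'^{2}\lvert x-p'/q'\rvert$ with $\lvert f'(p',q')\rvert/\sqrt{-\det f'}$. The paper instead normalizes the \emph{horocycle}: it chooses $A$ with $A(p,q)^{T}=(\tilde p,0)^{T}$, so the horocycle becomes horizontal, and then verifies the formula for a horizontal horocycle against a general (semicircular) geodesic by noting that $p^{2}\sqrt{-\det f}/\lvert f(p,0)\rvert$ is the apex height. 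Both reductions rest on the same equivariance observation (that the left-hand side is invariant because $d$, $h(\cdot)$, and $g(\cdot)$ are all equivariant, and the right-hand side is invariant because $f(p,q)$ and $\det f$ are). Your route has the advantage of literally reusing Proposition~\ref{prop:dhv} rather than redoing a base-case computation, at the modest cost of one extra degenerate subcase ($q'=0$), which you handle correctly. One small slip in presentation: you call $A'$ ``the leading coefficient,'' but in your normal form $f'(p,q)=A'pq-A'xq^{2}$ the leading coefficient (of $p^{2}$) is $0$; your own coefficient bookkeeping a line later ($A=0$, $2B=A'$, $C=-A'x$) is correct and gives $\det f'=-A'^{2}/4$ as needed, so this is a naming error, not a mathematical one. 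Also worth stating explicitly, as the paper does implicitly, is the scaling invariance $g(\mu f)=g(f)$ together with the fact that the right-hand side of~\eqref{eq:dhgf} is unchanged under $f\mapsto\mu f$, which you allude to (``up to an overall scalar'') but could make cleaner.
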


\begin{proof}
  First, consider the case of horizontal horocycles ($q=0$). If $g(f)$
  is a vertical geodesic ($f(p,0)=0$), equation~\eqref{eq:dhgf} is
  immediate. Otherwise, note that
  ${p^{2}\sqrt{-\det f}}/{|f(p,0)|}$ is half the distance between
  the zeros~\eqref{eq:zeros}, hence the height of the geodesic.

  The general case reduces to this one: For any $A\in\GLTR$ with
  $|\det A|=1$ and $A\big(
  \begin{smallmatrix}
    p \\ q
  \end{smallmatrix}
  \big)
  =
  \big(
  \begin{smallmatrix}
    \tilde{p} \\ 0
  \end{smallmatrix}
  \big)
  $,
  \begin{equation*}
    \begin{split}
      d\big(h(p,q),g(f)\big)
      &=d\big(M_{A}h(p,q),M_{A}g(f)\big)
      =d\big(h(\tilde{p},0),g(f\circ A^{-1})\big)\\
      &=\log\frac{|(f\circ A^{-1})(\tilde{p},0)|}{\sqrt{-\det(f\circ
          A^{-1})}}
      =\log\frac{|f(p,q)|}{\sqrt{-\det f}}\,.\qedhere
    \end{split}
  \end{equation*}
  \end{proof}

Equation~\eqref{eq:dhgf} suggests a geometric interpretation of the
quadratic forms version of Markov's theorem, and it is easy to prove
most of Korkin~\& Zolotarev's theorem (just replace
inequality~\eqref{eq:korkin_otherwise} with $M(f)<\frac{2}{\sqrt{5}}$)
by adapting the proof of Hurwitz's theorem in Sec.~\ref{sec:hurwitz}.
To obtain the complete Markov theorem, more hyperbolic geometry is
needed. This this is the subject of the following sections.

\section{Decorated ideal triangles}
\label{sec:ideal_triang}

In this and the following section, we review some basic facts from
Penner's theory of decorated Teichm{\"u}ller
spaces~\cite{penner87,penner12}. The material of this section, up to
and including equation~\eqref{eq:c} is enough to treat crossing
geodesics in Sec.~\ref{sec:crossing}. Ptolemy's relation is needed for
the geometric interpretation of Markov's equation in
Sec.~\ref{sec:triangulations}.

An \emph{ideal triangle} is a closed region in the hyperbolic plane
that is bounded by three geodesics (the \emph{sides}) connecting three
ideal points (the \emph{vertices}). Ideal triangles have dihedral
symmetry, and any two ideal triangles are isometric. That is, for any
pair of ideal triangles and any bijection between their vertices,
there is a unique hyperbolic isometry that maps one to the other and
respects the vertex matching. A \emph{decorated ideal triangle} is an
ideal triangle together with a horocycle at each vertex
(Fig.~\ref{fig:ideal_triang}).
\begin{figure}
  \labellist
  \small\hair 2pt
  \pinlabel {$\alpha_{3}$} [t] at 64 45
  \pinlabel {$\alpha_{1}$} [bl] at 83 63
  \pinlabel {$\alpha_{2}$} [br] at 44 63
  \pinlabel {$c_{3}$} [t] at 64 78
  \pinlabel {$c_{1}$} [bl] at 33 44
  \pinlabel {$c_{2}$} [br] <-0.5pt, -1.5pt> at 97 44
  \pinlabel {$h_{1}$} [t] <1pt,0pt> at 15 51
  \pinlabel {$h_{2}$} [bl] <0pt, -2pt> at 98 28
  \pinlabel {$h_{3}$} [r] at 86 102
  \endlabellist
  \hfill
  \includegraphics{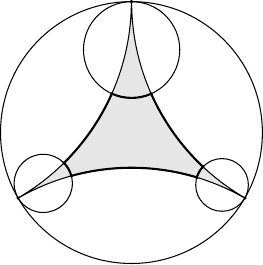}
  \labellist
  \small\hair 2pt
  \pinlabel {$\alpha_{3}$} [bl] <-0.5pt,-0.5pt> at 88 52
  \pinlabel {$\alpha_{1}$} [l] at 131 85
  \pinlabel {$\alpha_{2}$} [r] at 44 85
  \pinlabel {$c_{3}$} [t] at 90 118
  \pinlabel {$c_{1}$} [bl] <-2pt,-1pt> at 56 52
  \pinlabel {$c_{2}$} [br] <1.5pt, 1.5pt> at 124 42
  \pinlabel {$0$} [t] at 44 9
  \pinlabel {$1$} [t] at 131 9
  \pinlabel {$i$} [br] at 44 96
  \pinlabel {$1+i$} [bl] at 131 96
  \endlabellist
  \hfill
  \includegraphics{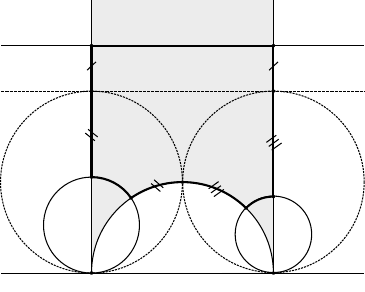}
  \hspace*{\fill}
  \caption{Decorated ideal triangle in the Poincar\'e disk model (left) and
    in the half-plane model (right)}
  \label{fig:ideal_triang}
\end{figure}

Consider a geodesic decorated with two horocycles $h_{1}$, $h_{2}$ at
its ends (for example, a side of an ideal triangle). Let the
\emph{truncated length} of the decorated geodesic be defined as the
signed distance of the horocycles (Sec.~\ref{sec:dist_h_h}),
\begin{equation*}
  \alpha = d(h_{1},h_{2}),
\end{equation*}
and let its \emph{weight} be defined as
\begin{equation*}
  a=e^{\alpha/2}.
\end{equation*}
(We will often use Greek letters for truncated lengths and Latin
letters for weights. The weights are usually called
\emph{$\lambda$-lengths}.)

Any triple $(\alpha_{1},\alpha_{2},\alpha_{3})\in\R^{3}$ of
truncated lengths, or, equivalently, any triple
$(a_{1},a_{2},a_{3})\in\R_{>0}^{3}$ of weights, determines a unique
decorated ideal triangle up to isometry.

Consider a decorated ideal triangle with truncated lengths
$\alpha_{k}$ and weights~$a_{k}$. Its horocycles intersect the
triangle in three finite arcs. Denote their hyperbolic lengths by
$c_{k}$ (see Fig.~\ref{fig:ideal_triang}). The truncated side lengths
determine the horocyclic arc lengths, and vice versa, via the relation
\begin{equation}
  \label{eq:c}
  c_{k}=\frac{a_{k}}{a_{i}a_{j}}=e^{\frac{1}{2}(-\alpha_{i}-\alpha_{j}+\alpha_{k})},
\end{equation}
where $(i,j,k)$ is a permutation of $(1,2,3)$. (For a proof,
contemplate Fig.~\ref{fig:ideal_triang}.)

Now consider a decorated ideal quadrilateral as shown in
Fig.~\ref{fig:ptolemy}. 
\begin{figure}
  \begin{minipage}{0.45\linewidth}
    \labellist
    \small\hair 2pt \pinlabel {$a$} [t] at 66 46 \pinlabel {$b$} [l]
    at 107 59 \pinlabel {$c$} [bl] at 94 78 \pinlabel {$d$} [br] at 45
    66 \pinlabel {$e$} [br] at 71 60 \pinlabel {$f$} [tr] <0pt,4pt> at
    89 55
    \endlabellist
    \centering
    \includegraphics{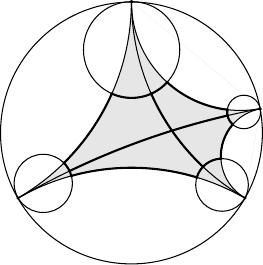}
    \caption{Ptolemy relation}
    \label{fig:ptolemy}
  \end{minipage}%
  \begin{minipage}{0.55\linewidth}
    \labellist
    \small\hair 2pt
    \pinlabel {$a$} [l] at 63 55
    \pinlabel {$a'$} [b] at 76 67
    \pinlabel {$b$} [tl] at 82 54
    \pinlabel {$b$} [br] at 43 72
    \pinlabel {$c$} [bl] at 84 88
    \pinlabel {$c$} [tr] at 42 41
    \endlabellist
    \centering
    \includegraphics{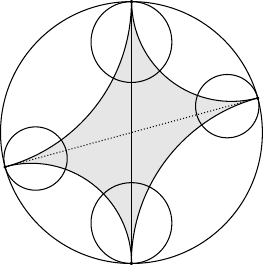}
    \caption{Triangulations $T$ and~$T'$ of a punctured torus}
    \label{fig:torus}    
  \end{minipage}
\end{figure}
It can be decomposed into two decorated ideal
triangles in two ways. The six weights $a$, $b$, $c$, $d$, $e$, $f$
are related by the Ptolemy relation
\begin{equation}
  \label{eq:ptolemy}
  ef=ac+bd.
\end{equation}
It is straightforward to derive this equation using the
relations~\eqref{eq:c}.

\section{Triangulations of the modular torus and Markov's equation}
\label{sec:triangulations}

In this section, we review Penner's~\cite{penner87,penner12}
geometric interpretation of Markov's equation~\eqref{eq:markov}, which
is summarized in Prop.~\ref{prop:triangs}. The involutions
$\sigma_{k}$ were defined in Sec.~\ref{sec:mostirrational}, see
equation~\eqref{eq:sigma_k}. The \emph{modular torus} is the orbit
space 
\begin{equation*}
  M=H^{2}/G, 
\end{equation*}
where~$G$ is the group of orientation preserving
hyperbolic isometries generated by
\begin{equation}
  \label{eq:AB}
  A(z) = \frac{z-1}{-z+2},\qquad B(z) = \frac{z+1}{z+2}. 
\end{equation}
Figure~\ref{fig:modular_t} shows a fundamental domain.
\begin{figure}
  \labellist
  \small\hair 2pt
  \pinlabel {$-1$} [t] at 36 8
  \pinlabel {$0$} [t] at 94 8
  \pinlabel {$1$} [t] at 152 8
  \pinlabel {$B$} [bl] at 65 97
  \pinlabel {$A$} [br] at 122 97
  \endlabellist
  \hfill
  \includegraphics{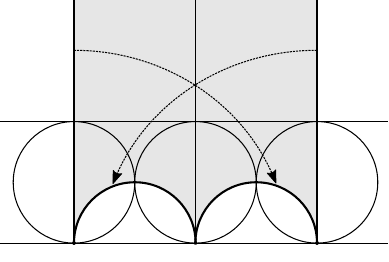}%
  \hspace*{\fill}
  \caption{The modular torus}
  \label{fig:modular_t}
\end{figure}
The group $G$ is the commutator subgroup of the modular group
$\PSLTZ$, and the only subgroup of $\PSLTZ$ that has a once punctured
torus as orbit space. It is a normal subgroup of $\PSLTZ$ with index
six, and the quotient group $\PSLTZ/G$ is the group of orientation
preserving isometries of the modular torus $M$. It is also symmetric
with respect to six reflections, so the isometry group has
in total twelve elements.

\begin{proposition}[Markov triples and ideal triangulations]
  \label{prop:triangs}
  (i) A triple $\tau=(a,b,c)$ of positive integers is a Markov triple
  if and only if there is an ideal triangulation of the decorated
  modular torus whose three edges have the weights $a$, $b$, and
  $c$. This triangulation is unique up to the $12$-fold symmetry of the
  modular torus.

  (ii) If $T$ is an ideal triangulation of the decorated modular torus
  with edge weights $\tau=(a,b,c)$, and if $T'$ is an ideal
  triangulation obtained from $T$ by performing a single edge flip,
  then the edge weights of $T'$ are $\tau'=\sigma_{k}\tau$, with
  $k\in\{1,2,3\}$ depending on which edge was flipped.
\end{proposition}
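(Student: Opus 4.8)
The plan is to construct the modular torus explicitly as a quotient of the hyperbolic plane and to build the correspondence between ideal triangulations and Markov triples starting from the Farey tessellation and propagating by edge flips. First I would recall that the modular torus is $H^2/\Gamma'$, where $\Gamma'$ is the commutator subgroup of $\PSLTZ$ (equivalently, the unique normal subgroup with quotient $\Z/6$ acting on the thrice-punctured sphere structure), a once-punctured torus with a complete hyperbolic metric of finite area, decorated by the horocycle at its cusp coming from the $\Gamma'$-invariant family of Ford circles. The projection of the Farey tessellation to $H^2/\Gamma'$ is an ideal triangulation with one vertex (the cusp), three edges, and two triangles; one computes that all three Ford circles descend to the same decorated horocycle, and using Proposition~\ref{prop:dhh} together with the fact that adjacent Ford circles in the Farey tessellation touch, every edge gets weight $e^{0}=1$, so the Farey tessellation yields the Markov triple $(1,1,1)$.

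Next I would analyze the edge flip. In the universal cover, flipping an edge of an ideal triangulation of the torus replaces the diagonal of a decorated ideal quadrilateral by the other diagonal; by the Ptolemy relation~\eqref{eq:ptolemy}, if the quadrilateral has sides $a_i,a_j,a_i,a_j$ (the two triangles adjacent to an edge of weight $a_k$ in a one-vertex triangulation are glued so that the quadrilateral's opposite sides are identified in pairs) and old diagonal $a_k$, then the new diagonal $a_k'$ satisfies $a_k a_k' = a_i a_j + a_i a_j$... here I must be careful: because of the identifications on the torus, the four sides of the lifted quadrilateral carry only the two weights $a_i$ and $a_j$, arranged so that Ptolemy gives $a_k a_k' = a_i^2 + a_j^2$, i.e. $a_k' = (a_i^2+a_j^2)/a_k$, which is exactly the substitution~\eqref{eq:a_prime} defining $\sigma_k$. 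This proves part~(ii) and, combined with the fact that any two ideal triangulations of a once-punctured torus are connected by a sequence of flips (a standard fact about the Farey/flip graph, equivalently the connectedness of the arc complex), shows that every triangulation produces a triple reachable from $(1,1,1)$ by a composition of the $\sigma_k$, hence a Markov triple. This gives the ``only if'' direction of~(i).

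For the ``if'' direction of part~(i): every Markov triple is obtained from $(1,1,1)$ by a composition of the involutions $\sigma_k$ (recalled in Sec.~\ref{sec:mostirrational} via the Markov tree), so applying the corresponding sequence of edge flips to the Farey triangulation produces an ideal triangulation of the decorated modular torus with the prescribed edge weights; one must check that each intermediate flip is geometrically legitimate, i.e. that the quadrilateral being flipped is strictly convex so that the flip yields an honest ideal triangulation again — this follows because all weights stay positive, and positivity of $\lambda$-lengths is exactly the condition for a decorated ideal triangle (hence, assembled, a decorated triangulation) to exist, by the parametrization statement preceding equation~\eqref{eq:c}. Finally, for uniqueness up to the $12$-fold symmetry group of the decorated modular torus (the order-$6$ conformal automorphism group of the once-punctured torus, doubled by the orientation-reversing symmetry), I would argue that an abstract ideal triangulation of a once-punctured torus is combinatorially unique, so two triangulations with the same ordered edge weights differ by a combinatorial automorphism, and such automorphisms are realized by isometries because a decorated ideal triangulation rigidifies the hyperbolic structure; counting these automorphisms gives $12$.

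The main obstacle I expect is the bookkeeping in the flip/Ptolemy step: getting the identifications of the lifted quadrilateral right so that its four sides carry weights $(a_i,a_j,a_i,a_j)$ rather than four distinct weights, which is what makes Ptolemy reproduce Markov's quadratic substitution $a_k'=(a_i^2+a_j^2)/a_k$ rather than a generic Ptolemy relation. A secondary subtlety is pinning down the symmetry group precisely and verifying it has order exactly $12$, and making sure the ``any two triangulations are flip-equivalent'' input is invoked in the correct (decorated, once-punctured-torus) setting.
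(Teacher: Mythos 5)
Your proof is correct and follows essentially the same route as the paper: Farey tessellation $\mapsto (1,1,1)$, Ptolemy relation $\mapsto \sigma_k$, connectedness of the flip graph for the ``only if'' direction, and the Markov tree for the ``if'' direction. Two small remarks. First, your worry about ``geometric legitimacy'' of a flip is a non-issue: in an ideal triangulation of a once-punctured torus every edge is adjacent to two distinct ideal triangles, and an ideal quadrilateral always has two diagonals, so a flip automatically yields an honest ideal triangulation; positivity of the new $\lambda$-length is a consequence, not a hypothesis to check. Second, your uniqueness argument (use the unique combinatorial type of the ideal triangulation plus the rigidity of decorated structures to produce an isometry realizing any weight-matching combinatorial isomorphism, then count $|\Isom|=12$) differs slightly from the paper, which instead observes that the Farey triangulation is the unique one with weights $(1,1,1)$ and then invokes the isomorphism between the Markov tree and the flip graph; your rigidity argument is arguably more direct and avoids having to make the tree-isomorphism statement precise, while the paper's version makes the parallelism between the two trees explicit, which it reuses elsewhere.
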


To understand the logical connections, it makes sense to consider not
only the modular torus but arbitrary once punctured hyperbolic tori.

A \emph{once punctured hyperbolic torus} is a torus with one point
removed, equipped with a complete metric of constant curvature $-1$
and finite volume. For example, one obtains a once punctured
hyperbolic torus by gluing two congruent decorated ideal triangles
along their edges in such a way that the horocycles fit together.
Conversely, every ideal triangulation of a hyperbolic torus with one
puncture decomposes it into two ideal triangles.

A \emph{decorated once punctured hyperbolic torus} is a once punctured
hyperbolic torus together with a choice of horocycle at the
cusp. Thus, a triple of weights $(a,b,c)\in\R_{>0}^{3}$ determines a
decorated once punctured hyperbolic torus up to isometry, together
with an ideal triangulation. Conversely, a decorated once punctured
hyperbolic torus together with an ideal triangulation determines such
a triple of edge weights.

Consider a decorated once punctured hyperbolic torus with an ideal
triangulation $T$ with edge weights $(a,b,c)\in\R_{>0}^{3}$. By
equation~\eqref{eq:c}, the total length of the horocycle is
\begin{equation*}
  \ell = 2\big(\frac{a}{bc}+\frac{b}{ca}+\frac{c}{ab}\big).
\end{equation*}
This equation is equivalent to 
\begin{equation*}
  a^{2}+b^{2}+c^{2}=\frac{\ell}{2}\,abc.
\end{equation*}
Thus, the weights satisfy Markov's equation~\eqref{eq:markov} (not
considered as a Diophantine equation) if and only if the horocycle has
length $\ell=6$. From now on, we assume that this is the case:
We decorate all once punctured hyperbolic tori with the horocycle
of length $6$.

Let $T'$ be the ideal triangulation obtained from $T$ by flipping the
edge with weight $a$, i.e., by replacing this edge with the other
diagonal in the ideal quadrilateral formed by the other edges (see
Fig.~\ref{fig:torus}). By equation~\eqref{eq:a_prime} and Ptolemy's
relation~\eqref{eq:ptolemy}, the edge weights of $T'$ are
$(a',b,c)=\sigma_{1}(a,b,c)$. Of course, one obtains analogous
equations if a different edge is flipped.

The modular torus $M$, decorated with a horocycle of length $6$, is
obtained by gluing two decorated ideal triangles with weights
$(1,1,1)$. Lifting this triangulation and decoration to the hyperbolic
plane, one obtains the Farey tessellation with Ford circles
(Fig.~\ref{fig:farey}).  This implies that for every Markov triple
$(a,b,c)$ there is an ideal triangulation of the decorated modular
torus with edge weights $a$, $b$, $c$. To see this, follow the path in
the Markov tree leading from $(1,1,1)$ to $(a,b,c)$ and perform the
corresponding edge flips on the projected Farey tessellation.

On the other hand, the flip graph of a complete hyperbolic surface
with punctures is also connected \cite{hatcher91}
\cite[p.~36ff]{mosher88}. The \emph{flip graph} has the ideal
triangulations as vertices, and edges connect triangulations related
by a single edge flip. (Since we are only interested in a once
punctured torus, invoking this general theorem is somewhat of an
overkill.) This implies the converse statement: If $a$, $b$, $c$ are
the weights of an ideal triangulation of the modular torus, then
$(a,b,c)$ is a Markov triple.

Note that there is only one ideal triangulation of the modular torus
with weights $(1,1,1)$, i.e., the triangulation that lifts to the
Farey tessellation. The symmetries of the modular torus permute its
edges. Since the Markov tree and the flip graph are isomorphic, this
implies that two triangulations with the same weights are related by
an isometry of the modular torus. Altogether, one obtains
Proposition~\ref{prop:triangs}.

\section{Geodesics crossing a decorated ideal triangle}
\label{sec:crossing}

For the proof of Markov's theorem in Sec.~\ref{sec:markovproof}, we
need to know how far a geodesic crossing a decorated ideal triangle
can stay away from the horocycles at the vertices. To prove Hurwitz's
theorem (see Sec.~\ref{sec:hurwitz}), it was enough to consider a
triangle decorated with pairwise tangent horocycles. In this section,
we consider the general case, more precisely, the following geometric
optimization problem:

\begin{problem}
  \label{prob:crossing}
  \emph{Given} a decorated ideal triangle with two sides, say $a_{1}$
  and $a_{2}$, designated as ``legs'', and the third side, say
  $a_{3}$, designated as ``base''. \emph{Find}, among all geodesics
  intersecting both legs, a geodesic that maximizes the minimum of
  signed distances to the three horocycles at the vertices.
\end{problem}

It makes sense to consider the corresponding optimization problem for
euclidean triangles: Which straight line crossing two given legs has
the largest distance to the vertices? The answer depends on whether or
not an angle at the base is obtuse. For decorated
ideal triangles, the situation is completely analogous. We say that a
geodesic \emph{bisects} a side of a decorated ideal triangle if it
intersects the side in the point at equal distance to the two
horocycles at the ends of the side.

\begin{sidewaysfigure}
  \hfill
  \labellist
  \small\hair 2pt
  \pinlabel {$v_{2}$} [t] at 198 9
  \pinlabel {$v_{1}$} [t] at 67 9
  \pinlabel {$v_{3}=\infty$} [l] at 130 200
  \pinlabel {$a_{1}$} [l] at 197 120
  \pinlabel {$a_{2}$} [r] at 66 130
  \pinlabel {$a_{3}$} [t] at 133 74
  \pinlabel {$h_{2}$} [br] at 220 21
  \pinlabel {$h_{1}$} [bl] at 32 29
  \pinlabel {$h_{3}$} [b] at 32 156
  \pinlabel {$g$} [r] at 11 50
  \pinlabel {$P_{2}$} [bl] at 228 51
  \pinlabel {$P_{1}$} [br] at 30 83
  \pinlabel {$P_{3}$} [tl] <-2pt,0pt> at 121 126
  \pinlabel {$c_{2}$} [br] at 186 63
  \pinlabel {$c_{1}$} [bl] at 84 81
  \pinlabel {$c_{3}$} [b] at 129 159
  \pinlabel {$s_{2}$} [t] at 93 152
  \pinlabel {$s_{2}$} [br] at 54 82
  \pinlabel {$s_{1}$} [t] at 156 152
  \pinlabel {$s_{1}$} [bl] <-1pt, -2pt> at 212 61
  \pinlabel {$s_{3}$} [tr] at 65 79
  \pinlabel {$s_{3}$} [tl] at 199 58
  \pinlabel {$x_{1}$} [t] <1pt,0pt> at 2 9
  \pinlabel {$x_{0}$} [t] <1pt,0pt> at 120 9
  \pinlabel {$x_{2}$} [t] <1pt,0pt> at 236.5 9
  \pinlabel {$r$} [br] at 145 38
  \pinlabel {$\,1$} [l] at 261 156
  \pinlabel {$\frac{1}{a_{1}}$} [l] at 261 97
  \pinlabel {$\frac{1}{a_{1}^{2}}$} [l] at 261 61
  \endlabellist
  \includegraphics{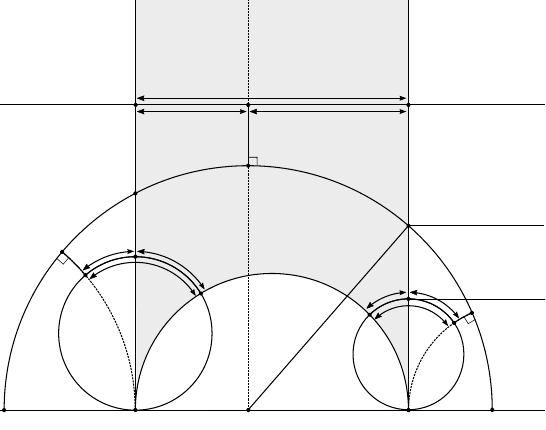}
 \hfill
\labellist
\small\hair 2pt
 \pinlabel {$v_{2}$} [t] at 158 9
 \pinlabel {$v_{1}$} [t] at 68 9
 \pinlabel {$v_{3}=\infty$} [t] at 111 204
 \pinlabel {$h_{2}$} [r] at 148 18
 \pinlabel {$h_{1}$} [l] at 27 33
 \pinlabel {$h_{3}$} [b] at 17 156
 \pinlabel {$g$} [b] at 13 119
 \pinlabel {$P_{2}$} [l] at 169 23
 \pinlabel {$P_{1}$} [b] at 82 123
 \pinlabel {$P_{3}$} [t] at 53 125
 \pinlabel {$a_{1}$} [l] at 157 96
 \pinlabel {$a_{2}$} [br] at 67 129
 \pinlabel {$a_{3}$} [t] at 124 52
 \pinlabel {$c_{3}$} [b] at 106 159
 \pinlabel {$s_{2}$} [b] at 60 159
 \pinlabel {$s_{1}$} [t] at 98 151
 \pinlabel {$c_{1}$} [bl] at 102 88
 \pinlabel {$s_{3}$} [tr] at 99 84
 \pinlabel {\footnotesize $s_{2}$} [t] at 70.5 97
 \pinlabel {\tiny $c_{2}$} [b] at 153 29
 \pinlabel {\tiny $s_{1}$} [b] at 163 27
 \pinlabel {\tiny $s_{3}$} [t] at 159.5 26
\endlabellist
 \includegraphics{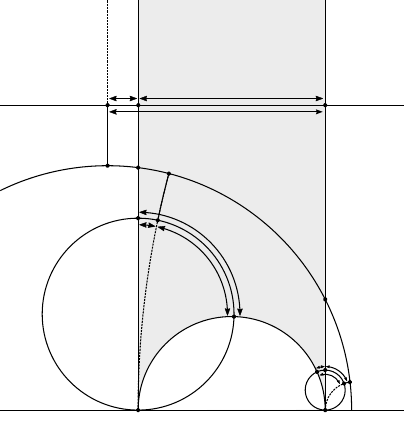}
 \hspace*{\fill}
 \caption{Decorated ideal triangle (shaded) and geodesic $g$ through
   the midpoints of sides $a_{1}$ and $a_{2}$. \emph{Left:}
   Inequalities~\eqref{eq:acute} are strictly satisfied and $P_{3}$
   lies strictly between $P_{1}$ and $P_{2}$. (The height marks on the
   right margin belong to the proof of Proposition~\ref{prop:crossing2}.)
   \emph{Right:} $a_{1}^{2}>a_{2}^{2}+a_{3}^{2}$ and $P_{1}$ lies
   strictly between $P_{3}$ and $P_{2}$.}
 \label{fig:crossing}
\end{sidewaysfigure}

\begin{proposition}
  \label{prop:crossing1}
  Consider a decorated ideal triangle with horocycles $h_{1}$,
  $h_{2}$, $h_{3}$, and let $a_{1}$, $a_{2}$, $a_{3}$ denote both the
  sides and their weights (see Fig.~\ref{fig:crossing} for notation).

  (i) If
  \begin{equation}
    \label{eq:acute}
    a_{1}^{2}\leq a_{2}^{2}+a_{3}^{2}
    \quad\text{and}\quad
    a_{2}^{2}\leq a_{1}^{2}+a_{3}^{2},
  \end{equation}
  then the geodesic $g$ bisecting the sides $a_{1}$ and $a_{2}$ is the
  unique solution of Problem~\ref{prob:crossing}. 

  (ii) If, for $(j,k)\in\{(1,2),(2,1)\}$,
  \begin{equation}
    \label{eq:obtuse}
    a_{j}^{2}\geq a_{k}^{2}+a_{3}^{2},
  \end{equation}
  then the perpendicular bisector $g'$ of side $a_{k}$ is the unique
  solution of Problem~\ref{prob:crossing}. In this case, the minimal
  distance is attained for $h_{j}$ and $h_{3}$,
  \begin{equation}
    \label{eq:perp_bisector}
    d(h_{j},g')=d(h_{3},g')=\frac{\alpha_{k}}{2}\leq d(h_{k},g').
  \end{equation}
\end{proposition}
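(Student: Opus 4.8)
The plan is to normalize the decorated ideal triangle so that the vertex $v_3$ with the opposite side $a_3$ sits at $\infty$, with the two legs $a_1$ and $a_2$ becoming vertical geodesics, as in Figure~\ref{fig:crossing}. Placing $v_1$ at $0$, the horocycle $h_3$ at $\infty$ becomes a horizontal line at some height, and by~\eqref{eq:c} the heights and positions of everything are determined by the weights $a_1,a_2,a_3$; in particular I would fix $v_1=0$, $v_2=1$ (so $a_3$ is the geodesic from $0$ to $1$) and read off $h_1$, $h_2$ from the diameters $\tfrac{1}{a_1^2}$-type expressions coming from~\eqref{eq:c} — this is exactly what the right margin of the left figure records. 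A geodesic $g$ crossing both legs is then a euclidean half-circle meeting the vertical lines over $0$ and over $1$; I would parametrize it by its two endpoints $x_1<x_0<x_2$ on $\R$ (with $x_1<0<1<x_2$ after a possible relabeling), or equivalently by center and radius.

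The next step is to write the three signed distances $d(h_i,g)$ as explicit functions of the parameters of $g$, using Proposition~\ref{prop:dhv} for the two horocycles at the finite vertices and the easy horizontal-horocycle computation for $h_3$. Using $g=g(f)$ for the quadratic form $f$ with roots $x_1,x_2$ and Proposition~\ref{prop:dhgf}, each $d(h_i,g)$ becomes $\log\bigl(|f(p_i,q_i)|/\sqrt{-\det f}\bigr)$ where $(p_i,q_i)$ is the vector representing $h_i$; since $f(p,q)$ is, up to the normalizing factor, the signed product of the distances from $p/q$ to the two roots, this makes all three distances completely explicit and monotone in the obvious ways as $g$ sweeps across. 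The optimization is then: maximize $\min_i d(h_i,g)$. Because $d(h_1,g)$ increases and $d(h_2,g)$ decreases (say) as $g$ slides from leg $a_1$ toward leg $a_2$, while $d(h_3,g)$ has a single interior maximum (at the perpendicular bisector of $a_3$), the optimum is governed by where the graphs of these three functions cross — i.e.\ by the points $P_1,P_2,P_3$ on $g$-space where two of the distances coincide.

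The core computation is to locate $P_1,P_2,P_3$ and compare their order. One finds: $P_3$ (where $d(h_1,g)=d(h_2,g)$) corresponds to $g$ bisecting both legs; $P_1$ (where $d(h_2,g)=d(h_3,g)$) and $P_2$ (where $d(h_1,g)=d(h_3,g)$) correspond to $g$ bisecting one leg and meeting a midpoint relation with $h_3$. Plugging in the normalized coordinates, the condition that $P_3$ lies between $P_1$ and $P_2$ reduces, after clearing the $\log$s and simplifying, exactly to the triangle-type inequalities~\eqref{eq:acute}; and the degenerate/obtuse case $a_j^2\ge a_k^2+a_3^2$ is precisely when $P_j$ collapses past $P_3$, so that the true maximum of $\min_i d(h_i,g)$ is achieved at the perpendicular bisector of $a_k$, where $d(h_j,g')=d(h_3,g')=\alpha_k/2$ by a direct length computation (half the truncated length of $a_k$), establishing~\eqref{eq:perp_bisector}. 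Uniqueness in both cases follows from strict monotonicity: $\min_i d(h_i,g)$ is strictly unimodal along the family of crossing geodesics.

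**Main obstacle.** The conceptual structure — ``max-min of two monotone functions and one unimodal function is controlled by the order of their pairwise crossing points'' — is clean, but the bookkeeping is where the work is: one must choose a normalization in which all of $h_1,h_2,h_3$ and the family of crossing geodesics have manageable coordinates, and then verify that the inequality ``$P_3$ between $P_1$ and $P_2$'' simplifies to~\eqref{eq:acute} rather than to some messier equivalent. I expect the cleanest route is to work with $\lambda$-lengths and Ptolemy/\eqref{eq:c} relations throughout (so that the $a_i$ appear symmetrically and the $\log$s of the distances become differences of half-sums of truncated lengths), rather than brute euclidean coordinates; getting that reformulation right, so the triangle inequality appears transparently, is the crux.
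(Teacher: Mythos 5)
Your plan has the right ingredients in the right vicinity, but it contains a genuine gap: the family of geodesics crossing two fixed legs of an ideal triangle is \emph{two}-dimensional (parametrized by the two endpoints $x_1,x_2$, as you yourself note), yet your argument is framed entirely as if it were one-dimensional. Phrases like ``$d(h_1,g)$ increases and $d(h_2,g)$ decreases as $g$ slides from leg $a_1$ toward leg $a_2$'' and ``$\min_i d(h_i,g)$ is strictly unimodal along the family'' presuppose a single real parameter. In the actual two-parameter family, the loci where two distances coincide (your ``$P_1,P_2,P_3$'') are curves, not points, and ``$d(h_1,g)=d(h_2,g)$'' does not single out the geodesic bisecting both legs --- it cuts out a one-parameter subfamily. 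You never explain how to reduce the optimization to a one-parameter problem (for instance, by first arguing that at the maximum two of the $u_i$ must be equal), so the ``order of crossing points'' step is not well-posed as written.

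The paper's main proof sidesteps the dimension issue by a purely geometric device that your plan does not contain. It takes the \emph{candidate} geodesic $g$ (bisecting $a_1$ and $a_2$, which has equal distance to all three horocycles by the $180^\circ$ rotational symmetry of a bisecting geodesic), and defines $P_k$ as the foot of the perpendicular from the vertex $v_k$ to $g$ --- these $P_k$ are points \emph{on} $g$, not positions in a space of geodesics, which is a different object from your $P_k$. If $P_3$ lies between $P_1$ and $P_2$, then any other geodesic crossing both legs must cross at least one of the rays $P_k v_k$ and is therefore strictly closer to that horocycle; this kills the whole two-dimensional competitor family in one topological stroke. The triangle-type inequalities~\eqref{eq:acute} then emerge exactly as you predict --- from the relation~\eqref{eq:c} between horocyclic arc lengths and weights --- when one expresses the signed arc-length offsets $s_1,s_2$ and checks their signs. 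The paper's Remark also sketches an analytic route closer in spirit to yours, but there the crucial ingredient is a single \emph{constraint equation} tying $u_1,u_2,u_3$ together,
\begin{equation*}
(c_1 u_1 + c_2 u_2 + c_3 u_3)^2 - 4 c_1 c_2 u_1 u_2 - 4 = 0,
\end{equation*}
which reduces the two-parameter family to a surface in $(u_1,u_2,u_3)$-space on which one then does a constrained max-min. Without either that constraint equation or the geometric ray argument, the reduction from two parameters to one --- the step your plan glides over --- is exactly where the proof needs its idea.
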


In the proof of Markov's theorem (Sec.~\ref{sec:markovproof}), the
base $a_{3}$ will always be a largest side, so only part (i) of
Proposition~\ref{prop:crossing1} is needed. We will also need
some equations for the geodesic bisecting two sides, which we collect
in Proposition~\ref{prop:crossing2}. 

\begin{proof}[Proof of Proposition~\ref{prop:crossing1}]
  1. The geodesic $g$ has equal distance from all three
  horocycles. Indeed, because of the $180^{\circ}$ rotational symmetry
  around the intersection point, any geodesic bisecting a side has
  equal distance from the two horocycles at the ends.

  2. For $k\in\{1,2,3\}$ let $P_{k}$ be the foot of the perpendicular
  from vertex $v_{k}$ to the geodesic $g$ bisecting $a_{1}$ and
  $a_{2}$ (see Fig.~\ref{fig:crossing}). If $P_{3}$ lies strictly
  between $P_{1}$ and $P_{2}$ (as in Fig.~\ref{fig:crossing}, left),
  then $g$ is the unique solution of Problem~\ref{prob:crossing}. Any
  other geodesic crossing $a_{1}$ and $a_{2}$ also crosses at least
  one of the rays from $P_{k}$ to $v_{k}$, and is therefore closer to
  at least one of the horocycles.

  3. If $P_{1}$ lies strictly between $P_{3}$ and $P_{2}$ (as in
  Fig.~\ref{fig:crossing}, right) then the unique solution of
  Problem~\ref{prob:crossing} is the perpendicular bisector of
  $a_{2}$. Its signed distance to the horocycles $h_{1}$ and $h_{3}$
  is half the truncated length of side $a_{2}$. Any other geodesic
  crossing $a_{2}$ is closer to at least one of its horocycles. The
  signed distance of $g$ and the horocycle $h_{1}$ is larger. The case
  when $P_{1}$ lies strictly between $P_{3}$ and $P_{2}$ is treated in
  the same way.

  5. If $P_{2}=P_{3}$ (or $P_{1}=P_{3}$) then the geodesic $g$ with
  equal distance to all horocycles is simultaneously the perpendicular
  bisector of side $a_{2}$ (or $a_{1}$). 

  6. It remains to show that the order of the points $P_{k}$ on $g$
  depends on whether the weights satisfy the
  inequalities~\eqref{eq:acute} or one of the
  inequalities~\eqref{eq:obtuse}. To this end, let $s_{1}$ be the
  distance from the side $a_{1}$ to the ray $P_{3}v_{3}$, measured
  along the horocycle~$h_{3}$ in the direction from $a_{1}$ to
  $a_{2}$. Similarly, let $s_{2}$ be the distance from the side
  $a_{2}$ to the ray $P_{3}v_{3}$, measured along the horocycle
  $h_{3}$ in the direction from $a_{2}$ to $a_{1}$. So $s_{1}$ and
  $s_{2}$ are both positive if and only if $P_{3}$ lies strictly
  between $P_{1}$ and $P_{2}$. But if, for example, $P_{1}$ lies
  between $P_{3}$ and $P_{2}$ as in Fig.~\ref{fig:crossing}, right,
  then $s_{2}<0$. By symmetry, $s_{1}$ is also the distance from
  $a_{1}$ to $P_{2}v_{2}$, measured along $h_{2}$ in the direction
  away from $a_{3}$. Similarly, $s_{2}$ is also the distance between
  $a_{2}$ and $P_{1}v_{1}$ along $h_{1}$. Finally, let $s_{3}>0$ be
  the equal distances between $a_{3}$ and $P_{1}v_{1}$ along $h_{1}$,
  and between $a_{3}$ and $P_{2}v_{2}$ along $h_{2}$. Now
  \begin{equation*}
    c_{1}=-s_{2}+s_{3},\quad c_{2}=-s_{1}+s_{3},\quad c_{3}=s_{1}+s_{2}
  \end{equation*}
  implies
  \begin{equation}
    \label{eq:2s1}
    2s_{1}=c_{1}-c_{2}+c_{3}
    \overset{\eqref{eq:c}}{=}
    \frac{a_{1}}{a_{2}a_{3}}-\frac{a_{2}}{a_{3}a_{1}}+\frac{a_{3}}{a_{1}a_{2}}
    =\frac{a_{1}^{2}-a_{2}^{2}+a_{3}^{2}}{a_{1}a_{2}a_{3}}
  \end{equation}
  and similarly
  \begin{equation*}
    2s_{2}=\frac{-a_{1}^{2}+a_{2}^{2}+a_{3}^{2}}{a_{1}a_{2}a_{3}}.
  \end{equation*}
  Hence, $P_{3}$ lies in the closed interval between $P_{1}$ and
  $P_{2}$ if and only if inequalities~\eqref{eq:acute} are satisfied.
  The other cases are treated similarly.
\end{proof}

\begin{remark}
  The above proof of Proposition~\ref{prop:crossing1} is nicely
  intuitive. A more analytic proof may be obtained as follows. First,
  show that for all geodesics intersecting $a_{1}$ and $a_{2}$, the
  signed distances $u_{1}$, $u_{2}$, $u_{3}$ to the horocycles satisfy
  the equation
  \begin{equation}
    \label{eq:urelation}
    (c_{1}u_{1}+c_{2}u_{2}+c_{3}u_{3})^{2}-4c_{1}c_{2}u_{1}u_{2}-4 =0
  \end{equation}
  It makes sense to consider the special case $a_{1}=a_{2}=a_{3}=1$
  first, because the general equation~\eqref{eq:urelation} can easily
  be derived from the simpler one. Then consider the necessary conditions
  for a local maximum of $\min(u_{1},u_{2},u_{3})$ under the
  constraint~\eqref{eq:urelation}: If a maximum is attained with
  $u_{1}=u_{2}=u_{3}$, then the three partial derivatives of the left
  hand side of~\eqref{eq:urelation} are all $\geq 0$ or all $\leq 0$.
  If a maximum is attained with $u_{1}=u_{2}<u_{3}$, then this sign
  condition holds for the first two derivatives, and similarly for the
  other cases.
\end{remark}

\begin{proposition}
  \label{prop:crossing2}
  Let $g$ be the geodesic bisecting sides $a_{1}$ and $a_{2}$ of a
  decorated ideal triangle as shown in
  Fig.~\ref{fig:crossing}. (Inequalities~\eqref{eq:acute} may hold or
  not.) Then the common signed distance of $g$ and the horocycles is
  \begin{equation*}
    d(h_{1},g)=d(h_{2},g)=d(h_{3},g)= -\log r,
  \end{equation*}
  where
  \begin{equation}
    \label{eq:r}
    r = \sqrt{\frac{\delta^{2}}{4}-\frac{1}{a_{3}^{2}}},
  \end{equation}
  and $\delta$ is the sum of the lengths of the horocyclic arcs,
  \begin{equation}
    \label{eq:delta}
    \delta = c_{1}+c_{2}+c_{3} = \frac{a_{1}}{a_{2}a_{3}}
    +\frac{a_{2}}{a_{3}a_{1}}+\frac{a_{3}}{a_{1}a_{2}}.
  \end{equation}
  Moreover, suppose the vertices are 
  \begin{equation}
    \label{eq:v_assumption}
    v_{1}<v_{2},\quad v_{3}=\infty, 
  \end{equation}
  and the horocycle $h_{3}$ has height $1$. Then the ends $x_{1,2}$
  of $g$ are
  \begin{equation}
    \label{eq:x1x2}
    x_{1,2}=x_{0}\pm r,
  \end{equation}
  where
  \begin{equation}
    \label{eq:x0alt}
    x_{0}=v_{2}+\frac{a_{2}}{a_{3}a_{1}}-\frac{\delta}{2}
  \end{equation}
\end{proposition}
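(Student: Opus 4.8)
The plan is to work in the normalized half-plane picture and reduce everything to elementary coordinate geometry. The first assertion is invariant under hyperbolic isometry, so we may assume \eqref{eq:v_assumption} together with the hypothesis of the second part that $h_3$ has height $1$, and treat both parts at once; note that once $g$ is identified as a Euclidean semicircle with center $x_0$ on $\R$ and radius $r$, its ends are automatically $x_0\pm r$, so it suffices to compute $x_0$ and $r$ and to evaluate $d(h_3,g)$.

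\emph{Coordinates.} With $v_3=\infty$ the legs $a_1$, $a_2$ are the vertical half-lines over $v_2$ and $v_1$, the base $a_3$ is a Euclidean semicircle centered on $\R$, and $h_3$ is the horizontal line at height $1$. Since arc length along $h_3$ at height $1$ equals Euclidean length, the relation \eqref{eq:c} gives $v_2-v_1=c_3$. Next I determine the other two horocycles: a horocycle tangent to $\R$ at $v_k$ with Euclidean diameter $D$ meets the vertical line over $v_k$ at height $D$, and that vertical line is the common perpendicular of this horocycle and $h_3$, so their signed distance is $-\log D$. Equating this with the truncated leg lengths $d(h_2,h_3)=2\log a_1$ and $d(h_1,h_3)=2\log a_2$ shows that $h_2$ has Euclidean diameter $a_1^{-2}$ and $h_1$ has Euclidean diameter $a_2^{-2}$. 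Therefore the point of $a_1$ at equal signed distance from $h_2$ and $h_3$ sits at the geometric mean of the heights $a_1^{-2}$ and $1$, namely $(v_2,a_1^{-1})$, and the analogous point of $a_2$ is $(v_1,a_2^{-1})$; these are exactly the points a geodesic bisecting the respective leg must pass through.

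\emph{The geodesic $g$.} Hence $g$ is the unique geodesic through $(v_2,a_1^{-1})$ and $(v_1,a_2^{-1})$, and since these have different abscissas it is a semicircle $(x-x_0)^2+y^2=r^2$. Substituting the two points, subtracting the resulting equations, and using $v_2-v_1=c_3$ gives $x_0$; then either equation gives $r^2$. The rest is bookkeeping: rewriting in terms of the $c_k$ and using $c_ic_j=a_k^{-2}$ (immediate from \eqref{eq:c}) and $c_1+c_2+c_3=\delta$ (by \eqref{eq:delta}), one finds $v_2-x_0=\tfrac12(c_1-c_2+c_3)=\tfrac12(\delta-2c_2)$, i.e.\ $x_0=v_2+c_2-\tfrac{\delta}{2}$ with $c_2=\tfrac{a_2}{a_3a_1}$, which is \eqref{eq:x0alt}; and $r^2=\bigl(\tfrac12(c_1-c_2+c_3)\bigr)^2+c_2c_3=\tfrac14\bigl((c_1+c_2+c_3)^2-4c_1c_2\bigr)=\tfrac{\delta^2}{4}-a_3^{-2}$, which is \eqref{eq:r}; in particular $r^2>0$, whether or not \eqref{eq:acute} holds.

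\emph{The distances.} The geodesic segment realizing $d(h_3,g)$ is perpendicular to the horizontal line $h_3$ and to the semicircle $g$, hence lies on the vertical line $x=x_0$; running from height $r$ on $g$ to height $1$ on $h_3$, it gives $d(h_3,g)=-\log r$. Finally $d(h_1,g)=d(h_2,g)=d(h_3,g)$ because a geodesic bisecting a side is equidistant from the two horocycles at the ends of that side, which is step~1 in the proof of Proposition~\ref{prop:crossing1} applied to the legs $a_1$ and $a_2$. I do not expect any genuine obstacle here: the only point requiring care is keeping the signs straight so that the formulas stay valid when \eqref{eq:acute} fails — which they do, since the derivation never invokes those inequalities.
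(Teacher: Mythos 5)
Your proof is correct and follows essentially the same route as the paper's one-line sketch: identify $g$ by the bisection points $(v_2,a_1^{-1})$ and $(v_1,a_2^{-1})$, obtain the center $x_0=v_2-s_1$ with $s_1=\tfrac12(c_1-c_2+c_3)$ (which is exactly \eqref{eq:2s1}), and apply Pythagoras together with $c_ic_j=a_k^{-2}$ to get $r$; your ``bookkeeping'' is precisely the ``easy hyperbolic geometry, Pythagoras' theorem, and simple algebra'' the paper alludes to.
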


\begin{proof}
  Assuming~\eqref{eq:v_assumption} and $h_{3}=h(1,0)$, let
  $x_{0}=v_{2}-s_{1}$.  Then the proposition follows
  from~\eqref{eq:2s1}, some easy hyperbolic geometry, Pythagoras'
  theorem, and simple algebra (see Fig.~\ref{fig:crossing}).
\end{proof}

\section{Simple closed geodesics and ideal arcs}
\label{sec:arcs_geodesics}

In this section, we collect some topological facts about simple closed
geodesics and ideal arcs that we will use in the proof of Markov's
theorem (Sec.~\ref{sec:markovproof}). They are probably well known,
but we indicate proofs for the reader's convenience.

An \emph{ideal arc} in a complete hyperbolic surface with cusps is a
simple geodesic connecting two punctures or a puncture with
itself. The edges of an ideal triangulation are ideal arcs, and every
ideal arc occurs in an ideal triangulation. (In fact, ideal
triangulations are exactly the maximal sets of non-intersecting ideal
arcs.) Here, we are only interested in a once punctured hyperbolic
torus. In this case, every ideal triangulation containing a fixed
ideal arc can be obtained from any other such triangulation by
repeatedly flipping the remaining two edges. Ideal arcs play an
important role in the following section because they are in one-to-one
correspondence with the simple closed geodesics
(Proposition~\ref{prop:arcs_geodesics}), and the simple closed
geodesics are the geodesics that stay farthest away from the puncture
(Proposition~\ref{prop:markov_forms_geo}).

\begin{proposition}
  \label{prop:arcs_geodesics}
  Consider a fixed once punctured hyperbolic torus.
  
  (i) For every ideal arc~$c$, there is a unique simple closed
  geodesic~$g$ that does not intersect $c$. 

  (ii) Every other geodesic not intersecting $c$ has either two ends
  in the puncture, or one end in the puncture and the other end
  approaching the closed geodesic $g$.

  (iii) If $a$, $b$, $c$ are the edges of an ideal triangulation $T$,
  then the simple closed geodesic $g$ that does not intersect $c$
  intersects each of the two triangles of $T$ in a geodesic segment
  bisecting the edges $a$ and $b$.

  (iv) For every simple closed geodesic~$g$, there is a unique ideal
  arc~$c$ that does not intersect~$g$.
\end{proposition}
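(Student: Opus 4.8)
The four parts of Proposition~\ref{prop:arcs_geodesics} are really a single statement viewed from four angles, so the plan is to prove (i) and (iii) together using the universal cover and the known combinatorics of ideal triangulations of the once punctured torus, then derive (ii) and (iv) as corollaries. Throughout I work in the upper half-plane with the torus presented as $H^2/G$, $G$ the commutator subgroup of $\PSLTZ$, and I use the fact (Sec.~\ref{sec:arcs_geodesics}) that every ideal arc $c$ sits in some ideal triangulation $T$, and that the flip graph of the once punctured torus is connected with a very rigid structure: fixing one edge, the other two are flipped alternately.

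\textbf{Existence in (i) and (iii).} Extend the chosen arc $c$ to an ideal triangulation $T$ with edges $a,b,c$; lift to the Farey-type tessellation of $H^2$ (Sec.~\ref{sec:triangulations}). The two triangles of $T$ glue along $c$ into an ideal quadrilateral $Q$ whose four sides are the two lifts of $a$ and the two lifts of $b$, with $c$ a diagonal; flipping $c$ gives the other diagonal $c'$. Now consider the geodesic $g$ that, in each of the two triangles of $T$, runs through the midpoints of the sides $a$ and $b$ (here ``midpoint'' = the point equidistant from the two decorating horocycles, as in Sec.~\ref{sec:crossing}). By the $180^\circ$ rotational symmetry of a decorated ideal triangle about such a midpoint, the two segments match up at the shared points on $a$ and on $b$ to form a single bi-infinite geodesic on the torus that misses $c$ entirely. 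One must check it closes up: the deck transformation of $G$ that realizes the gluing around $Q$ carries one end of the lifted geodesic to the other (this is where the explicit generators \eqref{eq:AB}, or just the combinatorics of $Q$, are used), so $g$ is a closed geodesic, and it is simple because it meets each triangle in a single arc and those arcs are disjoint from $c$ and from each other. This simultaneously gives existence in (i) and proves (iii) for this particular triangulation; that (iii) holds for \emph{every} triangulation containing $c$ then follows because any two such triangulations differ by flips of $a$ and $b$ only, and such a flip does not move $g$ (it only re-cuts $g$'s already-existing segments).

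\textbf{Uniqueness in (i), and parts (ii) and (iv).} For uniqueness, note that cutting the torus along $c$ yields a hyperbolic cylinder (an annulus with one boundary the doubled arc $c$ and the other the cusp); a simple closed geodesic disjoint from $c$ lives in this cylinder, and a hyperbolic cylinder with a cusp contains exactly one closed geodesic — its core. This also proves (ii): any complete geodesic disjoint from $c$ either runs out the cusp at both ends, or spirals toward the unique core geodesic $g$ on one side while escaping to the cusp on the other, since those are the only geodesic behaviors available in the cusped cylinder. Finally (iv): given a simple closed geodesic $g$, it is the core of exactly one embedded cusped cylinder (its complement in the torus, which is topologically such a cylinder because $g$ is non-separating), and the ``opposite boundary'' of that cylinder retracts to a unique isotopy class of ideal arc $c$; that $c$ is disjoint from $g$, and it is the only ideal arc disjoint from $g$ because any ideal arc disjoint from $g$ must lie in the complementary cylinder and an ideal arc in a cusped cylinder (from the cusp to itself, staying on one side) is unique up to isotopy, hence unique as a geodesic. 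The maps $c\mapsto g$ and $g\mapsto c$ are mutually inverse by construction, giving the asserted bijection.

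\textbf{Main obstacle.} The genuinely delicate point is the ``closing up'' in the existence argument: verifying that the concatenated midpoint-segments form a \emph{closed} geodesic rather than a properly embedded line, and that it is embedded rather than merely immersed. I would handle this by working entirely in the quadrilateral $Q$ and its lifts — tracking how the side-pairing isometry acts on the endpoints of the midpoint geodesic in one triangle — so the claim reduces to a finite check with the symmetries of a single decorated ideal quadrilateral, avoiding any appeal to the full flip-graph machinery. The topological statements about cusped cylinders (one core geodesic, one ideal arc up to isotopy) are standard surface topology and I would state them with a one-line justification rather than a proof.
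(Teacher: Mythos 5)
Your proposal follows essentially the same route as the paper: cut along $c$ to get a cusped cylinder for uniqueness in (i) and for (ii) and (iv), and use a $180^\circ$ rotational symmetry through edge midpoints to build the closed geodesic in (iii). There is one place where your version creates work the paper avoids, and it is worth flagging.

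You phrase the symmetry as a ``$180^\circ$ rotational symmetry of a decorated ideal triangle about the midpoint of a side.'' A single decorated ideal triangle does \emph{not} have such a symmetry: the rotation carries the triangle to a congruent triangle on the other side of the edge. What is actually needed --- and what the paper uses --- is that the once punctured torus with its ideal triangulation carries a global orientation-preserving isometric involution (the hyperelliptic involution of Remark~\ref{rem:involution}) whose three fixed points are precisely the edge midpoints. This involution swaps the two triangles and maps the bisecting segment in one triangle to the bisecting segment in the other; since each midpoint is a fixed point, the differential there is $-\mathrm{Id}$, so the two segments concatenate to a smooth curve. In particular, the worry you isolate as the ``main obstacle'' --- that the concatenation might be a properly embedded line rather than a closed geodesic --- evaporates: there are only two segments and they share \emph{both} endpoints $m_a$ and $m_b$, so once smoothness is established at those two points the union is automatically a closed geodesic, embedded because the two open arcs lie in the interiors of the two distinct triangles. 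No deck-transformation bookkeeping or finite check in the quadrilateral is needed. Your finite-check route would of course also work, but the global involution is the cleaner instrument, and it is the one the author deploys.

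Two smaller points. First, you derive existence in (i) from (iii); the paper instead gets existence and uniqueness in (i) in one stroke from the cylinder obtained by cutting along $c$ (unique nontrivial free homotopy class of simple closed curves, hence a unique simple closed geodesic), which avoids choosing a triangulation at all. Both are fine. Second, your topological descriptions of the cut surfaces are a little loose: cutting the once punctured torus along $c$ gives a cylinder whose \emph{two} boundary curves are the two copies of $c$ (each a complete geodesic with both ends in the now-split cusp), not a cylinder with ``one boundary the doubled arc $c$''; and for (iv), after cutting along $g$ one should fill the puncture, as the paper does, before counting homotopy classes of based loops --- otherwise the fundamental group is free of rank $2$ and the ``unique class'' claim needs the additional qualification you gesture at with ``staying on one side.'' Finally, your remark that (iii) persists under flips of $a$ and $b$ ``because a flip does not move $g$'' is not by itself a proof that $g$ bisects the flipped edge; this again follows cleanly from the involution, which fixes the midpoints of \emph{every} ideal arc, independently of the triangulation.
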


\begin{remark}
  \label{rem:involution}
  Speaking of edge midpoints implies an (arbitrary) choice of a
  horocycle at the cusp. In fact, the edge midpoints of a triangulated
  once punctured torus are distinguished without any choice of
  triangulation. They are the three fixed points of an orientation
  preserving isometric involution. Every ideal arc passes through one
  of these points.
\end{remark}

\begin{proof}
  (i) Cut the torus along the ideal arc $c$. The result is a
  hyperbolic cylinder as shown in Fig.~\ref{fig:cylinder} (left). 
  \begin{figure}
    \centering
    \hfill
    \labellist
    \small\hair 2pt
    \pinlabel {$c$} [t] at 29 73
    \pinlabel {$g$} [t] at 29 44
    \pinlabel {$c$} [b] at 29 13
    \endlabellist
    \includegraphics[scale=1.2]{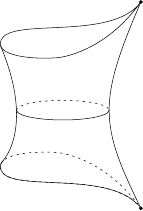}
    \hfill
    \labellist
    \small\hair 2pt
    \pinlabel {$g$} [t] at 30 69
    \pinlabel {$c$} [t] at 30 40
    \pinlabel {$g$} [b] at 30 0
    \endlabellist
    \includegraphics[scale=1.2]{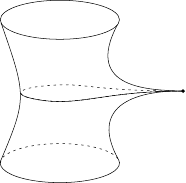}
    \hspace*{\fill}
    \caption{Cutting a punctured torus along an ideal arc (left) and
      along a simple closed geodesic (right).}
    \label{fig:cylinder}
  \end{figure}
  Both boundary curves are complete geodesics with both ends in the
  cusp, which is now split in two. There is up to orientation a unique
  non-trivial free homotopy class that contains simple curves, and
  this class contains a unique simple closed geodesic.

  (ii) Consider the universal cover of the cylinder in the hyperbolic
  plane.

  (iii) An ideal triangulation of a once punctured torus is symmetric
  with respect to a $180^{\circ}$ rotation around the edge
  midpoints. (This is the involution mentioned in
  Remark~\ref{rem:involution}.) It swaps the geodesic segments
  bisecting edges $a$ and $b$ in the two ideal triangles, so they
  connect smoothly. Hence they form a simple closed geodesic, which
  does not intersect $c$.

  (iv) Cut the torus along the simple closed geodesic $g$. The result
  is a cylinder with a cusp and two geodesic boundary circles, as
  shown in Fig.~\ref{fig:cylinder} (right). Fill the puncture and take
  it as base point for the homotopy group. There is up to orientation
  a unique non-trivial homotopy class containing simple closed curves
  and this class contains a unique ideal arc.
\end{proof}

\section{Proof of Markov's theorem}
\label{sec:markovproof}

In this section, we put the pieces together to prove both versions of
Markov's theorem. The quadratic forms version follows from
Proposition~\ref{prop:markov_forms_geo}. The Diophantine approximation
version follows from Proposition~\ref{prop:markov_forms_geo} together
with Proposition~\ref{prop:markov_dioph_geo}.

Two geodesics in the hyperbolic plane are \emph{$\GLTZ$-related} if,
for some $A\in\GLTZ$, the hyperbolic isometry $M_{A}$ maps one to the
other.

\begin{proposition}
  \label{prop:markov_forms_geo}
  Let $g$ be a complete geodesic in the hyperbolic plane, and
  let $\pi(g)$ be its projection to the modular torus. Then the
  following three statements are equivalent:
  
  \begin{compactenum}[(a)]
  \item $\pi(g)$ is a simple closed geodesic.
  \item There is a Markov triple $(a,b,c)$ so that for one (hence
    any) choice of integers $p_{1}$, $p_{2}$
    satisfying~\eqref{eq:p1p2}, the geodesic $g$ is $\GLTZ$-related
    to the geodesic ending in $x_{0}\pm r$ with $x_{0}$ and $r$
    defined by~\eqref{eq:markov_x0} and~\eqref{eq:markov_r}. 
  \item The greatest lower bound for the signed distances of $g$ and a
    Ford circle is greater than $-\log\frac{3}{2}$.
  \end{compactenum}
  If $g$ satisfies one (hence all) of the statements (a), (b), (c),
  then
  \begin{compactenum}[(a)]
    \setcounter{enumi}{3}
  \item the minimal signed distance of $g$ and a Ford circle is $-\log r$,
  \item among all Markov triples $(a,b,c)$ that verify (b), there is a
    unique sorted Markov triple.
  \end{compactenum}
\end{proposition}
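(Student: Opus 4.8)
The plan is to prove the chain (a)$\Rightarrow$(b)$\Rightarrow$(c)$\Rightarrow$(a), and to read off (d) from the work in the (b)$\Rightarrow$(c) step and (e) from the (a)$\Leftrightarrow$(b) correspondence. Throughout I would use the basic fact that every ideal triangulation of the modular torus lifts to one of $H^{2}$ with vertex set $\Q\cup\{\infty\}$ and all horocycles equal to Ford circles; this holds for the Farey tessellation and is preserved by edge flips, hence by all triangulations since the flip graph is connected. \emph{For (a)$\Leftrightarrow$(b) (the dictionary):} given that $\pi(g)$ is a simple closed geodesic, I would take the unique ideal arc $c_{0}$ disjoint from it (Proposition~\ref{prop:arcs_geodesics}(iv)), complete $\{c_{0}\}$ to an ideal triangulation $T$ with edge weights a Markov triple $(a,b,c)$ (Proposition~\ref{prop:triangs}), labelled so $c$ is the weight of $c_{0}$, and observe that $\pi(g)$ bisects the $a$- and $b$-edges inside each triangle of $T$ (Proposition~\ref{prop:arcs_geodesics}(iii)). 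Lifting, a lift $\tilde g$ crosses some triangle of $\tilde T$, bisecting its sides of weights $a$ and $b$ and missing the side of weight $c$; applying an element of $\GLTZ$ I would normalize this triangle to $v_{1}<v_{2}$, $v_{3}=\infty$, $h_{3}=h(1,0)$ with the $a$-leg joining $v_{2}$ to $\infty$, so that $v_{2}=p_{2}/a$ with $p_{1},p_{2}$ satisfying \eqref{eq:p1p2}. Then Proposition~\ref{prop:crossing2} applies with base $a_{3}=c$: since $(a,b,c)$ is a Markov triple, $\delta=\frac{a}{bc}+\frac{b}{ca}+\frac{c}{ab}=\frac{a^{2}+b^{2}+c^{2}}{abc}=3$, so \eqref{eq:r} becomes $r=\sqrt{9/4-1/c^{2}}$ and \eqref{eq:x0alt} becomes $x_{0}=\frac{p_{2}}{a}+\frac{b}{ac}-\frac{3}{2}$, which are exactly \eqref{eq:markov_r} and \eqref{eq:markov_x0}; hence $g$ verifies (b). The ``one (hence any)'' clause follows because all solutions of \eqref{eq:p1p2} differ by $(p_{1},p_{2})\mapsto(p_{1}+tb,p_{2}+ta)$ with $\gcd(a,b)=1$ (Remark~\ref{rem:markov}(ii)), which shifts $x_{0}$ by the integer $t$, i.e.\ translates the geodesic by $z\mapsto z+t\in\GLTZ$. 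Conversely, from a Markov triple I would realize it by a triangulation $T$ (Proposition~\ref{prop:triangs}), note that the geodesic $g_{0}$ bisecting the $a$- and $b$-edges of a triangle of $T$ runs through the same two edge midpoints as the simple closed geodesic of $T$ disjoint from the $c$-edge, hence equals it (Proposition~\ref{prop:arcs_geodesics}(iii)), and check via Proposition~\ref{prop:crossing2} that its normalized lift ends in $x_{0}\pm r$; since $g$ is $\GLTZ$-related to $g_{0}$ and $\GLTZ$-related geodesics project to curves related by an isometry of the modular torus ($G$ is normal in $\group{PGL}_{2}(\Z)$), $\pi(g)$ is a simple closed geodesic.

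\emph{For (b)$\Rightarrow$(c), (d), and (e):} with $T$, $\tilde T$, $g=g_{0}$ as in the converse above, all horocycles of $\tilde T$ are Ford circles, and $g$ bisects two sides of every triangle of $\tilde T$ it meets, each having the $c$-edge as base, so Proposition~\ref{prop:crossing2} gives $d(h,g)=-\log r$ for the three Ford circles at each such triangle, with $r=\sqrt{9/4-1/c^{2}}<\frac{3}{2}$. For a Ford circle at a vertex lying on no crossed triangle I would argue, using that $g$ has irrational endpoints (hence is disjoint from that vertex's star in $\tilde T$) and that the bisector is the farthest crossing geodesic (Proposition~\ref{prop:crossing1}), that the signed distance exceeds $-\log r$; thus $\inf_{h}d(h,g)=-\log r$, attained, which is (d), and $-\log r>-\log\frac{3}{2}$ is (c) --- both surviving passage to any $\GLTZ$-related $g$ since $\GLTZ$ permutes the Ford circles (Proposition~\ref{prop:equivariant}). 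For (e): once (a)$\Leftrightarrow$(b) holds, a Markov triple verifies (b) iff it is the edge-weight triple of a triangulation containing $c_{0}$; such triangulations differ by flips of the two edges $\neq c_{0}$ (and the $12$-fold symmetry may swap the two legs), so these triples form exactly one equivalence class in the sense of Remark~\ref{rem:markov}(iii), which contains a unique sorted Markov triple.

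\emph{For (c)$\Rightarrow$(a) --- the combinatorial core and the main obstacle:} assume (c). Then $\inf_{h}d(h,g)>-\infty$, so $g$ ends at no cusp and crosses a bi-infinite string of Farey triangles, never through a vertex. I would aim to produce one triangulation $T$ of the modular torus and one triangle of $\tilde T$ crossed by $g$ in which $g$ bisects two sides; by the rigidity above ($g$ is then the geodesic through those two midpoints, which is the simple closed geodesic of $T$), this yields (a). I would obtain $T$ by steering: starting from the Farey tessellation, repeatedly flip the edge of the current crossed triangle toward the vertex from which $g$ departs the bisector --- located via which ray $P_{k}v_{k}$ of Proposition~\ref{prop:crossing1} $g$ crosses --- keeping the base a longest side so that \eqref{eq:acute} holds. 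Proposition~\ref{prop:crossing1}(i) then gives the dichotomy: either $g$ is the bisector and we stop with (a); or $g$ comes within $-\log\sqrt{9/4-1/a_{3}^{2}}$ of one of the three Ford circles, $a_{3}$ the base weight. If the steering never terminates, the base weights $a_{3}$ must be unbounded, forcing $\inf_{h}d(h,g)\le-\log\sqrt{9/4-1/a_{3}^{2}}\to-\log\frac{3}{2}$ and contradicting (c). The hard part is precisely this dichotomy: making the steering flips precise and --- the crux --- proving that non-termination forces the base weights to infinity, equivalently, that a geodesic whose cutting sequence is not eventually that of a simple closed curve must drift arbitrarily close to the cusp. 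This is where, following Gorshkov, admitting \emph{all} ideal triangulations rather than only the Farey one collapses the symbolic-dynamics input to almost nothing.
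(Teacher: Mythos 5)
Your overall structure and main ingredients closely track the paper's proof: the (a)$\Rightarrow$(b) argument via Propositions~\ref{prop:arcs_geodesics}, \ref{prop:triangs}, and \ref{prop:crossing2} with $\delta=3$, the observation that the ``one (hence any) $(p_1,p_2)$'' freedom is an integer translation, and the derivation of (e) from the family of triangulations containing the fixed ideal arc, are all essentially the paper's.

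However, the crux of (c)$\Rightarrow$(a) is left open --- you flag this yourself --- and that is a genuine gap. The paper closes it cheaply, and the two difficulties you name (``make the steering flips precise'' and ``prove non-termination forces base weights to infinity'') collapse into one once the flip rule is chosen well. The rule is not ``flip toward the departing vertex'' but the following. Given a triangulation $T_n$ with edges labelled so that $a_n\le b_n\le c_n$, the cutting sequence of $\pi(g)$ hits all three edges infinitely often (since $\pi(g)$ is not simple closed and $g$ has irrational ends), so it contains a consecutive pattern $a_n\,b_n\,c_n$ or $b_n\,a_n\,c_n$; flip the \emph{middle} edge of that pattern. In the first case the labelled weights of $T_{n+1}$ are $(a_n,c_n,b_n')$ with $b_n'=(a_n^{2}+c_n^{2})/b_n$, and $a_n\ge 1$ together with $b_n\le c_n$ gives $a_n^{2}+c_n^{2}>b_nc_n$, hence $b_n'>c_n$; so the new triple is again sorted with strictly larger top entry, and the other pattern is symmetric. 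Since Markov numbers are positive integers, $c_n\to\infty$, and Propositions~\ref{prop:crossing1} and~\ref{prop:crossing2} then produce Ford circles within signed distance $-\frac{1}{2}\log\big(\frac{9}{4}-\frac{1}{c_n^{2}}\big)$ of $g$, decreasing to $-\log\frac{3}{2}$ and contradicting (c). One does still have to check inductively that $\pi(g)$ retains a pair of consecutive crossings of $a_{n+1}$, $b_{n+1}$ after the flip; that is a short geometric verification, not a symbolic-dynamics argument, which is exactly the collapse you allude to but must actually implement.

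A secondary issue: in (b)$\Rightarrow$(d) you invoke Proposition~\ref{prop:crossing1} to bound the distance from $g$ to a Ford circle at a vertex $v$ off the crossed strip, but that proposition controls geodesics that cross a given triangle, and $g$ never enters the star of such a $v$, so it does not apply directly. The paper uses a different device here: take a geodesic ray $\rho$ from $v$ to a point of $g$, note that $\pi(\rho)$ must cross $\pi(g)$ before reaching its endpoint, and use the intermediate lift of $\pi(g)$ that $\rho$ meets to obtain the lower bound.
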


\begin{proof}
  ``$\textit{(a)}\Rightarrow\textit{(b)}$'': If $\pi(g)$ is a
  simple closed geodesic, then there is a unique ideal arc $c$ not
  intersecting $\pi(g)$ (Proposition~\ref{prop:arcs_geodesics}
  (iv)). Pick an ideal triangulation $T$ of the modular torus that
  contains $c$, and let $a$ and $b$ be the other edges. By
  Proposition~\ref{prop:triangs}, $(a,b,c)$ is a Markov triple. (We
  use the same letters to denote both ideal arcs and their weights.)
  The geodesic $\pi(g)$ intersects each of the two triangles
  of $T$ in a geodesic segment bisecting the edges~$a$ and~$b$
  (Proposition~\ref{prop:arcs_geodesics}~(iii)).

  Now let $p_{1}$, $p_{2}$ be integers satisfying~\eqref{eq:p1p2} and
  consider the decorated ideal triangle in~$H^{2}$ with vertices
  \begin{equation}
    \label{eq:v1v2}
    v_{1}=\frac{p_{1}}{b},\quad
    v_{2}=\frac{p_{2}}{a},\quad
    v_{3}=\infty,
  \end{equation}
  and their respective Ford
  circles
  \begin{equation}
    h_{1}=h(p_{1},b),\quad h_{2}=h(p_{2},a),\quad h_{3}=h(1,0).
  \end{equation}
  Such integers $p_{1}$, $p_{2}$ exist because the numbers $a$, $b$,
  $c$ of a Markov triple are pairwise coprime. Moreover, this implies
  that the fractions in~\eqref{eq:v1v2} are reduced, and $v_{1}$ and
  $v_{2}$ are determined up to addition of a common integer. By
  Proposition~\ref{prop:dhh}, this decorated ideal triangle has edge
  weights
  \begin{equation}
    \label{eq:a1a2a3}
    a_{1}=a,\quad
    a_{2}=b,\quad 
    a_{3}=c
  \end{equation}
  (see Fig.~\ref{fig:crossing} for notation). 

  Conversely, every ideal triangle
  $\tilde{v}_{1}\tilde{v}_{2}\tilde{v}_{3}$ with
  $\tilde{v}_{3}=\infty$ and rational $\tilde{v}_{1}$,
  $\tilde{v}_{2}$, that is decorated with the respective Ford circles,
  has weights~\eqref{eq:a1a2a3}, and satisfies
  $\tilde{v}_{1}<\tilde{v}_{2}$ is obtained this way. (To get the
  triangles with $\tilde{v}_{1}>\tilde{v}_{2}$, change $c$ to $-c$ in
  equation~\eqref{eq:p1p2}.)  This implies that any lift of a triangle
  of $T$ to the hyperbolic plane is $\GLTZ$-related to
  $v_{1}v_{2}v_{3}$. Use Proposition~\ref{prop:crossing2} with
  $\delta=3$ to deduce that $g$ is $\GLTZ$-related to the geodesic
  ending in $x_{0}\pm r$.

  ``$\textit{(b)}\Rightarrow\textit{(d)}$'': Let $\hat{T}$ be the
  lift of the triangulation $T$ to $H^{2}$. The geodesic~$g$ crosses
  an infinite strip of triangles of $\hat{T}$. By
  Proposition~\ref{prop:crossing2}, the signed distance of $g$ and any
  Ford circle centered at a vertex incident with this strip is
  $-\log r$. We claim that the signed distance to any other Ford
  circle is larger. To see this, consider a vertex
  $v\in\Q\cup\{\infty\}$ that is not incident with the triangle strip,
  and let $\rho$ be a geodesic ray from $v$ to a point $p\in g$. Note
  that the projected ray $\pi(\rho)$ intersects $\pi(g)$ at least once
  before it ends in $\pi(p)$, and that the signed distance to the
  first intersection is at least $-\log r$.

  ``$\textit{(b)}\land\textit{(d)}\Rightarrow\textit{(c)}$'':
  This follows directly from
  $r=\sqrt{\frac{9}{4}-\frac{1}{c^{2}}}<\frac{3}{2}$.

  ``$\textit{(c)}\Rightarrow\textit{(a)}$'': We will show the
  contrapositive: If the geodesic $g$ does not project to a simple
  closed geodesic, then there is a Ford circle with signed distance
  smaller than $-\log\frac{3}{2}+\epsilon$, for every $\epsilon>0$.

  There is nothing to show if at least one end of $g$ is in
  $\Q\cup\{\infty\}$ because then the Ford circle at this end has
  signed distance $-\infty$. So assume $g$ does not project to a
  simple closed geodesic and both ends of~$g$ are irrational.

  We will recursively define a sequence $(T_{n})_{n\geq 0}$
  of ideal triangulations of the modular torus, with edges labeled
  $a_{n}$, $b_{n}$, $c_{n}$, such that the following holds:

  \begin{compactenum}[(1)]
  \item The geodesic $\pi(g)$ has at least one pair of consecutive
    intersections with the edges $a_{n}$, $b_{n}$.
  \item The edge weights, which we also denote by $a_{n}$, $b_{n}$,
    $c_{n}$, satisfy 
    \begin{equation*}
      a_{n}\leq b_{n}\leq c_{n},
    \end{equation*}
    so that $(a_{n},b_{n},c_{n})$ is a sorted Markov triple. 
  \item
    $
      c_{n+1}>c_{n}
    $
  \end{compactenum}

  This proves the claim, because Propositions~\ref{prop:crossing1}
  and~\ref{prop:crossing2} imply that for each~$n$, there is a
  horocycle with signed distance to $g$ less than 
  \smash{$
    -\frac{1}{2}\log\big(\frac{9}{4}-\frac{1}{c_{n}^{2}}\big),
  $}
  which tends to \smash{$-\log\frac{3}{2}$} from above as
  $n\rightarrow\infty$.

  To define the sequence $(T_{n})$, let $T_{0}$ be the triangulation
  with edge weights $(1,1,1)$, with edges labeled so that (1) holds.

  Suppose the triangulation $T_{n}$ with labeled edges is already
  defined for some $n\geq 0$. Define the labeled triangulation
  $T_{n+1}$ as follows. Since $\pi(g)$ is not a simple closed
  geodesic, it intersects all three edges. Because $g$ has
  an irrational end (in fact, both ends are assumed to be irrational),
  there are infinitely many edge intersections. Hence, there is pair of
  intersections with $a_{n}$ and $b_{n}$ next to an intersection with
  $c_{n}$. If the sequence of intersections is $a_{n}b_{n}c_{n}$, let
  $T_{n+1}$ be the triangulation with edges
  \begin{equation*}
    (a_{n+1},b_{n+1},c_{n+1})=(a_{n},c_{n},b_{n}'),
  \end{equation*}
  and if the sequence is $b_{n}a_{n}c_{n}$, let $T_{n+1}$ be the
  triangulation with 
  \begin{equation*}
    (a_{n+1},b_{n+1},c_{n+1})=(b_{n},c_{n},a'_{n}),
  \end{equation*}
  where $a_{n}'$ and $b_{n}'$ are the ideal arcs obtained by flipping
  the edges $a_{n}$ or $b_{n}$ in $T_{n}$, respectively. By induction
  on $n$, one sees that (1), (2), (3) are satisfied for all $n\geq 0$.

  ``$\textit{(a)}\land\textit{(b)}\Rightarrow\textit{(e)}$'': The
  Markov triples $(a,b,c)$ verifying (b) are precisely the triples of
  edge weights of ideal triangulations containing the ideal arc $c$
  not intersecting $\pi(g)$. The triangulations containing the ideal
  arc $c$ form a doubly infinite sequence in which neighbors are
  related by a single edge flip fixing $c$. In this sequence, there is
  a unique triangulation for which the weight $c$ is largest.
\end{proof}

\begin{proposition}
  \label{prop:markov_dioph_geo}
  Let $g$ be a complete geodesic in the hyperbolic plane, and let
  $X\subset\R\setminus\Q$ be the set of ends of lifts of simple closed
  geodesics in the modular torus. Then the following two statements
  are equivalent:
  
  \begin{compactenum}[(i)]
  \item The ends of $g$ are contained in $\Q\cup\{\infty\}\cup X$.
  \item For some $M>-\log\frac{3}{2}$ there are only finitely many
    (possibly zero) Ford circles~$h$ with signed distance $d(g,h)<M$.
  \end{compactenum}
\end{proposition}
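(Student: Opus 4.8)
The plan is to prove the two implications separately, splitting according to the nature of the two ends of $g$ and reducing everything to Propositions~\ref{prop:arcs_geodesics}, \ref{prop:crossing1}, \ref{prop:crossing2} and~\ref{prop:markov_forms_geo}.

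For the implication (i)$\Rightarrow$(ii) I would treat the two ends of $g$ one at a time. If an end $\omega$ lies in $\Q\cup\{\infty\}$, a suitable terminal ray $\rho$ of $g$ toward $\omega$ lies entirely inside the open horoball bounded by the Ford circle $h_\omega$; since that horoball is disjoint from, or tangent to, every other Ford circle, $d(\rho,h)$ is bounded below by a fixed positive constant for every Ford circle $h$, and over all of $g$ only $h_\omega$ itself has signed distance $-\infty$. If an end $\xi$ lies in $X$, then $g$ is asymptotic, along a terminal ray $\rho$, to the lift $\tilde\gamma$ of a simple closed geodesic, and by Proposition~\ref{prop:markov_forms_geo}(d) one has $d(\tilde\gamma,h)\geq-\log r>-\log\tfrac32$ for every Ford circle $h$, with equality for infinitely many of them. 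A continuity estimate for the signed distance under Hausdorff approximation of geodesics then gives, for $\rho$ chosen far enough toward $\xi$, that $d(\rho,h)\geq-\log r-\varepsilon$ for all Ford circles $h$ that lie within bounded distance of the portion of $\tilde\gamma$ approximated by $\rho$. Choosing $M$ to be the minimum over the ends --- of $-\log r$ for an $X$-end and of a fixed positive number for a rational end --- decreased by a small $\delta>0$, one has $M>-\log\tfrac32$, and the only Ford circles with $d(g,h)<M$ are the finitely many ones whose signed distance to the compact middle segment of $g$ is $<M$, together with the Ford circles at the rational ends; this is statement (ii).

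For (ii)$\Rightarrow$(i) I would argue the contrapositive. Suppose an end $\omega$ of $g$ is irrational and not in $X$. Then $\pi(g)$ is not a simple closed geodesic (otherwise $g$ would be one of its lifts and $\omega\in X$), and since a geodesic in the modular torus missing an ideal arc either has an end at the cusp or equals or spirals onto the disjoint simple closed geodesic (Proposition~\ref{prop:arcs_geodesics}), the curve $\pi(g)$ must cross every edge of every ideal triangulation. I would then re-run, near the end $\omega$, the recursive construction from the proof of ``$\text{(c)}\Rightarrow\text{(a)}$'' in Proposition~\ref{prop:markov_forms_geo}: produce sorted Markov triples $(a_n,b_n,c_n)$ with $c_{n+1}>c_n$ such that $\pi(g)$ has a pair of consecutive crossings with $a_n$ and $b_n$, now insisting that these crossings be chosen progressively closer to $\omega$ along $g$. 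Since the base $c_n$ is the largest of the three weights, the obtuse alternative in Proposition~\ref{prop:crossing1} cannot occur; and since $\pi(g)$ is not a simple closed geodesic it is not the midpoint-bisector of the triangle in question, so by Propositions~\ref{prop:crossing1} and~\ref{prop:crossing2} one of the three Ford circles $h_n$ at the vertices of a lift of that triangle satisfies $d(g,h_n)<-\tfrac12\log\bigl(\tfrac94-\tfrac1{c_n^2}\bigr)$. Because the triangles are chosen ever closer to $\omega$, the centers of the $h_n$ tend to $\omega$, so the $h_n$ are eventually pairwise distinct; as $-\tfrac12\log(\tfrac94-\tfrac1{c_n^2})\to-\log\tfrac32$, for every $M>-\log\tfrac32$ there are infinitely many distinct Ford circles with $d(g,h)<M$, which is the negation of (ii).

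The step I expect to be the main obstacle is the case of (i)$\Rightarrow$(ii) in which both ends of $g$ lie in $X$ but $g$ is not itself a lift of a simple closed geodesic: one has to make the continuity estimate mentioned above precise and uniform --- controlling the rate at which $g$ approaches its two asymptotic lifts $\tilde\gamma_1$ and $\tilde\gamma_2$ --- in order to be sure that only finitely many Ford circles dip below the level $M$ just above $-\log\tfrac32$. The analogous bookkeeping in (ii)$\Rightarrow$(i), namely that the recursively produced Ford circles can be taken distinct, is comparatively routine once the crossings are pushed out toward the irrational end.
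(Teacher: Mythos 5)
Your proof follows the paper's exactly: split (i)$\Rightarrow$(ii) by end type, using a terminal ray inside the Ford horoball for a rational end and, for an end in $X$, the convergence of $g$ to an asymptotic lift $\tilde{g}$ together with the bound $\inf_h d(h,\tilde{g}) = -\log r > -\log\tfrac{3}{2}$ from Proposition~\ref{prop:markov_forms_geo}, then prove (ii)$\Rightarrow$(i) contrapositively by re-running the flip recursion from the proof of Proposition~\ref{prop:markov_forms_geo}, ``$\text{(c)}\Rightarrow\text{(a)}$''. Your explicit requirement that the consecutive crossings with $a_n$, $b_n$ be chosen progressively closer to the irrational end $\omega$ (so that the vertices of the lifted triangles, hence the centers of the $h_n$, tend to $\omega\notin\Q\cup\{\infty\}$ and the $h_n$ contain infinitely many distinct Ford circles) is a useful detail that the paper leaves implicit in its ``adapt the argument'' directive, and it is genuinely needed here because, unlike in Proposition~\ref{prop:markov_forms_geo}\,(c), statement (ii) is a cardinality statement rather than an infimum statement.
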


\begin{proof}
  ``$\textit{(i)}\Rightarrow\textit{(ii)}$'': Consider the ends
  $x_{k}$ of $g$, $k\in\{1,2\}$. 

  If $x_{k}\in\Q\cup\{\infty\}$, then $g$ contains a ray $\rho_{k}$
  that is contained inside the Ford circle at $x_{k}$. In this case,
  let $M_{k}=0$.

  If $x_{k}\in X$, then $x_{k}$ is also the end of a geodesic
  $\tilde{g}$ that projects to a simple closed geodesic in the modular
  torus. By Proposition~\ref{prop:markov_forms_geo},
  $\inf d(h,\tilde{g})>-\log\frac{3}{2}$, where the infimum is taken
  over all Ford circles $h$. Since $g$ and $\tilde{g}$ converge
  at~$x_{k}$, there is a constant $M_{k}>-\log\frac{3}{2}$ and a ray
  $\rho_{k}$ contained in $g$ and ending in $x_{k}$ such that
  $d(h,\rho_{k})>M_{k}$ for all Ford circles $h$.

  The part of $g$ not contained in $\rho_{1}$ or $\rho_{2}$ is empty
  or of finite length, so it can intersect the interiors of at most
  finitely many Ford circles. This implies (ii) with
  $M=\min(M_{1},M_{2})$.

  ``$\textit{(ii)}\Rightarrow\textit{(i)}$'': To show the
  contrapositive, assume (i) is false: At least one end of $g$ is
  irrational but not the end of a lift of a simple closed geodesic in
  the modular torus. This implies that the projection $\pi(g)$
  intersects every ideal arc in the modular torus infinitely many
  times. Adapt the argument for the implication
  ``$\text{(c)}\Rightarrow\text{(a)}$'' in the proof of
  Proposition~\ref{prop:markov_forms_geo} to show that there is a
  sequence of horocycles $(h_{n})$ and an increasing sequence of
  Markov numbers $(c_{n})$ such that
  $d(g,h_{n})<-\frac{1}{2}\log\big(\frac{9}{4}-\frac{1}{c_{n}^{2}}\big)$.
  This implies that (ii) is false.
\end{proof}

\section{Dictionary: point --- definite form. Spectrum, classification
  of definite forms, and the Farey tessellation revisited}
\label{sec:point}

This section is about the hyperbolic geometry of definite binary
quadratic forms. Its purpose is to complete the dictionary and provide
a broader perspective. This section is not needed for the proof of
Markov's theorem.

If the binary quadratic form~\eqref{eq:f} with real coefficients is
positive or negative definite, then the polynomial~$f(x,1)$ has two
complex conjugate roots. Let $z(f)$ denote the root in the upper
half-plane, i.e.,
\begin{equation*}
  z(f)=\frac{-B+i\sqrt{\det f}}{A}\,.
\end{equation*}
This defines a map $f\mapsto z(f)$ from the space of definite forms to
the hyperbolic plane $H^{2}$. It is surjective and many-to-one (any
non-zero multiple of a form is mapped to the same point) and
equivariant with respect to the left $\GLTR$-actions.

The \emph{signed distance} of a horocycle and a point in the
hyperbolic plane is defined in the obvious way (positive for points
outside, negative for points inside the horocycle). One obtains the
following proposition in the same way as the corresponding statement
about geodesics (Proposition~\ref{prop:dhgf}):

\begin{proposition}
  \label{prop:dhzf}
  The signed distance of the horocycle $h(p,q)$ and the
  point $z(f)\in H^{2}$ is
  \begin{equation}
    \label{eq:dhzf}
    d\big(h(p,q),z(f)\big)=\log\frac{|f(p,q)|}{\sqrt{\det f}}\,.
  \end{equation}
\end{proposition}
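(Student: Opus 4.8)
The plan is to follow the proof of Proposition~\ref{prop:dhgf} almost line for line, the only simplification being that a definite form never degenerates: its leading coefficient $A$ is automatically nonzero (a form with $A=0$ vanishes at $(1,0)$), so the ``vertical geodesic'' special case has no analogue and the argument is a touch shorter. I would also note at the outset that replacing $f$ by a nonzero scalar multiple changes neither side of~\eqref{eq:dhzf}, since $|f(p,q)|$ and $\sqrt{\det f}$ both scale by the same factor; in particular $z(f)$, and hence the left-hand side, is well defined, and we may as well assume $f$ is positive definite, so that $A>0$.

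First I would treat a horizontal horocycle, $q=0$. Then $h(p,0)$ is the horizontal line at height $p^{2}$, bounding the horoball $\{\im z>p^{2}\}$, while $\im z(f)=\sqrt{\det f}/A$. The hyperbolic distance between two points of a common vertical geodesic at heights $y_{0}$ and $y_{1}$ is $|\log(y_{0}/y_{1})|$, and by definition the signed distance of $z(f)$ to $h(p,0)$ is this distance, taken negative precisely when $z(f)$ lies in the horoball, i.e.\ when $\im z(f)>p^{2}$. In both cases it therefore equals $\log\bigl(p^{2}/\im z(f)\bigr)=\log\bigl(Ap^{2}/\sqrt{\det f}\bigr)=\log\bigl(|f(p,0)|/\sqrt{\det f}\bigr)$, which is~\eqref{eq:dhzf} in this case.

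For the general case I would reduce to the horizontal one by equivariance, exactly as in the proof of Proposition~\ref{prop:dhgf}. Pick $A\in\GLTR$ with $|\det A|=1$ and $A\bigl(\begin{smallmatrix}p\\q\end{smallmatrix}\bigr)=\bigl(\begin{smallmatrix}\tilde p\\0\end{smallmatrix}\bigr)$; then $\tilde p\neq 0$ and $f\circ A^{-1}$ is again definite. Since $M_{A}$ is an isometry carrying horocycles to horocycles and the corresponding horoballs to horoballs, it preserves signed distances of horocycles and points; combined with Proposition~\ref{prop:equivariant} and the $\GLTR$-equivariance of $f\mapsto z(f)$ noted just above, this gives $d\bigl(h(p,q),z(f)\bigr)=d\bigl(M_{A}h(p,q),M_{A}z(f)\bigr)=d\bigl(h(\tilde p,0),z(f\circ A^{-1})\bigr)$. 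Applying the horizontal case together with $\det(f\circ A^{-1})=(\det A)^{-2}\det f=\det f$ then yields $d\bigl(h(p,q),z(f)\bigr)=\log\frac{|(f\circ A^{-1})(\tilde p,0)|}{\sqrt{\det(f\circ A^{-1})}}=\log\frac{|f(p,q)|}{\sqrt{\det f}}$, as claimed.

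The only thing needing care — and the sole, mild, obstacle — is the sign bookkeeping hidden in the word ``signed''. One must check that $M_{A}$ sends the horoball bounded by $h(\tilde p,0)$ onto the horoball bounded by $h(p,q)$, not merely the horocycle onto the horocycle, so that the \emph{sign} of the signed distance is preserved; this holds for every $A$, including the orientation-reversing ones with $\det A<0$ for which $M_{A}$ involves $\bar z$. One should also confirm that $z(f\circ A^{-1})$ really is the upper-half-plane root used to define $f\mapsto z(f)$, which it is since $f\circ A^{-1}$ stays definite. None of this goes beyond the bookkeeping already carried out for Proposition~\ref{prop:dhgf}.
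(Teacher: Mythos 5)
Your proof is correct and matches what the paper intends; the paper gives no separate argument for Proposition~\ref{prop:dhzf} but simply says it is obtained ``in the same way as'' Proposition~\ref{prop:dhgf}, which is exactly your reduction to the horizontal horocycle via the $\GLTR$-equivariance of $(p,q)\mapsto h(p,q)$ and $f\mapsto z(f)$. The explicit base-case computation $\im z(f)=\sqrt{\det f}/A$ and the sign bookkeeping you spell out are precisely what the paper leaves implicit.
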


This provides a geometric explanation for the different behavior of
definite binary quadratic forms with respect to their minima on
$\Z^{2}$: 

For all definite forms $f$, the infimum~\eqref{eq:mf} is attained for
some $(p,q)\in\Z^{2}$ and satisfies $M(f)\leq\frac{2}{\sqrt{3}}$. All
forms equivalent to $p^{2}-pq+q^{2}$, and only those, satisfy
$M(f)=\frac{2}{\sqrt{3}}$. But for every positive number
$m<\frac{2}{\sqrt{3}}$, there are infinitely many equivalence classes
of definite forms with $M(f)=m$.

Algorithms to determine the minimum $M(f)$ of a definite quadratic
form $f$ are based on the reduction theory for quadratic forms. (The
theory of equivalence and reduction of binary quadratic forms is
usually developed for integer forms, but much of it carries over to
forms with real coefficients.) The reduction algorithm described by
Conway~\cite{conway97} has a particularly nice geometric
interpretation based on the following observation:

For a point in the hyperbolic plane, the three nearest Ford circles
(in the sense of signed distance) are the Ford circles at the vertices
of the Farey triangle containing the point. (If the point lies on an
edge of the Farey tessellation, the third nearest Ford circle is not
unique.)
   
\paragraph{Acknowledgement.} I would like to thank Oliver Pretzel, who
gave me a first glimpse of this subject some 25 years ago, and
Alexander Veselov, who made me look again. Last but not least, I would
like to thank the anonymous referees for their insightful comments.

This research was supported by DFG SFB/TR 109 ``Discretization in Geometry
and Dynamics''.

\begingroup
\small
\bibliographystyle{abbrv}
\bibliography{markov}
\endgroup

\vspace{3\baselineskip}\noindent%
Boris Springborn\\
Technische Universit{\"a}t Berlin\\
Institut f{\"u}r Mathematik, MA 8-3\\
Str.~des 17.~Juni 136\\
10623 Berlin, Germany\\[\baselineskip]
\nolinkurl{boris.springborn@tu-berlin.de}

\end{document}